\documentclass[12pt,reqno]{amsart}
\usepackage{mathrsfs}
\usepackage{upgreek}
\usepackage{amscd}
\usepackage{amssymb}
\usepackage{amsmath, amsthm, graphicx}
\usepackage{enumitem}
\usepackage[all]{xy}
\usepackage{cite}
\usepackage{tikz}
\usepackage{color}
\usepackage{tikz}
\usepackage{tikz-cd}
\usepackage{hyperref}
\usepackage[english]{babel}
\usepackage[margin=1.0in]{geometry}

\DeclareMathOperator{\Aut}{Aut}
\DeclareMathOperator{\Cont}{Cont}
\DeclareMathOperator{\Coeff}{Coeff}

\newcommand{\Gammaloop}{\Gamma^\mathrm{loop}}

\theoremstyle{plain}
\newtheorem{theorem}{Theorem}
\newtheorem{lemma}[theorem]{Lemma}
\newtheorem{proposition}[theorem]{Proposition}
\newtheorem{corollary}[theorem]{Corollary}
\newtheorem{example}[theorem]{Example}

\newtheorem{conjecture}[theorem]{Conjecture}

\theoremstyle{definition}

\theoremstyle{remark}
\newtheorem{remark}[theorem]{Remark}

\usepackage{color}

\setcounter{tocdepth}{1}

\begin{document}
\title{Structures in topological recursion relations}

\author[F.~Janda]{Felix Janda}
\address{Department of Mathematics, University of Illinois Urbana--Champaign, Urbana, IL 61801, USA}
\email{fjanda@illinois.edu}

\author[X.~Wang]{Xin Wang}
\address{School of Mathematics, Shandong University, Jinan, Shandong 250100, China}
\email{wangxin2015@sdu.edu.cn}

\begin{abstract}

  In this paper, we study the basic structures of degree-$g$ topological recursion relations on the moduli space of curves $\overline{\mathcal{M}}_{g,n}$: (i) The coefficient of the bouquet class on $\overline{\mathcal{M}}_{g,n}$, which gives the answer  to a conjecture of T.~Kimura and X.~Liu in \cite{kimura2006genus}; (ii) Linear relations among the coefficients of certain rational tails locus of $\overline{\mathcal{M}}_{g,n}$.
  Three applications of topological recursion relations will be discussed: (i) Coefficients of universal equations for Gromov--Witten invariants for any smooth projective variety; (ii) The coefficient of the bouquet class in the double ramification formula of the top Hodge class $\lambda_g$; (iii) A new recursive formula for computing the intersection numbers on the moduli space of stable curves.
\end{abstract}

\subjclass[2020]{Primary 14N35; Secondary 14N10}
\keywords{moduli space of curves, topological recursion relation, Gromov--Witten invariant}

\maketitle
\tableofcontents
\allowdisplaybreaks
\section{Introduction}
\subsection{Structures of topological recursion relations}
Let $\overline{\mathcal{M}}_{g,n}$ be the moduli space of stable genus-$g$ curves with $n$ marked points.
The tautological rings  $R^*(\overline{\mathcal{M}}_{g,n})$ may be defined simultaneously for all $g,n\geq0$ with $2g-2+n>0$, as the smallest system of subrings of  the Chow ring $A^*(\overline{\mathcal{M}}_{g,n})$ of the moduli spaces of curves closed under pullback via the forgetful morphisms
$\pi\colon \overline{\mathcal{M}}_{g,n+1} \rightarrow \overline{\mathcal{M}}_{g,n}$ and  the gluing morphisms
$\iota_{g_1,g_2}\colon \overline{\mathcal{M}}_{g_1,n_1+1} \times \overline{\mathcal{M}}_{g_2,n_2+1} \rightarrow \overline{\mathcal{M}}_{g_1+g_2, n_1+n_2},$ and
$\iota_{g-1}\colon \overline{\mathcal{M}}_{g,n+2} \rightarrow \overline{\mathcal{M}}_{g+1,n}$. 
This elegant definition is due to Faber and Pandharipande \cite{faber2005relative}, who also proved that $R^*(\overline{\mathcal{M}}_{g,n})$ admits an explicit set of additive generators which are the images of  {\it strata classes} in $\overline{\mathcal{M}
}_{g,n}$.
These strata classes are formed from three ingredients: $\psi$ classes, $\kappa$ classes, and generalized gluing maps corresponding to
stable graphs. The $\psi$ classes are
$\psi_i=c_1(s_i^*(\omega_{\pi})), \; i=1, \ldots, n$
and the $\kappa$ classes are
$\kappa_d = \pi_*(\psi_{n+1}^{d+1}), \; d \geq 0$, where $\pi\colon \overline{\mathcal{M}}_{g,n+1} \rightarrow \overline{\mathcal{M}}_{g,n}$ forgets the last marked point,  $s_i$ is the section of $\pi$ given by the $i$-th marked point and $\omega_{\pi}$ denotes the relative dualizing sheaf.

In the study of the tautological ring, one of the most important problems is to determine the {\it tautological relations}: the linear relations among the strata classes.
In this paper, we focus on linear equations only involving $\psi$ classes and boundary classes, which we refer to as {\it topological recursion relations}.

It is well known that such topological recursion relations produce universal equations for Gromov--Witten invariants.
Such equations play important role in studying properties for Gromov--Witten invariants and its associated integrable systems.
For example they play a crucial role in the study of the low genus Virasoro conjecture (cf.\ \cite{liu2002quantum}) and  can be used to analyze the second Poisson bracket of
Dubrovin-Zhang hierarchy (cf. \cite{liu2011new}, \cite{iglesias2022bi}).

The lower the degree of topological recusion relations, the more powerful are the corresponding universal equations.
From the results in \cite{ionel2002topological}, \cite{faber2005relative}, and \cite{graber2005relative}, we know
that for every $g$, there exist tautological relations that express any degree $g$ polynomial in $\psi$ classes on $\overline{\mathcal{M}}_{g,n}$ in terms of tautological classes supported on the boundary $\overline{\mathcal{M}}_{g,n} \setminus \mathcal{M}_{g,n}$.
Recently, in \cite{clader2023topological}, with E.~Clader and D.~Zakharov, we significantly strengthened this result, and proved the existence of topological recursion relations expressing any degree $g$ polynomial in $\psi$ classes on $\overline{\mathcal{M}}_{g,n}$ in terms of boundary classes.
The proof is constructive, and gives an algorithm to compute its coefficients.
Degree $g$ is the smallest for which such a set of relations can exist.

At this point, all explicit topological recursion relations have genus
$g \leq 4$.
Moreover while the genus $0$ WDVV equations are simple, starting from
Getzler's equation in genus $1$, the known topological recursion
relations are very complicated (cf.\ \cite{getzler1997intersection,getzler1998topological}, \cite{belorousski2000descendent}, \cite{kimura2006genus, kimura2015topological} and \cite{wang2020genus}).

In this paper, we study basic structures of degree-$g$ topological recursion relations in arbitrary genus.

Our first result concerns the coefficient of the \emph{bouquet class} on $\overline{\mathcal{M}}_{g,n}$ in topological recursion relations, that is the class $\left(\xi_{\Gammaloop_{0,g,[n]}}\right)_*(1)$, where $\Gammaloop_{0,g,[n]}$ is the stable graph with only one genus-0 vertex equipped with $g$ loops and all the markings.
\begin{theorem}\label{thm:coeff-bouquet}
For any non-negative integers $\{k_i\}_{i=1}^{n}$ satisfying  $\sum_{i=1}^{n}k_i=g\geq1$, there exists a topological recursion relation on $\overline{\mathcal{M}}_{g,n}$ of the form
\begin{align*}
\prod_{i=1}^{n}{\psi_i}^{k_i} =\frac{1}{8^g \prod_{i=1}^{n}(2k_i+1)!!}
\left(\xi_{\Gammaloop_{0,g,[n]}}\right)_*(1)+\cdots.
\end{align*}
The omitted terms in the above formula satisfy the following condition: there are no
$\kappa$ classes and no genus-$h$ vertices with monomial of $\psi$ classes of degree $\geq h+\delta_{h}^0$ for
$0\leq h\leq g$.
\end{theorem}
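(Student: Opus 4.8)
The plan is to \emph{exhibit} such a relation — no uniqueness of the bouquet coefficient is claimed, so it suffices to produce one — by running the constructive algorithm of \cite{clader2023topological} for topological recursion relations while carrying along the coefficient of $\big(\xi_{\Gammaloop_{0,g,[n]}}\big)_*(1)$ and checking that at the end it equals $\tfrac{1}{8^g\prod_i(2k_i+1)!!}$. The bookkeeping is organized as a simultaneous induction on $g$ and $n$. The base case is $g=1$: on $\overline{\mathcal{M}}_{1,1}$ one has the classical relation $\psi_1=\tfrac{1}{24}\big(\xi_{\Gammaloop_{0,1,[1]}}\big)_*(1)$, with no further terms, whose bouquet coefficient is already $\tfrac{1}{24}=\tfrac{1}{8\cdot 3!!}$.

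First I would reduce to the case $k_i\ge1$ for all $i$ (hence $n\le g$). If $k_j=0$, pull back a relation for $\prod_{i\ne j}\psi_i^{k_i}$ from $\overline{\mathcal{M}}_{g,n-1}$ along the morphism forgetting the $j$-th marking and correct the left-hand side by $\psi_i=\pi^*\psi_i+D_{i,j}$, where $D_{i,j}$ is the divisor with a rational tail carrying exactly the markings $i$ and $j$. One checks that $\pi^*\big(\xi_{\Gammaloop_{0,g,[n-1]}}\big)_*(1)$ equals $\big(\xi_{\Gammaloop_{0,g,[n]}}\big)_*(1)$ plus classes in which the new marking sits on a genuinely new genus-$0$ bubble; that no non-bouquet stratum of the relation produces the bouquet when the new marking is distributed over it; and that $D_{i,j}$, and all of its products with $\psi$ classes, retain a vertex of genus $g$ and so never degenerate to the bouquet. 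Hence both the bouquet coefficient and the ``no $\kappa$, bounded $\psi$-degree'' normalization survive, and we may assume every $k_i\ge1$.

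For the inductive step I would isolate, inside the construction of \cite{clader2023topological}, the basic move that produces a genus-$g$ relation for $\prod_j\psi_j^{k_j}$ from a genus-$(g-1)$ relation for $\psi_i^{k_i-1}\prod_{j\ne i}\psi_j^{k_j}$, raising one exponent (and the genus) by one, and show that it changes the bouquet coefficient by a factor $\tfrac{1}{8(2k_i+1)}$. Given this, unrolling the induction from the genus-$1$ seed multiplies out, marking by marking, to $\prod_i\frac{1}{8^{k_i}(2k_i+1)!!}=\frac{1}{8^g\prod_i(2k_i+1)!!}$. I expect the factor $\tfrac18$ to come from the genus-$1$ seed together with the branch-swap combinatorics of the new non-separating node carried by the extra loop of the bouquet, and the residual $\tfrac{1}{2k_i+1}=(2k_i-1)!!/(2k_i+1)!!$ to come from the genus-$0$ string/WDVV manipulations that redistribute the extra power of $\psi_i$ over the boundary of $\overline{\mathcal{M}}_{0,m}$ once two more half-edges are created on the rational vertex of the bouquet.

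The hard part, and the real content of the argument, will be to control every route by which the bouquet class can be fed into the relation, so that it emerges with precisely this coefficient. Besides checking (as above) that no stratum other than the bouquet maps onto the bouquet under the basic move, the delicate point is the reduction of the relation to the normal form of the statement: eliminating $\kappa$ classes via $\kappa_a=\pi_*(\psi^{a+1})$, and lowering the $\psi$-degree on any vertex of genus $h$ below $h+\delta_h^0$ by applying a lower-genus topological recursion relation on that vertex. The latter can genuinely recreate the bouquet — a term with a genus-$h$ vertex carrying $g-h$ loops and a $\psi$-monomial of degree $\ge h$ collapses, once its $\psi$'s are reduced and its genus is split into loops, to a single genus-$0$ vertex with $g$ loops — so these sub-reductions feed back into the bouquet coefficient. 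I would handle this by running the whole induction on \emph{fully reduced} relations and invoking the already-established cases of the theorem (smaller $g$, or smaller $n$, which is exactly why the induction is on the pair $(g,n)$) for the recursive sub-reductions, so that each feedback contribution is itself governed by the same closed formula; the remaining task is then the purely combinatorial verification that the leading term together with all feedback still sums to $\frac{1}{8^g\prod_{i=1}^n(2k_i+1)!!}$.
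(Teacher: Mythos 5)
There is a genuine gap, and it sits exactly where the quantitative content of the theorem lies. Your inductive step rests on a ``basic move'' inside the algorithm of \cite{clader2023topological} that is supposed to pass from a genus-$(g-1)$ relation for $\psi_i^{k_i-1}\prod_{j\neq i}\psi_j^{k_j}$ to a genus-$g$ relation for $\prod_j\psi_j^{k_j}$ and to multiply the bouquet coefficient by exactly $\tfrac{1}{8(2k_i+1)}$. No such genus-raising move exists in that algorithm: the genus-$g$ relations there are extracted directly from Pixton's degree-$(g+1)$ classes $\mathcal D^{g+1}_{g,N}$ in genus $g$ (via the combinations \eqref{eq:cancellation}), and the recursion is over the partial order on $\psi$-monomials at fixed $g$, not over the genus. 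You neither construct the move nor prove its effect on the bouquet coefficient; the factors $\tfrac18$ and $\tfrac{1}{2k_i+1}$ are only ``expected'' from heuristic sources. Likewise, the feedback problem you correctly identify --- that reducing a genus-$h$ vertex carrying $g-h$ loops can recreate the bouquet --- is deferred to a ``purely combinatorial verification'' that is never performed, and the induction hypothesis you propose to invoke (the theorem itself for smaller $(g,n)$) only controls the bouquet coefficient of the sub-relations, not the loop-carrying strata and automorphism factors through which these feedback contributions actually arise. So the proposal is a strategy outline whose two load-bearing steps are asserted rather than proved.

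For comparison, the paper closes exactly these gaps by proving a single invariance statement rather than tracking routes one by one: for every relation produced by the algorithm (and for gluing push-forwards of lower-genus ones), the bouquet-type coefficients satisfy the linear identity $\sum_{h=0}^g 8^{-h}\,\mathcal F\bigl(c_{\Gammaloop_{h,g-h,[n]}}\bigr)=0$, where $\mathcal F(\prod\psi_i^{k_i})=\prod(2k_i+1)!!^{-1}$ (Theorem~\ref{thm:coeff-bouquet-general}). Because the normal form of Corollary~\ref{cor:deg-d-TRR-uniform} kills all intermediate bouquet-type graphs by degree reasons, this identity immediately pins the coefficient of $\left(\xi_{\Gammaloop_{0,g,[n]}}\right)_*(1)$ to $8^{-g}\prod_i(2k_i+1)!!^{-1}$, and it also absorbs all the ``feedback'' you worry about. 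Its proof is the explicit computation you are missing: closed formulas for the contributions of $\Gamma_{g,[n]}$ and $\Gammaloop_{g-1,1,[n]}$ to $\pi_*\Omega_{g,M}$ (Propositions~\ref{prop:omega_g,m-formula} and \ref{prop:omega_g,m-formula-pi-pushforward}), the evaluation of $\Coeff_{r^0}$ of the mod-$r$ weighted sums (Lemma~\ref{lema:lim-w1-w2}, which is where the $8$ really comes from), and the $\sinh/\cosh$ cancellation of Proposition~\ref{prop:1-loop-rel-M}. If you want to salvage your induction, you would at minimum have to formulate and prove an analogous invariant that is stable under your reduction steps; without it, the claimed per-step factor $\tfrac{1}{8(2k_i+1)}$ is unsupported.
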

Theorem~\ref{thm:coeff-bouquet} gives an affirmative answer to a conjecture about the coefficients  of the bouquet class in topological recursion relations proposed by T.~Kimura and X.~Liu in \cite[p.~656]{kimura2006genus}. In fact, the conjecture about the coefficients  of the bouquet class for $n=1$ was proposed  in \cite{kimura2006genus} and later  generalized to arbitrary $n$ in \cite{liu-private}.

It is also very interesting to compute the coefficients of topological recursion relations corresponding to strata of curves with rational tails.
In general, this is complicated since there are many such coefficients.
However, we do have the following linear relations among these coefficients:
\begin{theorem}\label{thm:trr-g-2}
For any non-negative integers $\{k_i\}_{i=1}^{n}$ satisfying $\sum_{i=1}^{n}k_i=g\geq1$, there exists a topological recursion relation on $\overline{\mathcal{M}}_{g,n}$ of the form\footnote{We use the standard convention $(-1)!!=1$.}
\begin{align*}
  \prod_{i=1}^{n}\psi_i^{k_i}=a_0\xi_{*}(\psi_{\bullet}^{g-1})+\sum_{i<j}a_{ij}\xi_{ij*}(\psi^{g-2}_{\bullet})+\cdots,
\end{align*}
where $a_0$ and the $a_{ij}$ are rational coefficients, and where
\begin{align*}
a_0+\sum_{1\leq i<j\leq n}a_{ij}
=\begin{cases}
1 &g=1, n \ge 2,\\
\frac{(2g-1)!!}{(2k_1-1)!!(2k_2-1)!!}&g\geq2, n=2,\\
0&g\geq2, n\geq3,
\end{cases},
\end{align*}
where $\xi:\overline{\mathcal{M}}_{g,\{\bullet\}}\times\overline{\mathcal{M}}_{0,\bar{\bullet}\cup\{1,\dots,n\}}\rightarrow \overline{\mathcal{M}}_{g,n}$  and $\xi_{ij}:\overline{\mathcal{M}}_{g,\{\bullet\}}\times\overline{\mathcal{M}}_{0,\bar{\bullet}\cup\{1,\dots,\hat{i},\dots,\hat{j},\dots,n\}\cup\circ}\times\overline{\mathcal{M}}_{0,\{\bar{\circ},i,j\}}\rightarrow \overline{\mathcal{M}}_{g,n}$ are the gluing maps.\footnote{Here we set $a_0$ or $a_{ij}$ to be zero if $\xi$ or $\xi_{ij}$ is not defined.}
The omitted terms consist of other boundary classes with no $\kappa$ classes and no genus-0 vertex with $\psi$ classes.
In particular, in the case $n = 2$, the coefficient corresponding to the boundary stratum of curves with rational tails on $\overline{\mathcal{M}}_{g,2}$ is given by
\begin{align*}
a_0=\frac{(2g-1)!!}{(2k_1-1)!!(2k_2-1)!!},\quad g\geq1.
\end{align*}
\end{theorem}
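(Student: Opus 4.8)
The plan is to first rewrite the relation in a convenient normal form, then pin down the distinguished coefficients by integrating against $\lambda_g\lambda_{g-1}$ when $n=2$, and finally propagate the answer to all $n$ by pulling back along forgetful morphisms. By the existence theorem of \cite{clader2023topological} there is a topological recursion relation on $\overline{\mathcal{M}}_{g,n}$ expressing $\prod_i\psi_i^{k_i}$ in terms of boundary classes, and by the normalization procedure (the same one behind Theorem~\ref{thm:coeff-bouquet}, eliminating $\kappa$-classes via $\pi_*\psi^{a+1}=\kappa_a$ and genus-$0$ $\psi$-classes via the comparison of cotangent lines under stabilization, at the cost of further boundary terms) one may assume it has the stated shape; then the coefficients $a_0$ of $\xi_*(\psi_{\bullet}^{g-1})$ and $a_{ij}$ of $\xi_{ij*}(\psi_{\bullet}^{g-2})$ are defined and one must compute $a_0+\sum_{i<j}a_{ij}$.

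For $n=2$ the degrees match, $3g-3+2=g+(2g-1)$, so I would multiply the relation by $\lambda_g\lambda_{g-1}$ and integrate over $\overline{\mathcal{M}}_{g,2}$. Since $\lambda_g\lambda_{g-1}$ restricts to zero outside the rational tails locus, and since for a rational tails stable graph $\Gamma$ with unique positive-genus vertex $v_0$ the projection formula gives $\int_{\overline{\mathcal{M}}_{g,2}}\xi_{\Gamma*}(\alpha)\lambda_g\lambda_{g-1}=\bigl(\int_{\overline{\mathcal{M}}_{g,n_{v_0}}}\alpha_{v_0}\lambda_g\lambda_{g-1}\bigr)\prod_{v\neq v_0}\int_{\overline{\mathcal{M}}_{0,n_v}}\alpha_v$, the absence of genus-$0$ $\psi$-classes forces each genus-$0$ vertex to be $\overline{\mathcal{M}}_{0,3}$, and a dimension count then forces $v_0=\overline{\mathcal{M}}_{g,1}$ decorated by $\psi_{\bullet}^{g-1}$ — which happens only for the graph of $\xi$. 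Hence every boundary term but $a_0\xi_*(\psi_\bullet^{g-1})$ integrates to zero and
\[
\int_{\overline{\mathcal{M}}_{g,2}}\psi_1^{k_1}\psi_2^{k_2}\lambda_g\lambda_{g-1}=a_0\int_{\overline{\mathcal{M}}_{g,1}}\psi_{\bullet}^{g-1}\lambda_g\lambda_{g-1}.
\]
Both sides are instances of the (now proven) Faber evaluation $\int_{\overline{\mathcal{M}}_{g,m}}\psi_1^{d_1}\cdots\psi_m^{d_m}\lambda_g\lambda_{g-1}=\frac{(2g-3+m)!\,|B_{2g}|}{2^{2g-1}(2g)!\,\prod_j(2d_j-1)!!}$, so taking the ratio yields $a_0=\frac{(2g-1)!!}{(2k_1-1)!!(2k_2-1)!!}$; as $\xi_{12}$ is undefined for $n=2$ this is also $a_0+\sum a_{ij}$ (and equals $1$ for $g=1$).

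For general $n$ I would induct using $\pi\colon\overline{\mathcal{M}}_{g,n}\to\overline{\mathcal{M}}_{g,n-1}$, with base cases $n=2$ and, for $g=1$, $\psi_1=\tfrac1{24}\delta_{\mathrm{irr}}$ on $\overline{\mathcal{M}}_{1,1}$, treating first monomials with some $k_n=0$. On the left, $\psi_i=\pi^*\psi_i+D_{i,n}$, $\psi_iD_{i,n}=0$, $D_{i,n}D_{j,n}=0$ $(i\ne j)$ and $D_{i,n}^2=-(j_{D_{i,n}})_*\psi_i$ (under $D_{i,n}\cong\overline{\mathcal{M}}_{g,n-1}$) give $\prod_{i<n}\psi_i^{k_i}=\pi^*(\prod_{i<n}\psi_i^{k_i})+\sum_{k_i\ge1}(j_{D_{i,n}})_*(\psi_i^{k_i-1}\prod_{l\ne i}\psi_l^{k_l})$, and for $n\ge3$ the divisors $D_{i,n}$ are neither of type $\xi$ nor $\xi_{ij}$, so the last group is of "other" type. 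On the right, I would analyze $\pi^*\xi'_*(\psi_\bullet^{g-1})$ and $\pi^*\xi'_{ij*}(\psi_\bullet^{g-2})$ by base change along the universal curve: each splits as the corresponding class on $\overline{\mathcal{M}}_{g,n}$ (node-$\psi$ unchanged) plus pushforwards from the component where the new marking lands on the genus-$g$ piece, on which the node-$\psi$ pulls back as $\psi_{\bullet}-D_{\bullet,n}$; expanding with $\psi_{\bullet}D_{\bullet,n}=0$ produces exactly one extra term $-\xi_{\bullet n*}(\psi_{\bullet}^{g-2})$ precisely when $g\ge2$ and $n=3$, and only "other" terms otherwise. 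The bookkeeping then gives $a_0^{(n)}=a_0^{(n-1)}$, $a_{ij}^{(n)}=a_{ij}^{(n-1)}$ for $i,j\le n-1$, and (for $g\ge2$) $a_{\cdot n}^{(n)}=-a_0^{(2)}\delta_{n,3}$, so the sum telescopes: it stays $\tfrac{(2g-1)!!}{(2k_1-1)!!(2k_2-1)!!}$ until $n=2$, collapses to $0$ at $n=3$, and remains $0$; for $g=1$ no correction appears and it stays $1$. The monomials with all $k_i\ge1$ (which force $n\le g$ and are not reached by this pullback) I would handle by a parallel argument: restrict the relation along $\xi$ itself, observe $\xi^*(\prod_i\psi_i^{k_i})=0$ on $\overline{\mathcal{M}}_{g,1}\times\overline{\mathcal{M}}_{0,n+1}$ since $g>n-2$, and integrate the resulting identity against $\lambda_g\lambda_{g-1}$ times a degree-$(n-3)$ class on $\overline{\mathcal{M}}_{0,n+1}$ chosen so that the contributions of $\xi$, of every $\xi_{ij}$, and of the "other" terms become proportional, which forces the sum to vanish.

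The main obstacle is the combinatorial control in the general-$n$ step: one must verify that under \emph{both} the left-hand $\psi$-comparison and the right-hand base-change every boundary contribution other than the explicitly named ones stays of "other" type (no $\kappa$, no genus-$0$ $\psi$, and not of shape $\xi$ or $\xi_{ij}$), that all relevant gluing maps are isomorphisms onto their strata so that no multiplicities intrude, and, crucially, that the single sign in $D_{\bullet,n}^2=-(j_{D_{\bullet,n}})_*\psi_{\bullet}$ is exactly the one producing the cancellation $a_0^{(2)}+a_{12}^{(3)}=0$. Exhibiting the auxiliary genus-$0$ class needed for the all-positive case is then a concrete but delicate computation in $R^*(\overline{\mathcal{M}}_{0,n+1})$.
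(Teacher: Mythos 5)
Your $n=2$ argument is exactly the paper's: multiply by $\lambda_g\lambda_{g-1}$, use that this class kills everything outside rational tails, and evaluate both sides by the Faber formula \eqref{eqn:faber-int-conj-hodge}. For $n\geq 3$, however, the paper does not induct on $n$ at all: starting from the relation of Corollary~\ref{cor:deg-d-TRR-uniform}, it multiplies by $\lambda_g\lambda_{g-1}\cdot\xi_{*}(\psi_{\bar{\bullet}}^{\,n-3})$ and integrates over $\overline{\mathcal{M}}_{g,n}$. The left side vanishes for degree reasons, the excess-intersection formula \eqref{eqn:stra-prod} gives
\begin{equation*}
\xi_{*}(\psi_{\bullet}^{g-1})\cdot\xi_{*}(\psi_{\bar{\bullet}}^{\,n-3})=\xi_{*}\bigl(\psi_{\bullet}^{g-1}\psi_{\bar{\bullet}}^{\,n-3}(-\psi_{\bullet}-\psi_{\bar{\bullet}})\bigr),
\end{equation*}
and similarly for each $\xi_{ij*}(\psi_{\bullet}^{g-2})$, so that every one of these terms integrates against $\lambda_g\lambda_{g-1}$ to the same number $-\frac{(2g-2)!\,|B_{2g}|}{2^{2g-1}(2g)!(2g-3)!!}$, while all remaining terms die; this yields $a_0+\sum_{i<j}a_{ij}=0$ in one step, uniformly in the monomial, with no case distinction according to whether some $k_i$ vanish.

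This is precisely the ``delicate computation'' you postpone in your all-$k_i\geq 1$ case: the auxiliary degree-$(n-3)$ class is simply $\psi_{\bar{\bullet}}^{\,n-3}$ on the genus-$0$ factor, and as written that case of your proposal is only a plan, not a proof — yet carrying it out would have subsumed your entire forgetful-map induction, including the monomials with some $k_i=0$. The induction itself is workable but heavier than needed, and it contains a bookkeeping slip: when you pull back $a_0^{(2)}\xi'_{*}(\psi_{\bullet}^{g-1})$ along the map forgetting the third point, the correction term is supported on the chain whose middle genus-$0$ vertex carries the \emph{new} marking and whose end vertex carries the two old markings, i.e.\ it is $-a_0^{(2)}\,\xi_{12*}(\psi_{\bullet}^{g-2})$, so the coefficient created at $n=3$ is $a_{12}$, not one of the $a_{i3}$ as your ``$a_{\cdot n}$'' suggests (the sum $a_0+\sum a_{ij}$ is unaffected, but it shows the graph bookkeeping needs care). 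You would also still have to verify, as you yourself flag, that no ``other'' class on $\overline{\mathcal{M}}_{g,n-1}$ pulls back to contribute to $a_0$ or the $a_{ij}$ — true, but an extra check the direct pairing argument avoids entirely.
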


\subsection{Uniqueness of the coefficients in topological recursion relations}

Theorems~\ref{thm:coeff-bouquet} and \ref{thm:trr-g-2} prove the
existence of topological recursion relations of specific shape.
It is an interesting question how unique such relations are.
This would give much further constraints on the structures of
topological recursion relations.

While in general, we expect there to be many topological recursion
relations of the shape of Theorems~\ref{thm:coeff-bouquet} and
\ref{thm:trr-g-2}, we have the following expectations and results
about uniqueness.
\begin{conjecture}
  \label{conj:bouquet-unique}
  Consider a degree $g$ topological recursion relation of the shape
  \begin{equation*}
    0 = c \cdot \left(\xi_{\Gammaloop_{0,g,[n]}}\right)_*(1)+\cdots,
  \end{equation*}
  where $\Gammaloop_{0,g,[n]}$ is the stable graph with only one genus-0
  vertex equipped with $g$ loops and all the markings.
  The omitted terms in the above formula satisfy the following
  condition: all graphs have at least two vertices, and there are no
  $\kappa$ classes and no genus-$h$ vertices with monomial of $\psi$
  classes of degree $\geq h+\delta_{h}^0$ for $0\leq h\leq g$.

  Then, the coefficient $c$ must vanish.
\end{conjecture}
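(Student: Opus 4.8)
The plan is to prove Conjecture~\ref{conj:bouquet-unique} by pairing the hypothetical relation against a suitable test class and showing that the bouquet class survives the pairing while all omitted terms die, forcing $c=0$. The natural test object is a zero-cycle class supported on the deepest stratum: take the generic point of $\Gammaloop_{0,g,[n]}$, i.e.\ a curve consisting of a single rational component $C$ with $n$ marked points and $g$ nodes (each node obtained by gluing two points of $C$). Since the relation is a tautological relation in $R^g(\overline{\mathcal M}_{g,n})$, it holds after restriction to (or intersection with) any tautological class; the idea is to intersect with a complementary-dimensional class $\beta$ chosen so that $\int_{\overline{\mathcal M}_{g,n}} \left(\xi_{\Gammaloop_{0,g,[n]}}\right)_*(1)\cdot \beta \neq 0$ while $\int \gamma\cdot\beta = 0$ for every boundary term $\gamma$ appearing among the ``$\cdots$''.

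First I would compute $\dim \overline{\mathcal M}_{g,n} = 3g-3+n$ and note that $\left(\xi_{\Gammaloop_{0,g,[n]}}\right)_*(1)$ has codimension $g$ (the graph $\Gammaloop_{0,g,[n]}$ has a single genus-$0$ vertex with $2g+n$ markings, hence its moduli is $\overline{\mathcal M}_{0,2g+n}$ of dimension $2g-3+n$, and $3g-3+n-(2g-3+n)=g$). So the complementary class $\beta$ lives in codimension $2g-3+n$. The cleanest choice is to push forward from the bouquet stratum itself: let $\beta = \left(\xi_{\Gammaloop_{0,g,[n]}}\right)_*(\alpha)$ where $\alpha\in R^{2g-3+n}(\overline{\mathcal M}_{0,2g+n})$ is a top-degree class with nonzero degree, e.g.\ a monomial in $\psi$ and boundary classes on $\overline{\mathcal M}_{0,2g+n}$ integrating to $1$. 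Then the self-intersection $\left(\xi_{\Gammaloop}\right)_*(1)\cdot\left(\xi_{\Gammaloop}\right)_*(\alpha)$ is computed by the excess-intersection/self-node formula: it equals the pushforward along $\xi_{\Gammaloop}$ of $\alpha$ times the Euler class of the excess normal bundle, a polynomial in the $\psi$-classes at the $2g$ gluing points. One must check this integral is nonzero -- a genus-$0$ $\psi$-integral computation -- which I expect to work out to a positive rational number (a product of multinomials), reflecting the nonvanishing already implicit in Theorem~\ref{thm:coeff-bouquet}.

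The main obstacle, and the technical heart of the argument, is showing that \emph{every} omitted term $\gamma$ pairs to zero against $\beta$. Here the hypotheses on the omitted terms -- at least two vertices, no $\kappa$ classes, and no genus-$h$ vertex carrying $\psi$-monomials of degree $\ge h+\delta_h^0$ -- must be used decisively. The strategy is dimensional/combinatorial: intersecting $\gamma$ with $\beta=\left(\xi_{\Gammaloop}\right)_*(\alpha)$ amounts to restricting $\gamma$ to the closed stratum $\overline{\mathcal M}_{0,2g+n}$ and integrating against $\alpha$; this restriction is supported on the locus where the generic curve of $\gamma$ degenerates further to a bouquet, which forces each vertex of the stable graph of $\gamma$ to be genus $0$ after the degeneration and forces enough $\psi$-classes or $\kappa$-classes to appear to exceed the available dimension. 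Concretely, I would argue that a boundary class with $\ge 2$ vertices, once pulled back to the bouquet stratum, either vanishes for degree reasons or produces a class whose $\psi$/$\kappa$-degree on some genus-$0$ piece is too small to be top-dimensional there -- precisely the ``no genus-$0$ vertex with $\psi$ classes'' and low-degree constraints rule out the compensating contributions. Making this rigorous likely requires the combinatorial apparatus for intersecting boundary strata (fiber products of stable graphs, as in Graber--Pandharipande / the strata-algebra formalism), and the genus-$0$ case is the base of an induction on $g$ via the structural results from \cite{clader2023topological}. I expect the bookkeeping of which degenerations of a two-vertex graph can hit the bouquet stratum, together with the induced $\psi$-class normal-bundle contributions, to be the step that consumes the most effort; everything else is a dimension count plus a single explicit genus-$0$ integral.
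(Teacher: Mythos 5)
First, note that the paper does not prove this statement at all: it is stated as Conjecture~\ref{conj:bouquet-unique} and left open, with only partial evidence given by Theorem~\ref{thm:coeff-bouquet-general} (via Proposition~\ref{prop:1-loop-rel-M}), which verifies the corresponding coefficient identity only for the particular relations produced by Pixton's formula as in Corollary~\ref{cor:deg-d-TRR-uniform} --- not for an arbitrary relation of the stated shape. So any complete proof you give would necessarily go beyond the paper; the question is whether your pairing argument actually closes the gap, and it does not.

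There are two problems, one minor and one fatal. The minor one is dimensional: you correctly note that the test class $\beta$ must have codimension $2g-3+n$, but then take $\beta=\left(\xi_{\Gammaloop_{0,g,[n]}}\right)_*(\alpha)$ with $\alpha$ of top degree $2g-3+n$ on $\overline{\mathcal{M}}_{0,2g+n}$; this pushforward is a zero-cycle of codimension $3g-3+n$, so its product with a degree-$g$ relation is trivially zero and detects nothing (you would need $\deg\alpha=g-3+n$). The fatal problem is the ``technical heart'' you defer: it is simply not true that the omitted terms pair to zero against a class supported on the bouquet stratum, and the hypotheses on the omitted terms do not help. By the product formula \eqref{eqn:stra-prod}, intersecting a two-vertex boundary class $[\Gamma_1,\gamma_1]$ (even undecorated, with no $\kappa$ and no $\psi$) with $\left(\xi_{\Gammaloop_{0,g,[n]}}\right)_*(\alpha)$ produces terms supported on common degenerations, decorated by excess factors $(-\psi_h-\psi_{h'})$ at the shared nodes; these node $\psi$-classes arise from the intersection itself and are not constrained by the ``no genus-$h$ vertex with $\psi$-degree $\ge h+\delta_h^0$'' hypothesis, which only restricts the decorations appearing in the original relation. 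For instance on $\overline{\mathcal{M}}_{2,1}$ the undecorated two-edge graph joining a genus-$1$ and a genus-$0$ vertex has degenerations hitting the bouquet stratum, and the resulting genus-$0$ integrals are generically nonzero. So there is no support/dimension argument; what you actually need is an $\alpha$ lying in the common kernel of all these pairings while the (excess) self-pairing of the bouquet class survives, and the existence of such a separating tautological class is essentially a restatement of the conjecture rather than a route to it. This is precisely why the authors do not argue by pairing with test classes but instead prove the explicit contribution identity of Proposition~\ref{prop:1-loop-rel-M} for their specific family of relations, leaving the general statement open.
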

In Theorem~\ref{thm:coeff-bouquet-general}, we give some evidence to this conjecture.

The coefficients corresponding to strata of curves with rational tails have a very interesting close relationship to Faber's intersection number conjecture (cf. \cite[Conjecture~1(c)]{faber1999conjectural}), which was proved in \cite{liu2009proof}, \cite{buryak2011new}, \cite{garcia2022curious}. 
For $n\geq3$, the coefficients corresponding to strata of curves with rational tails are in general not unique.
This can be seen via Belorousski-Pandharipande relation in $R^2(\overline{M}_{2,3})$ (cf. \cite{belorousski2000descendent}).
For $n\geq3$, motivated by Faber's intersection number conjecture, we expect, in the expression of topological recursion relation $\prod_{i=1}^{n}{\psi_{i}}^{k_i}=\text{boundary classes}$, there should be a common factor $\frac{1}{\prod_{i=1}^{n}(2k_i-1)!!}$ in the coefficients of all rational tails loci.
As we at the moment lack explicit formulae of these coefficients, we postpone the study of this interesting problem.

\subsection{Universal equations for Gromov--Witten invariants}Let $X$ be a smooth projective variety and $\{\phi_\alpha: \alpha=1,\dots,N\}$ be a basis of its cohomology ring $H^*(X;\mathbb{C})$ with $\phi_1=\mathbf{1}$ the identity.
Recall that the \emph{big phase space} for Gromov--Witten invariants of $X$ is defined to be $\prod_{n=0}^{\infty}H^*(X;\mathbb{C})$ with standard
basis $\{\tau_{n}(\phi_\alpha): \alpha=1,\dots,N, n\geq0\}$. Let $\{t_n^\alpha\}$ be 
 the coordinates on the big phase space
with respect to the standard basis.
Let $F_g$ be the genus-$g$ generating function, which is a formal power
series of $\mathbf{t}=(t_n^\alpha)$  with coefficients being the  genus-$g$ Gromov--Witten invariants (see Section~\ref{sec:app-gw} for details).
Denote $\langle\langle\tau_{n_1}(\phi_{\alpha_1}),\dots,\tau_{n_k}(\phi_{\alpha_k})\rangle\rangle_g$ for the derivatives
of $F_g$ with respect to the variables $t_{n_1}^{\alpha_1},\dots,t_{n_k}^{\alpha_k}$.  More generally, the $k$-th covariant derivative of the generating functions $F_g$  with respect to the trivial connection on the big phase space
is denoted by tensors $\langle\langle W_1\ldots W_k\rangle\rangle_{g}$, where $W_1, \dots, W_k$ are vector fields.
For convenience, we identify $\tau_n(\phi_\alpha)$ with
the coordinate vector field $\frac{\partial}{\partial t_n^\alpha}$
on $\prod_{n=0}^{\infty}H^*(X;\mathbb{C})$ for $n\geq0$. If $n<0$, $\tau_n(\phi_\alpha)$ is understood to be the $0$
vector field. We also abbreviate $\tau_0(\phi_\alpha)$ by $\phi_\alpha$. Define $\eta=(\eta_{\alpha\beta})$ to be the matrix of the intersection pairing on $H^{*}(X;\mathbb{C})$  in the basis $\{\phi_{1},\dots,\phi_{N}\}$. We will use
$\eta=(\eta_{\alpha\beta})$ and $\eta^{-1}=(\eta^{\alpha\beta})$ to lower and  raise indices, for example $\phi^{\alpha}:=\eta^{\alpha\beta}\phi_{\beta}$ for any $\alpha$.
Here we use the summation convention that repeated indices should be summed over their entire ranges.

% we assume $H^{\text{odd}}(X,\mathbb{C})=0$ below.
% Let $\{\phi_{1},\dots,\phi_{N}\}$ be a fixed basis of $H^{*}(X,\mathbb{C})$, where $\phi_{1}$
% is the identity element of the cohomology ring of $M$.
% The  corresponding
% basis for the $n$-th copy of $H^{*}(X,\mathbb{C})$ in this product is denoted by
% $\{ \phi_{n} \mid \alpha=1, \ldots, N\}$ for $ n \geq 0$.
% Let $\{t_{n}^{\alpha}\}$ be the coordinates on $\mathcal{P}$
% with respect to the standard basis $\{ \phi_{n} \mid \alpha=1, \ldots, N, \,\,\, n \geq 0\}$.
%  We will also identify the small phase space with
% the subspace of $\mathcal{P}$ defined by $t_n^{\alpha}=0$ for $n>0$.
%
%
%
%The genus-$g$ potential function $F_{g}$
% is
%a formal power series of $t=(t_{n}^{\alpha})$ with coefficients given by
%$\gwig{\grav{n_1}{\alpha_1} \cdots \grav{n_k}{\alpha_k}}$.
%Derivatives of
%$F_{g}$ with respect to $t_{n_{1}}^{\alpha_{1}}, \ldots, t_{n_{k}}^{\alpha_{k}}$ are denoted by
%$\gwiig{\tau_{n_{1}}(\alpha_{1}) \cdots \tau_{n_{k}}(\alpha_{k})}$.
% We will identify $\tau_{n}(\phi_{\alpha})$ with $\frac{\partial}{\partial t_{n}^{\alpha}}$ as vector fields on the big phase space. If $n<0$, $\tau_{n}(\phi_{\alpha})$ is understood to be the zero vector field. We will also write $\tau_{0}(\phi_{\alpha})$ simply as $\phi_{\alpha}$.
%Any vector field of the form $\sum_{\alpha}f_{\alpha}\phi_{\alpha}$, where $f_{\alpha}$ are functions on the big phase space, is called a $primary$ $vector$ $field$.
For any vector fields $W_{1}$ and $W_{2}$ on the big phase space, the {\it quantum product} of $W_{1}$ and $W_{2}$ is defined by
$$W_{1}\bullet W_{2}:=\sum_{\alpha}\langle\langle{W_{1}W_{2}
\phi^{\alpha}}\rangle\rangle_0\phi_{\alpha}.$$
Define
the operator $T$ on the space of vector fields by
\[T(W)=\tau_{+}(W)-\sum_{\alpha}\langle\langle W\phi^\alpha\rangle\rangle_0\phi_\alpha\]
where  $\tau_{+}$ is the operator which increases  the level of descendants by one, i.e.\ $\tau_+(\tau_k(\phi)) = \tau_{k + 1}(\phi)$.
The operator $T$ is very useful for
the translating topological recursion relations  into universal equations (cf.\ \cite{liu2006gromov}). 
It is well known that such universal equations play an important role in computing higher genus Gromov--Witten invariants and studying the famous Virasoro conjecture (cf.\ \cite{liu2002quantum}).
It is also conjectured that all such equations can determine all higher genus Gromov--Witten invariants  in terms of genus-0 Gromov--Witten invariants in the semisimple case (cf.\ \cite{liu2006gromov}), which differs from the approach of Givental (cf.\ \cite{givental2001semisimple}) or Dubrovin-Zhang (cf.\ \cite{dubrovin1998bihamiltonian}).   
Thus finding as explicit as possible formulas for such universal  equations is very important, even though they are very complicated in general.

As a byproduct of Theorem~\ref{thm:coeff-bouquet}, we have
\begin{corollary}[Corollary~\ref{cor:bouquet-correlation-fun}]
For any smooth projective variety and  non-negative integers $\{k_i\}_{i=1}^{n}$ satisfying $\sum_{i=1}^{n}k_i=g\geq1$, there exists a topological recursion relation between its Gromov--Witten invariants of the form
\begin{align*}
\langle\langle T^{k_1}(W_1)\ldots T^{k_n}(W_n)\rangle\rangle_g =\frac{1}{8^g \prod_{i=1}^{n}(2k_i+1)!!}\sum_{\alpha_1,\dots,\alpha_{g}=1}^{N}\langle\langle W_1\ldots W_n\phi_{\alpha_1}\phi^{\alpha_1}\ldots \phi_{\alpha_g}\phi^{\alpha_g}\rangle\rangle_0+\cdots
\end{align*}
 The omitted terms in the above formula are polynomials of tensors $\{\langle\langle\ldots\rangle\rangle_h: 0\leq h\leq g\}$ which satisfy the following condition: there are no
 genus-$h$ tensors $\langle\langle\ldots\rangle\rangle_h$  with insertions  of operator $T$  of degree $\geq h+\delta_{h}^0$ for
$0\leq h\leq g$.
\end{corollary}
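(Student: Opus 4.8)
The plan is to translate the tautological relation of Theorem~\ref{thm:coeff-bouquet} into a universal equation for Gromov--Witten invariants using the dictionary of Liu (cf.\ \cite{liu2006gromov}). First I would recall the standard correspondence: given any smooth projective variety $X$ and any stable map class, the Gromov--Witten class $\Lambda_{g,n}^X(\gamma_1,\dots,\gamma_n) \in H^*(\overline{\mathcal{M}}_{g,n})$ pushed forward by the stabilization morphism interacts with $\psi$ classes and boundary gluing morphisms in a way that is compatible with pullback and pushforward; concretely, capping a tautological relation on $\overline{\mathcal{M}}_{g,n}$ with such a class and integrating produces a relation among Gromov--Witten invariants. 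Under this dictionary, a $\psi$ class raised to power $k_i$ on the $i$-th leg translates into the descendant operator $\tau_{k_i}$; the subtraction of genus-0 correlators inside the operator $T$ comes precisely from the comparison between $\psi$ classes on $\overline{\mathcal{M}}_{g,n}$ and $\psi$ classes pulled back along forgetful maps, i.e.\ the relation $\psi_i = \pi^*\psi_i + (\text{boundary divisor})$, which upon iteration and insertion of the genus-0 three-point function yields $T^{k_i}(W_i)$ in place of $\psi_i^{k_i}$; and a gluing morphism $\xi_\Gamma$ translates into a sum over a basis $\{\phi_\alpha\}$ with the dual basis $\{\phi^\alpha\}$ inserted at the two half-edges of each edge of the stable graph $\Gamma$, each edge contributing a factor $\sum_\alpha \phi_{\alpha}\phi^{\alpha}$ and each vertex contributing a correlator of the corresponding genus.

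Applying this to the specific relation of Theorem~\ref{thm:coeff-bouquet}: the left-hand side $\prod_{i=1}^n \psi_i^{k_i}$ becomes $\langle\langle T^{k_1}(W_1)\ldots T^{k_n}(W_n)\rangle\rangle_g$ after capping with the appropriate Gromov--Witten class and summing over insertions $W_1,\dots,W_n$; the constant $\frac{1}{8^g\prod_{i=1}^n(2k_i+1)!!}$ is untouched. The bouquet class $\left(\xi_{\Gammaloop_{0,g,[n]}}\right)_*(1)$ has a single genus-0 vertex carrying all $n$ markings together with $g$ self-loops; each of the $g$ loops contributes a summation $\sum_{\alpha_j}\phi_{\alpha_j}\otimes\phi^{\alpha_j}$ over the two half-edges of that loop, and since all half-edges attach to the one genus-0 vertex, the stratum class becomes the single genus-0 correlator $\sum_{\alpha_1,\dots,\alpha_g}\langle\langle W_1\ldots W_n\phi_{\alpha_1}\phi^{\alpha_1}\ldots\phi_{\alpha_g}\phi^{\alpha_g}\rangle\rangle_0$, exactly as claimed. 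For the error terms, I would invoke the last sentence of Theorem~\ref{thm:coeff-bouquet}: the omitted boundary classes have no $\kappa$ classes and no genus-$h$ vertex carrying a $\psi$-monomial of degree $\geq h + \delta_h^0$; under the dictionary each such stratum class becomes a polynomial in correlators $\langle\langle\ldots\rangle\rangle_h$, and a genus-$h$ vertex with $\psi$-monomial of degree $d$ translates (after the $T$-correction) into a genus-$h$ tensor with $d$ insertions of $T$, so the bound on $\psi$-degree passes verbatim to the bound on the number of $T$-insertions, giving the stated condition on the omitted GW terms.

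The one point requiring genuine care — and the main potential obstacle — is verifying that the translation of $\psi_i^{k_i}$ into $T^{k_i}(W_i)$, and more subtly the translation of $\psi$-monomials on \emph{interior} (non-bouquet) vertices of the omitted strata, produces exactly the operator $T$ and not some other correction, and that no extra terms with too many $T$-insertions or with $\kappa$ contributions sneak in through this comparison. I would handle this by citing the precise comparison lemma of \cite{liu2006gromov} (or \cite{liu2002quantum}) which establishes that, in the passage from tautological relations to universal equations, a factor $\psi_i^{k}$ on a leg at a genus-$h$ vertex contributes $T^k$ applied to that leg while leaving the genus and all other structural data unchanged, and that $\kappa$ classes can be re-expressed via the forgetful map in terms of $\psi$ classes on additional legs, so that their absence on the tautological side is equivalent to their absence (as spurious extra insertions) on the GW side. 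Once that lemma is in hand, the corollary follows by a direct, essentially bookkeeping, application to the relation of Theorem~\ref{thm:coeff-bouquet}.
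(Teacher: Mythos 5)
Your proposal is correct and takes essentially the same route as the paper: the corollary is obtained by capping the relation of Theorem~\ref{thm:coeff-bouquet} with the Gromov--Witten classes and applying the standard dictionary (a $\psi$ power at a leg becomes a power of the operator $T$, each node becomes $\sum_\alpha \phi_\alpha\otimes\phi^\alpha$ via the splitting axiom), i.e.\ exactly the translation rule of \cite{liu2006gromov} that you cite, which the paper itself verifies through the ancestor--descendant comparison (Lemma~\ref{lem:anc-desc} and the splitting axioms for $\Omega^{\mathbf{t}}_{g,n}$) in the appendix. The only slight imprecision is that the subtraction inside $T$ arises from comparing descendant $\psi$ classes on $\overline{\mathcal{M}}_{g,n+m}(X,\beta)$ with ancestor classes pulled back from $\overline{\mathcal{M}}_{g,n}$ via stabilization, rather than from the forgetful-map comparison on the moduli of curves itself, but since you defer to the precise comparison lemma this does not affect the argument.
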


For a function $f$  on the big phase space, we say $f\overset{g}{\approx}0$ if $f$ can be
expressed as a polynomial function of $F_0,F_1,\dots,F_{g-1}$ and their derivatives. Then
Theorem~\ref{thm:trr-g-2} implies:
\begin{corollary}[Corollary~\ref{cor:two-pt-correlation-fun}]
For any smooth projective variety and any non-negative integers $k_1, k_2$ such that $k_1+k_2=g\geq1$, 
 there exists a topological recursion relation between its Gromov--Witten invariants of the form
\begin{align*}
\langle\langle T^{k_1}(W_1)T^{k_2}(W_2)\rangle\rangle_g  -\frac{(2g-1)!!}{(2k_1-1)!! (2k_2-1)!!}\langle\langle T^{g-1}(W_1\bullet W_2)\rangle\rangle_g\overset{g}{\approx}0.  
\end{align*}
\end{corollary}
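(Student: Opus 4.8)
The plan is to deduce this corollary by running the $n=2$ case of Theorem~\ref{thm:trr-g-2} through the translation machinery that turns a tautological relation among $\psi$-classes and boundary strata on $\overline{\mathcal{M}}_{g,n}$ into a universal equation for Gromov--Witten invariants (the procedure recalled in Section~\ref{sec:app-gw}, following \cite{liu2006gromov}); this is in the same spirit as the proof of Corollary~\ref{cor:bouquet-correlation-fun} from Theorem~\ref{thm:coeff-bouquet}. First I would specialize Theorem~\ref{thm:trr-g-2} to $n=2$. Here $\overline{\mathcal{M}}_{0,\{\bar\bullet,\circ\}}$ is unstable, so the footnote convention forces every $a_{ij}=0$, and the theorem supplies a relation
\begin{equation*}
  \psi_1^{k_1}\psi_2^{k_2}
  \;=\; \frac{(2g-1)!!}{(2k_1-1)!!\,(2k_2-1)!!}\;\xi_{*}\!\left(\psi_{\bullet}^{\,g-1}\right)
  \;+\;(\text{other boundary terms}),
\end{equation*}
where $\xi\colon\overline{\mathcal{M}}_{g,\{\bullet\}}\times\overline{\mathcal{M}}_{0,\{\bar\bullet,1,2\}}\to\overline{\mathcal{M}}_{g,2}$ is the gluing map and the remaining boundary terms carry neither $\kappa$ classes nor $\psi$ classes on any genus-$0$ vertex.

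Next I would translate term by term. In the translation one pulls the relation back along the forgetful-and-stabilize morphisms from moduli of stable maps and integrates against descendant Gromov--Witten classes; each $\psi$-decoration — whether at an external marking or at a node — produces, via the forgetful-pullback formula $\pi^{*}\psi=\psi-[\text{bubbling divisors}]$, a family of genus-$0$ bubble corrections which reorganize precisely into the operator $T$. Hence the monomial $\psi_1^{k_1}\psi_2^{k_2}$ on the left becomes $\langle\langle T^{k_1}(W_1)T^{k_2}(W_2)\rangle\rangle_g$, and the decoration $\psi_\bullet^{\,g-1}$ on the genus-$g$ component of $\xi$ becomes $T^{g-1}$ applied to the class glued in at the node. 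On the genus-$0$ three-pointed component of $\xi$ there are no descendants, so summing the dual bases $\phi^\alpha\otimes\phi_\alpha$ over the node yields the three-point tensor $\sum_\alpha\langle\langle W_1W_2\phi^\alpha\rangle\rangle_0\,\phi_\alpha=W_1\bullet W_2$; thus $\xi_{*}(\psi_\bullet^{\,g-1})$ translates to $\langle\langle T^{g-1}(W_1\bullet W_2)\rangle\rangle_g$, carrying the coefficient $\tfrac{(2g-1)!!}{(2k_1-1)!!(2k_2-1)!!}$.

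It remains to see that the contributions of the other boundary terms lie in the class $\overset{g}{\approx}0$. The relevant combinatorial point, special to $n=2$, is that in the relation above every boundary stratum other than the one attached to $\xi$ has all of its vertices of genus strictly less than $g$: if a vertex has genus $g$ then all the others have genus $0$ and the dual graph is a tree, and with only the two markings $\{1,2\}$ available stability permits no such decorated graph beyond the interior monomial $\psi_1^{k_1}\psi_2^{k_2}$ itself or a single genus-$0$ vertex bearing both markings and one node — that is, the stratum of $\xi$. Consequently each remaining boundary term translates into a product, over its vertices, of covariant derivatives of the generating functions $F_0,\dots,F_{g-1}$ joined along edges (the $\psi$-decorations on those positive-genus vertices and the absence of $\kappa$ classes cause no difficulty, as they only produce descendant insertions into the $F_h$ with $h<g$), hence is a polynomial in $F_0,\dots,F_{g-1}$ and their derivatives, i.e.\ $\overset{g}{\approx}0$. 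Collecting the three contributions yields the asserted identity.

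The step requiring the most care is the reorganization of the forgetful-pullback bubble corrections into the operators $T^{k_1},T^{k_2},T^{g-1}$: one must verify that these corrections assemble exactly into the $T$'s, with no stray genus-$g$ remainder falling outside the $\overset{g}{\approx}0$ class. This is precisely the bookkeeping carried out in Section~\ref{sec:app-gw}, and it is the only input needed beyond Theorem~\ref{thm:trr-g-2}.
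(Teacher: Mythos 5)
Your proposal is correct and follows the same route the paper intends: the paper derives this corollary directly from Theorem~\ref{thm:trr-g-2} via the translation rule of Section~\ref{sec:app-gw} (each marking/\,$\psi$ becomes a $T$-insertion, each node a pair $\phi_\alpha,\phi^\alpha$, justified by Lemma~\ref{lem:anc-desc} and the splitting axioms), and your key observation — that for $n=2$ stability forces $a_{ij}=0$ and makes the $\xi$-stratum the unique boundary graph with a genus-$g$ vertex, so all remaining terms involve only $F_0,\dots,F_{g-1}$ and are $\overset{g}{\approx}0$ — is exactly the point needed.
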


\subsection{Coefficient of the bouquet class  in the DR formula of the Hodge class $\lambda_g$}

In \cite{janda2017double}, a nice formula
for the  top Chern class of Hodge bundle $\lambda_g$ was found, which is supported on the divisor of curves with a nonseparating node:
\begin{align*}
\lambda_g=\frac{(-1)^g}{2^g} \mathcal{D}_{g,n}^{g}(0,\dots,0) \in R^{g}(\overline{\mathcal{M}}_{g,n})
\end{align*}
where $\mathcal{D}_{g,n}(a_1,\dots,a_n)$ is the Pixton's formula for double ramification cycles defined in Section~\ref{subsec: pixton-formula}. 
As an application of Theorem~\ref{thm:coeff-bouquet}, we may compute the coefficient of the bouquet class in this formula for $\lambda_g$.

\begin{proposition}
[Proposition~\ref{prop:lambda_g}]
    On $\overline{\mathcal{M}}_{g,n}$, the top Chern class of Hodge bundle may be expressed as
\begin{align*}
\lambda_g=\left[\frac{(-1)^g}{2^g}\frac{t^2e^t}{(e^t-1)^2}\right]_{t^{2g}}
\left(\xi_{\Gammaloop_{0,g,[n]}}\right)_*(1)+\cdots    
\end{align*}
where $\Gammaloop_{0,g,[n]}$ is the stable graph with only one genus-0 vertex equipped with $g$ loops and all the markings.
The omitted terms in the above formula satisfy the following condition: there are no
$\kappa$ classes and no genus-$h$ vertices with monomial of $\psi$ classes of degree $\geq h+\delta_{h}^0$ for
$0\leq h\leq g$. $[\quad]_{t^{2g}}$ means taking the coefficient of $t^{2g}$ in the expressions.
\end{proposition}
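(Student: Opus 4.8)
The plan is to substitute Pixton's formula $\lambda_g=\tfrac{(-1)^g}{2^g}\mathcal{D}^g_{g,n}(0,\dots,0)$ from Section~\ref{subsec: pixton-formula} and then apply Theorem~\ref{thm:coeff-bouquet} to rewrite the resulting boundary expression in the stated form. After the $r\to\infty$ stabilization, $\mathcal{D}^g_{g,n}(0,\dots,0)$ is a sum over stable genus-$g$ graphs $\Gamma$ with $n$ legs and mod-$r$ weightings $w$, each contributing $\tfrac{r^{-h^1(\Gamma)}}{|\Aut(\Gamma)|}\,(\xi_\Gamma)_*$ of a product over its edges $e=(h,h')$ of a universal class whose degree-$d$ part is $\tfrac{(-1)^d(w(h)w(h'))^{d+1}}{(d+1)!}(\psi_h+\psi_{h'})^d$, the whole sum truncated in total degree $g$. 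I would then repeatedly apply the topological recursion relations of Theorem~\ref{thm:coeff-bouquet} at every vertex whose $\psi$-decoration has degree $\ge h+\delta_h^0$, together with the genus-$0$ relations $\psi_i=\sum_S\delta_{0,S}$ to clear $\psi$-classes off genus-$0$ vertices. Since each such step removes a forbidden $\psi$-monomial and the genus-$0$ relations strictly decrease the $\psi$-degree carried by genus-$0$ vertices, the process terminates and rewrites $\lambda_g$ as $c_g\left(\xi_{\Gammaloop_{0,g,[n]}}\right)_*(1)+\cdots$ with all remaining terms of the permitted shape; it then remains to identify $c_g$.

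The key observation is that the coefficient of $\left(\xi_{\Gammaloop_{0,g,[n]}}\right)_*(1)$ picks up a contribution only from the one-vertex graphs in Pixton's formula --- write $\Gamma_h$ for the one with a single genus-$h$ vertex carrying $g-h$ loops, $0\le h\le g$ --- and, for $h\ge 1$, only through the bouquet term of the relation of Theorem~\ref{thm:coeff-bouquet} applied at that vertex. Indeed, a topological recursion relation replaces a vertex by a genus-$0$ vertex with loops or by permitted boundary terms, and a genus-$0$ relation splits a vertex in two; none of these operations turns a graph with at least two vertices into a one-vertex graph, so the bouquet stratum $\Gammaloop_{0,g,[n]}$ can only arise from the $\Gamma_h$. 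For $\Gamma_0$: since $(\xi_{\Gamma_0})_*$ raises degree by $g$, the degree-$g$ truncation retains only the degree-$0$ part of each edge class, so $\Gamma_0$ contributes a scalar multiple of $\left(\xi_{\Gammaloop_{0,g,[n]}}\right)_*(1)$ with no reduction needed. For $h\ge 1$: the same truncation forces the genus-$h$ vertex of $\Gamma_h$ to carry a $\psi$-monomial of total degree $h$ on its $2(g-h)$ half-edge markings; Theorem~\ref{thm:coeff-bouquet} rewrites such a monomial $\prod\psi^{k_i}$ with bouquet coefficient $\tfrac{1}{8^h\prod(2k_i+1)!!}$, and since the composite of $\xi_{\Gamma_h}$ with the genus-$h$ bouquet gluing map equals $\xi_{\Gammaloop_{0,g,[n]}}$, this term produces $\left(\xi_{\Gammaloop_{0,g,[n]}}\right)_*(1)$; the other terms of the relation either keep a positive-genus vertex or have at least two vertices, hence cannot.

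What remains is a generating-function computation. Summing the degree-$d$ contribution of one loop (half-edge classes $\psi'$, $\psi''$, weight $w$) over $w$ and extracting the $r^0$-coefficient uses $\tfrac1r\sum_{w=0}^{r-1}(w(r-w))^{d+1}\big|_{r^0}=(-1)^{d+1}B_{2d+2}$; expanding $(\psi'+\psi'')^d=\sum_{a+b=d}\binom da(\psi')^a(\psi'')^b$, inserting the double-factorial coefficient of Theorem~\ref{thm:coeff-bouquet}, and using
\[
\sum_{a+b=d}\binom da\frac{1}{(2a+1)!!\,(2b+1)!!}=\frac{d!\,2^{3d+1}}{(2d+2)!},
\]
one finds that each loop contributes, in degree $d$, the scalar $q_d=-\tfrac{2B_{2d+2}}{(d+1)(2d+2)!}$. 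Summing over the number $m=g-h$ of loops, with the automorphism weight $\tfrac{1}{2^m m!}$, gives
\[
c_g=\frac{(-1)^g}{2^g}\,[x^g]\exp\!\Big(\tfrac12\,x\,Q(x)\Big),\qquad Q(x)=\sum_{d\ge 0}q_d\,x^d,
\]
and the Bernoulli identity $\sum_{d\ge 0}q_d\,x^{d+1}=-2\sum_{k\ge 1}\tfrac{B_{2k}}{k(2k)!}x^k=-4\log\tfrac{\sinh(\sqrt x/2)}{\sqrt x/2}$ turns this into $\exp\!\big(\tfrac12 xQ(x)\big)=\big(\tfrac{\sqrt x/2}{\sinh(\sqrt x/2)}\big)^2$, whence $c_g=\big[\tfrac{(-1)^g}{2^g}\tfrac{t^2e^t}{(e^t-1)^2}\big]_{t^{2g}}$, as asserted.

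I expect the main obstacle to be the bookkeeping behind the second paragraph: one must check that, across the one-vertex graphs $\Gamma_h$, the automorphism normalizations in Pixton's formula, the coefficient of Theorem~\ref{thm:coeff-bouquet}, and the composition of gluing maps combine with no hidden multiplicities or cancellations, and that each relation used leaves every other term of the permitted shape. The generating-function step is by contrast routine once the double-factorial sum is simplified; as a sanity check, for $g=1$ the formula gives $c_1=\tfrac1{24}$, in agreement with $\lambda_1=\psi_1=\tfrac1{24}\left(\xi_{\Gammaloop_{0,1,[1]}}\right)_*(1)$ on $\overline{\mathcal{M}}_{1,1}$.
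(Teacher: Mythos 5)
Your proposal is correct and follows essentially the same route as the paper: substitute Pixton's formula for $\lambda_g$, note that only the one-vertex graphs (a genus-$(g-m)$ vertex with $m$ loops) can feed the bouquet class, evaluate each loop via $\Coeff_{r^0}\frac1r\sum_{w}(w(r-w))^p=(-1)^pB_{2p}$, apply the bouquet coefficient of Theorem~\ref{thm:coeff-bouquet} after binomially expanding $(\psi_{h}+\psi_{h'})^{k-1}$, and resum the exponential generating series over loops (with the $\tfrac{1}{2^m m!}$ automorphism weight) to $\frac{t^2e^t}{(e^t-1)^2}$. The differences are cosmetic — your closed form $\sum_{a+b=d}\binom{d}{a}\frac{1}{(2a+1)!!(2b+1)!!}=\frac{d!\,2^{3d+1}}{(2d+2)!}$ and the $\sinh$ form of the Bernoulli series instead of the paper's $\ln\frac{e^t-1}{t}$, plus a more explicit argument that multi-vertex graphs cannot produce the bouquet stratum — and, like the paper's own computation, you use the edge factor $\frac{1-e^{-w(h)w(h')(\psi_h+\psi_{h'})}}{\psi_h+\psi_{h'}}$ (no $\tfrac12$), which is the normalization under which $\lambda_g=\frac{(-1)^g}{2^g}\mathcal{D}^{g}_{g,n}(0,\dots,0)$ is stated.
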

\subsection{ Intersection numbers on moduli space of curves}
 
The Gromov--Witten theory of a point, or equivalently, the intersection numbers of the moduli space of stable
curves has been known since Kontsevich's famous proof of Witten's conjecture (\cite{witten1990two}).
There are two parts in Kontsevich's original proof.
First, he reduced the geometric problem to a combinatorial problem, using a topological cell decomposition of the moduli space of
curves to derive formulae for integrals of tautological $\psi$ classes.
Then he derived a matrix integral formula for these expressions, and used this to prove Witten's conjecture.

The degree $3g-3+n$ topological recursion relations on $\overline{\mathcal{M}}_{g,n}$, applied to a point, give a new way to do the first
part of this procedure; that is we find a  new combinatorial recursive  formula for integrals
of tautological $\psi$ classes on the moduli space of curves. 
\begin{theorem}
[Theorem~\ref{thm:algo-inter}]
 For any $\sum_{i=1}^{n}k_i=3g-3+n$,
\begin{align*}
\int_{\overline{\mathcal{M}}_{g,n}}  \prod_{i=1}^{n}{\psi_i}^{k_i}   =\sum_{\substack{\Gamma\in G_{g,n}\\ |E(\Gamma)|\geq1}}\sum_{\{\alpha_v:v\in V(\Gamma)\}} c_{(\Gamma,\{\alpha_v\}_{v\in V(\Gamma)})}\cdot \prod_{v\in V(\Gamma)}\int_{\overline{\mathcal{M}}_{g(v),n(v)}}\alpha_{v}  
\end{align*}
where $\alpha_v$ ranges over all possible  monomials of tautological $\psi$ classes on $\overline{\mathcal{M}}_{g(v),n(v)}$. Each coefficient $c_{(\Gamma,\{\alpha_v\}_{v\in V(\Gamma)})}$ is a rational number coming from the weight on stable graphs in Pixton's double ramifcation formula and factors from string equation and dilaton equation.        
\end{theorem}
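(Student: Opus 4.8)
The plan is to combine a degree-$g$ topological recursion relation --- which by Theorem~\ref{thm:coeff-bouquet} (based on the construction of \cite{clader2023topological}) expresses any degree-$g$ monomial in $\psi$ classes on $\overline{\mathcal{M}}_{g,n}$ as a combination of boundary classes carrying only $\psi$-decorations (no $\kappa$ classes) --- with integration via the projection formula, and then to iterate. The key elementary observation is that $\sum_i k_i = 3g-3+n = \dim\overline{\mathcal{M}}_{g,n} \ge g$ whenever $\overline{\mathcal{M}}_{g,n}$ is stable, so a degree-$g$ relation always suffices to begin the reduction of a top-degree $\psi$-monomial.

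First I would promote the degree-$g$ relation to a degree-$(3g-3+n)$ one. Choosing non-negative integers $a_i \le k_i$ with $\sum_i a_i = g$ (possible since the $k_i$ sum to at least $g$), I would write $\prod_i\psi_i^{k_i} = \big(\prod_i\psi_i^{a_i}\big)\cdot\big(\prod_i\psi_i^{k_i-a_i}\big)$, apply Theorem~\ref{thm:coeff-bouquet} to the first factor to obtain
\[
  \prod_{i=1}^n\psi_i^{a_i} \;=\; \sum_{\substack{\Gamma\in G_{g,n}\\ |E(\Gamma)|\ge 1}}\ \sum_{\{\alpha_v\}} c^{(0)}_{(\Gamma,\{\alpha_v\})}\,(\xi_\Gamma)_*\!\Big(\prod_{v\in V(\Gamma)}\alpha_v\Big),
\]
with each $\alpha_v$ a monomial in $\psi$ classes and the coefficients built from the weights in Pixton's double ramification formula (including the explicit bouquet coefficient), and then multiply back by $\prod_i\psi_i^{k_i-a_i}$. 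Since $\xi_\Gamma^*\psi_i$ is the $\psi$ class at the corresponding marking on whichever vertex carries the leg $i$, the projection formula absorbs these surplus factors into the vertex decorations, leaving a topological recursion relation for $\prod_i\psi_i^{k_i}$ of the same shape: graphs with at least one edge, $\psi$-only decorations, coefficients built from Pixton weights.

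Next I would integrate over $\overline{\mathcal{M}}_{g,n}$. Since $\prod_i\psi_i^{k_i}$ has top degree, each term $(\xi_\Gamma)_*(\prod_v\alpha_v)$ has top degree, so $\prod_v\alpha_v$ has top degree on $\prod_v\overline{\mathcal{M}}_{g(v),n(v)}$, and compatibility of integration with proper pushforward together with Fubini gives $\int_{\overline{\mathcal{M}}_{g,n}}(\xi_\Gamma)_*(\prod_v\alpha_v) = |\Aut(\Gamma)|^{-1}\prod_v\int_{\overline{\mathcal{M}}_{g(v),n(v)}}\alpha_v$. Because $|E(\Gamma)|\ge 1$, one has $\sum_v(3g(v)-3+n(v)) = 3g-3+n-|E(\Gamma)| \le 3g-3+n-1$, so every vertex moduli space has strictly smaller dimension. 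This already yields a formula of the claimed type, with $c_{(\Gamma,\{\alpha_v\})}$ obtained from $c^{(0)}_{(\Gamma,\{\alpha_v\})}$, the factor $|\Aut(\Gamma)|^{-1}$, and the combinatorics of distributing the powers $\psi_i^{k_i-a_i}$ over the vertices. Iterating the construction on each vertex integral terminates because the dimension strictly decreases at every step, bottoming out at $\int_{\overline{\mathcal{M}}_{0,3}}1 = 1$ (with genus-$0$ vertex integrals also handled directly by the string equation); and whenever a vertex decoration has $\psi$-exponent $0$ or $1$ at some marking one may first simplify using the string and dilaton equations, which together with the Pixton weights account for the stated form of the coefficients.

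The hard part is not this skeleton but the bookkeeping in the last step: turning "the coefficients are products of Pixton weights and string/dilaton factors" into a precise, finite recipe requires unwinding the construction of the degree-$g$ relations from Pixton's formula, tracking how the surplus powers $\psi_i^{k_i-a_i}$ redistribute over the strata of each stable graph, and organizing the string/dilaton simplifications so that the whole procedure becomes an unambiguous algorithm. I expect most of the work to lie there, while the reduction-and-integration argument itself is routine.
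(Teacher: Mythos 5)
Your proposal is correct in substance but takes a genuinely different route from the paper. The paper never iterates degree-$g$ relations: it works with the degree-$(3g-3+n)$ component of Pixton's class directly. Since $2g-3+n>0$ implies $3g-3+n>g$, the class $\widehat{\Omega}_{g,M}$ built from $\mathcal{D}^{3g-3+n}_{g,N}$ integrates to zero; the paper expands this integral as a sum over stable graphs (the analogue of Proposition~\ref{prop:omega_g,m-formula}), evaluates the trivial-graph contribution explicitly with $M=\prod_{i\ge 2} a_i^{2k_i}$ --- which yields the target integral $\int\prod_i\psi_i^{k_i}$ plus integrals of strictly lower monomials in the partial order on $\psi$-monomials --- and concludes by induction on that partial order, the graphs with $|E(\Gamma)|\ge 1$ supplying the boundary terms and the explicit coefficient formula \eqref{eqn:contr-trivial-inter-number}. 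You instead invoke the already-established degree-$g$ relations of Theorem~\ref{thm:coeff-bouquet}/Corollary~\ref{cor:deg-d-TRR-uniform}, whose right-hand sides consist only of graphs with at least one edge and $\psi$-decorations, absorb the surplus powers $\psi_i^{k_i-a_i}$ by the projection formula (correctly, since marking $i$ is a leg and $\xi_\Gamma^*\psi_i$ is the corresponding vertex $\psi$ class), and integrate once. This is a legitimate alternative: the triangular induction is already packaged inside Corollary~\ref{cor:deg-d-TRR-uniform}, so your top-level argument is structurally simpler, at the cost of being less explicit about the coefficients than the paper; since the theorem only asserts the coefficients are rational numbers of Pixton/string/dilaton provenance, that is acceptable.

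Two corrections to your bookkeeping. First, there is no factor $|\Aut(\Gamma)|^{-1}$ in $\int_{\overline{\mathcal{M}}_{g,n}}(\xi_\Gamma)_*\bigl(\prod_v\alpha_v\bigr)$: proper push-forward of a zero-dimensional class preserves its degree, so this integral equals $\prod_v\int_{\overline{\mathcal{M}}_{g(v),n(v)}}\alpha_v$ outright; the automorphism factors already sit in the coefficients of the relation when it is written as a sum over isomorphism classes of decorated graphs, and inserting another $|\Aut(\Gamma)|^{-1}$ at the integration step would double-count. Second, your reduction requires $g\ge 1$ at the top level, since there is no degree-$0$ topological recursion relation, whereas the theorem (and the paper's construction, which only needs $2g-3+n>0$) also covers $g=0$, $n\ge 4$. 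Your fallback to the string equation does compute those genus-$0$ numbers, but not in the asserted boundary-graph form; to cover that case in the spirit of the statement you should run the same argument with the degree-$(n-3)$ component of Pixton's class, which is a relation in genus $0$ because its degree exceeds $g=0$.
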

In principle, by induction on  $g$ and $n$, this theorem provides us a combinatorial recursion formula for computing the intersection numbers $\int_{\overline{\mathcal{M}}_{g,n}} \prod_{i=1}^{n}{\psi_i}^{k_i}$ for any $2g-2+n>0$ from initial condition $ \int_{\overline{\mathcal{M}}_{0,3}}1=1$ and $\int_{\overline{\mathcal{M}}_{1,1}}\psi_1=\frac{1}{24}$.
The techniques we use are purely  combinatorial from Pixton's double ramification relation, and thus we obtain  a very different approach from Kontsevich's topological model.

It would be very interesting to try to recover Witten's conjecture from the algorithm in Theorem~\ref{thm:algo-inter}.

\medskip 

This paper is organised as follows:
In Section~\ref{sec:preliminaries}, we review the tautological ring and the algorithm from \cite{clader2023topological} for computing topological recursion relations.
In Section~\ref{sec:coeff-bou}, we study the coefficients of the bouquet class in topological  recursion relations and prove Theorem~\ref{thm:coeff-bouquet}.
In Section~\ref{sec:boun-g-2}, we study the locus of curves with rational tails in topological  recursion relations and prove Theorem~\ref{thm:trr-g-2}.
In Section~\ref{sec:app-gw},  we give some direct applications in Gromov--Witten theory.
In Section~\ref{sec:appl-lambda-g}, we compute the coefficient of the bouquet class in the DR formula for the top Hodge class $\lambda_g$.
In Section~\ref{sec:inter-number-mgn}, we propose a recursive algorithm for  computing intersection numbers on the moduli space of stable curves.

\subsection*{Acknowledgements}

The authors would like to thank E.~Clader and D.~Zakharov for the collaboration \cite{clader2023topological} and for discussions related to this work.
They also thank the anonymous referee for carefully reading the manuscript, and for making helpful suggestions to improve the clarity of this paper.
They are especially grateful to X.~Liu for sharing his generalized conjecture on the coefficient of the bouquet class.
The first author was partially supported by NSF grants DMS-2054830 and DMS-2239320.
The second author was partially supported by  National
Science Foundation of China (Grant No. 12071255) and Shandong Provincial Natural Science Foundation
(Grant No. ZR2021MA101).

\section{The tautological ring and topological recursion relations}
\label{sec:preliminaries}

\subsection{The strata algebra and the tautological ring of $\overline{\mathcal{M}}_{g,n}$}
\label{sec:strata}

The additive generators of the tautological ring
of $\overline{\mathcal{M}}_{g,n}$ are described by the strata algebra.
In this section, we review basic definitions following \cite[\S 2.1]{clader2023topological}, and refer the reader to \cite{graber2003constructions} and \cite{pandharipande2015relations} for more details.

A {\it stable graph} $\Gamma=(V,H,E,L,g,p,\iota,m)$ of genus $g$ with $n$ legs consists of the following data:
\begin{enumerate}
\item a finite set of vertices $V$  with a genus function $g\colon V\to \mathbb{Z}_{\geq 0}$;

\item a finite set of half-edges $H$  with a vertex assignment $p\colon H\to V$ and an involution $\iota\colon H\to H$;

\item a set of edges $E$, which is the set of two-point orbits of $\iota$;

\item a set of legs $L$, which is the set of fixed points of $\iota$, and which is marked by a bijection $m\colon\{1,\ldots,n\}\to L$.

\item The graph $(V,E)$ is connected.

\item For every vertex $v\in V$, we have
\begin{equation*}
  2g(v)-2+n(v) > 0,
\end{equation*}
where $n(v)=|  p^{-1}(v)|$ is the {\it valence} of the vertex $v$
\item The genus of the graph is $g$, in the sense that
\begin{equation*}\label{eq:genus}
g=h^1(\Gamma)+\sum_{v\in V}g(v),
\end{equation*}
where $h^1(\Gamma)= |E|- |V|+1$.
\end{enumerate}

An automorphism of a stable graph $\Gamma$ consists of permutations of the sets $V$ and $H$ which
leave invariant the structures  $g$, $p$, $\iota$ and $m$ (and hence preserve $L$ and $E$). We denote by $\Aut(\Gamma)$ the automorphism group of $\Gamma$.

% Given a stable curve $C$ of genus $g$ with $n$ marked points, its dual graph is a stable graph of genus $g$ with $n$ legs. 
To each stable graph $\Gamma$, let
\begin{equation*}
\overline{\mathcal{M}}_{\Gamma}:=\prod_{v\in V}\overline{\mathcal{M}}_{g(v),n(v)}.
\end{equation*}
There is a generalized  gluing map
\begin{equation}\label{eq:xi}
\xi_{\Gamma}\colon\overline{\mathcal{M}}_{\Gamma}\to \overline{\mathcal{M}}_{g,n},
\end{equation}
whose image is the locus in $\overline{\mathcal{M}}_{g,n}$ having generic point corresponding to a curve with stable graph $\Gamma$. To
construct $\xi_{\Gamma}$, a family of stable pointed curves over $\overline{\mathcal{M}}_{\Gamma}$ is required. Such a
family is easily defined by attaching the pull-backs of the universal families
over each of the $\overline{\mathcal{M}}_{g(v),n(v)}$ along the sections corresponding to half-edges.  The degree of $\xi_{\Gamma}$, as a map of Deligne--Mumford stacks, is equal to $|\Aut(\Gamma)|$.

Additive generators of the tautological ring can be described in terms of certain decorations on stable graphs $\Gamma$.  Namely, let
\begin{equation*}
\gamma=(x_i:V\to \mathbb{Z}_{\geq 0},y:H\to \mathbb{Z}_{\geq 0})
\end{equation*}
be a collection of functions such that
\begin{equation*}
d(\gamma_v)=\sum_{i>0}ix_i[v]+\sum_{h\in p^{-1}(v)} y[h]\leq 3g(v)-3+n(v)
\end{equation*}
for all $v\in V$.  Then, for each $v$, define
\begin{equation*}
\gamma_v=\prod_{i>0}\kappa_i^{x_i[v]}\prod_{h\in p^{-1}(v)}
\psi_h^{y[h]}\in A^{d(\gamma_v)}(\overline{\mathcal{M}}_{g(v),n(v)}).
\end{equation*}
Associated to any such choice of decorations $\gamma$, there is a {\it basic class} on $\overline{\mathcal{M}}_{\Gamma}$, also denoted by $\gamma$, defined by
\begin{equation*}
\gamma=\prod_{v\in V}\gamma_v \in A^{d(\gamma)}(\overline{\mathcal{M}}_{\Gamma}).
\end{equation*}
Here, the degree is $d(\gamma):=\sum_{v\in V}d(\gamma_v)$, and we abuse notation slightly by using a product over classes on the vertex moduli spaces to denote a class on $\overline{\mathcal{M}}_{\Gamma}$.

The {\it strata algebra} $\mathcal{S}_{g,n}$ is defined to be  the finite dimensional $\mathbb{Q}$-vector space spanned by isomorphism classes of pairs $[\Gamma,\gamma]$, where $\Gamma$ is a stable graph of genus $g$ with $n$ legs and $\gamma$ is a basic class on $\overline{\mathcal{M}}_{\Gamma}$.  The product is defined by intersection theory (see~\cite{faber1999algorithms}).  More precisely, for $[\Gamma_1,\gamma_1], [\Gamma_2,\gamma_2]\in \mathcal{S}_{g,n}$, the fiber product of $\xi_{\Gamma_1}$ and $\xi_{\Gamma_2}$ over $\overline{\mathcal{M}}_{g,n}$ is a disjoint union of $\xi_{\Gamma}$ over all graphs $\Gamma$ having edge set $E=E_1\cup E_2$, such that $\Gamma_1$ is obtained by contracting all edges outside of $E_1$ and $\Gamma_2$ is obtained by contracting all edges outside of $E_2$. We then define the product by
\begin{align}\label{eqn:stra-prod}
[\Gamma_1,\gamma_1]\cdot [\Gamma_2,\gamma_2]=\sum_{\Gamma}\left[\Gamma,\gamma_1\gamma_2\prod_{(h,h')\in E_1\cap E_2}(-\psi_h-\psi_{h'})\right].
\end{align}

Pushing forward elements of the strata algebra along the gluing maps \eqref{eq:xi} defines a ring homomorphism
\begin{align*}
&q\colon\mathcal{S}_{g,n}\to A^*(\overline{\mathcal{M}}_{g,n})\\
&q([\Gamma,\gamma])=\xi_{\Gamma*}(\gamma),
\end{align*}
and the image of $q$ is precisely the tautological ring $R^*(\overline{\mathcal{M}}_{g,n})$.  Elements of $\mathcal{S}_{g,n}$ in the kernel of $q$ are referred to as {\it tautological relations}. Let $\mathcal{S}^{trr}_{g,n}\subset \mathcal{S}_{g,n}$ denote the subalgebra spanned by classes which do not have any $\kappa$ classes. Then the elements of $\mathcal{S}^{trr}_{g,n}$ in the kernel of $q$ are referred to as {\it topological recursion relations}. 

For each basis element $[\Gamma,\gamma]$, we define its degree as
\begin{equation*}
\deg[\Gamma,\gamma]=|E|+d(\gamma).
\end{equation*}
Since product \eqref{eqn:stra-prod} preserves the degree, then  $\mathcal{S}_{g,n}$ is a graded ring:
\begin{equation*}
\mathcal{S}_{g,n}=\bigoplus_{d=0}^{3g-3+n}\mathcal{S}_{g,n}^d.
\end{equation*}

% Let $\partial \mathcal{S}_{g,n}$ denote the subalgebra of $\mathcal{S}_{g,n}$ spanned by classes $[\Gamma, \gamma]$ in which the graph $\Gamma$ has at least one edge, and let $\partial^0 \mathcal{S}_{g,n} \subset \partial \mathcal{S}_{g,n}$ denote the subalgebra spanned by such classes with no $\kappa$'s in $\gamma$--- that is, with $x_i[v]=0$ for each $i$ and $v$.

% \begin{definition}
% \label{def:TRR}
% Let $\xi \in \mathcal{S}_{g,n}$.  Then a {\it topological recursion relation}, or TRR, for $\xi$ is a tautological relation
% \[q(\xi + \omega) = 0\]
% in which $\omega \in \partial^0\mathcal{S}_{g,n}$.
% \end{definition}

% When $\Gamma$ consists of a single vertex $v$, we will typically denote $[\Gamma, \gamma]$ simply by $\gamma_v$.  In particular, Theorem~\ref{thm:main} concerns TRR's for the case where $\xi$ is a monomial in the $\psi$ classes.

\subsection{Pixton's DR formula}\label{subsec: pixton-formula}

Fix $g$ and $n$, and fix a collection of integers $A = (a_1, \ldots, a_n)$ such that $\sum_j a_j =0$.  In this subsection, we recall the definition of Pixton's class, which is an inhomogeneous element of $\mathcal{S}_{g,n}$ depending on $A$.

For our purpose,  we  first define auxiliary classes $\widetilde{\mathcal{D}}_{g,n}^r$, for an additional positive integer parameter $r$.  For a stable graph $\Gamma=(V,H,g,p,\iota)$ of genus $g$ with $n$ legs, a \emph{weighting modulo $r$} on $\Gamma$ is defined to be a map
\begin{equation*}
  w\colon H \to \{0, \dotsc, r - 1\}
\end{equation*}
satisfying three properties:
\begin{enumerate}
\item For any $i \in \{1, \dotsc, n\}$ corresponding to a leg $\ell_i$ of $\Gamma$, we have $w(\ell_i) \equiv a_i \pmod{r}$.
\item For any edge $e\in E$ corresponding to two half-edges $h, h'\in H$, we
  have $w(h) + w(h') \equiv 0 \pmod{r}$.
\item For any vertex $v \in V$, we have
  $\sum_{h\in p^{-1}(v)} w(h) \equiv 0 \pmod{r}$.
\end{enumerate}
Define $\widetilde{\mathcal{D}}_{g,n}^r$ to be the class
\begin{equation*}
  \sum_{\Gamma\in G_{g,n}} \frac{1}{|\Aut(\Gamma)|} \frac 1{r^{h^1(\Gamma)}}  \sum_{\substack{w \text{ weighting }\\ \text{mod } r \text{ on }\Gamma}} \left[\Gamma, \prod_{i=1}^n e^{\frac 12 a_i^2\psi_i} \prod_{(h,h')\in E} \frac{1 - e^{-\frac 12 w(h)w(h')(\psi_h + \psi_{h'})}}{\psi_h + \psi_{h'}}\right] \in \mathcal{S}_{g,n}.
\end{equation*}

Pixton proved the class $\widetilde{\mathcal{D}}_{g,n}^r$ is a polynomial in
$r$ for $r$ sufficiently large (see \cite{Pixton2024}).  Then Pixton's class is defined to be  the constant term of this polynomial. Furthermore,  by the fact that
\[a_1 = -(a_2 + \cdots +a_n),\]
Pixton's class can be expressed in terms of the variables $\{a_i\}_{i=2}^{n}$, then we denote it by
\[\mathcal{D}_{g,n}(a_2, \ldots, a_n),\]
and  its degree $d$ component  is denoted by $\mathcal{D}^d_{g,n}(a_2, \ldots, a_n)$.
The following statement was conjectured by Pixton and proved in \cite{clader2018pixton}:
\begin{theorem}[\cite{clader2018pixton}]
\label{thm:DR}
For each $d > g$, $\mathcal{D}^d_{g,n}(a_2, \ldots, a_n)$ is a tautological relation.
\end{theorem}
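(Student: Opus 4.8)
The plan is to realize Pixton's class, for each fixed sufficiently large $r$, as a pushforward from a moduli space of $r$-th roots, and then to deduce the vanishing of its high-degree part in $R^*(\overline{\mathcal{M}}_{g,n})$ from the tautological relations produced by Witten's $r$-spin class. Fix $r$ large and let $\epsilon\colon \overline{\mathcal{M}}{}^{r}_{g,A}\to\overline{\mathcal{M}}_{g,n}$ be the proper (Deligne--Mumford) moduli space of $r$-th tensor roots of the line bundle $\mathcal{O}(\sum_i a_i x_i)$ on the universal curve (twisted appropriately at the nodes and markings), with universal root $\mathcal{L}$ and projection $\pi$ from the universal curve. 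The first step is a Grothendieck--Riemann--Roch computation, in the style of Chiodo: $\epsilon_*$ of the appropriate Chern-class expression built from $R^{\bullet}\pi_*\mathcal{L}$ is a sum over stable graphs that matches $\widetilde{\mathcal{D}}{}^{r}_{g,n}$ term by term. The factor $r^{-h^1(\Gamma)}$ and the inner sum over weightings modulo $r$ record, respectively, the degree of $\epsilon$ over a stratum and the combinatorics of $r$-torsion line bundles compatible with the node structure, while the vertex factors $e^{\frac12 a_i^2\psi_i}$ and the edge factors $\frac{1-e^{-\frac12 w(h)w(h')(\psi_h+\psi_{h'})}}{\psi_h+\psi_{h'}}$ are exactly what Chiodo's formula produces after the usual Bernoulli-type identities.

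The second step is to feed in the known tautological relations from $r$-spin theory. The Chern-class expression above is the value on $\overline{\mathcal{M}}_{g,n}$ of a cohomological field theory obtained from (a shift of) Witten's $r$-spin, or equivalently Chiodo's, class; after the shift this CohFT is semisimple, so by the Givental--Teleman reconstruction it is obtained from its genus-$0$ topological part by an explicit $R$-matrix action. That reconstruction is, term by term, a sum over stable graphs with tautological vertex and edge decorations, precisely of the shape of the graph sum defining $\widetilde{\mathcal{D}}{}^{r}_{g,n}$. On the other hand, the underlying unreconstructed class is supported in a single, bounded cohomological degree. Comparing the two descriptions forces every graph-sum component of degree exceeding that bound to lie in $\ker q$, i.e.\ to be a tautological relation; this is the mechanism behind Pandharipande--Pixton--Zvonkine's $r$-spin relations.

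The third step is Pixton's polynomiality argument. One uses that $\widetilde{\mathcal{D}}{}^{r}_{g,n}$ is a polynomial in $r$ for $r$ large --- exactly the statement recalled before Theorem~\ref{thm:DR}, proved by Pixton --- so that its constant term is $\mathcal{D}_{g,n}(a_2,\ldots,a_n)$, and the vanishing statements for the coefficients of individual powers of $r$ descend in particular to the constant term. In the specialization relevant to the double ramification cycle, the degree bound appearing in the second step is exactly $g$; this is the same limiting phenomenon whose degree-$g$ shadow yields $\mathcal{D}^g_{g,n}(A)=2^g\,\mathrm{DR}_g(A)$ (cf.\ \cite{janda2017double}). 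Hence the constant-in-$r$, degree-$d$ part $\mathcal{D}^d_{g,n}(a_2,\ldots,a_n)$ is a tautological relation for every $d>g$, which is the claim; the combinatorial structure of Pixton's double ramification cycle relations \cite{clader2018pixton} is then read off directly from this derivation.

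The hardest part is keeping the argument inside the Chow ring: the Givental--Teleman reconstruction is a priori a statement in cohomology, whereas Theorem~\ref{thm:DR} asserts membership in the kernel of $q\colon\mathcal{S}_{g,n}\to A^*(\overline{\mathcal{M}}_{g,n})$. Handling this requires verifying that all the $R$-matrix entries and intermediate classes are genuinely Chow-tautological and that the $r$-spin relations themselves can be established in the Chow ring. Two further technical points are (i) proving the GRR identification of the $\epsilon_*(R^{\bullet}\pi_*\mathcal{L})$-type class with Pixton's graph sum uniformly in $r$, including the correct bookkeeping of automorphisms, weightings, and powers of $r$, and (ii) controlling the relevant degree bound as $r\to\infty$ so that it stabilizes precisely at $g$. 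A somewhat more hands-on alternative, which avoids part of the CohFT apparatus, is to start from the Pandharipande--Pixton--Zvonkine $r$-spin relations on $\overline{\mathcal{M}}_{g,n}$ as a black box and obtain the $d>g$ relations by an explicit parameter substitution together with a limiting argument, at the price of heavier combinatorial computation.
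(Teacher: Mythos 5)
First, a point of comparison: this paper does not prove Theorem~\ref{thm:DR} at all --- it is imported verbatim from \cite{clader2018pixton} --- so your proposal has to be measured against the proof in that reference, not against anything in the present text, and that proof does not proceed by the mechanism you describe.

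The central step of your sketch has a genuine gap. The purity mechanism you invoke in step two (``the underlying unreconstructed class is supported in a single, bounded cohomological degree, hence every graph-sum component of larger degree lies in $\ker q$'') is the engine behind the Pandharipande--Pixton--Zvonkine $r$-spin relations precisely because Witten's $r$-spin class has pure complex degree $\bigl((r-2)(g-1)+\sum_i a_i\bigr)/r$. But the class your first step produces via Chiodo-style GRR is not Witten's class: it is the pushforward of a full mixed-degree expression built from $R^\bullet\pi_*\mathcal{L}$, and for a \emph{fixed} large $r$ its degree-$d$ parts are nonzero tautological classes in many degrees $d>g$; there is no fixed-$r$ vanishing available to feed into the polynomiality argument. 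Conversely, if you replace the Chiodo class by Witten's class to regain purity, the Givental--Teleman graph sum you obtain is no longer the graph sum defining $\widetilde{\mathcal{D}}_{g,n}^r$, so the two halves of your argument apply to two different CohFTs. The point where they are supposed to meet --- that ``the relevant degree bound stabilizes precisely at $g$ as $r\to\infty$'' --- is exactly the content of the theorem, and you defer it as a technical point without an argument; likewise the sentence ``the vanishing statements for the coefficients of individual powers of $r$ descend to the constant term'' has nothing to descend from, since the $d>g$ vanishing is a property only of the $r$-constant term (Pixton's class), not of $\widetilde{\mathcal{D}}_{g,n}^r$ for fixed $r$. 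This is why Clader--Janda need a genuinely different and substantially more involved argument (and, separately, an explanation of how the relations can be deduced from Pixton's $3$-spin relations); the Chow-versus-cohomology issue you flag is real but secondary to this missing degree bound. Finally, reading off ``the combinatorial structure of Pixton's relations \cite{clader2018pixton}'' from the derivation is circular, since that is the statement being proved.
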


% We refer to these as ``Pixton's relations" in what follows.

\subsection{Polynomiality properties}
\label{subsec:poly}

In fact, Pixton proved the following result about the polynomiality of $\mathcal{D}_{g,n}(a_2, \ldots, a_n)$, which Spelier provided an alternative proof:
\begin{theorem}[Pixton \cite{Pixton2024}, Spelier \cite{spelier2024polynomiality}]
\label{thm:poly}
The class $\mathcal{D}_{g,n}(a_2, \ldots, a_n)$ depends polynomially on $a_2, \ldots, a_n$.
\end{theorem}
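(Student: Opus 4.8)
The plan is to prove polynomiality of $\mathcal{D}_{g,n}(a_2,\ldots,a_n)$ by analyzing directly the structure of the auxiliary classes $\widetilde{\mathcal{D}}^r_{g,n}$ and showing that, after passing to the constant term in $r$, each coefficient of a fixed basic class $[\Gamma,\gamma]$ is given by a polynomial in the $a_i$. The first step is to fix a stable graph $\Gamma$ with first Betti number $h = h^1(\Gamma)$ and a decoration $\gamma$ of it, and to isolate the contribution of $[\Gamma,\gamma]$ to $\widetilde{\mathcal{D}}^r_{g,n}$. Expanding the exponentials and the edge factors $\frac{1-e^{-\frac12 w(h)w(h')(\psi_h+\psi_{h'})}}{\psi_h+\psi_{h'}}$ as power series in the $\psi$ classes, the coefficient of $[\Gamma,\gamma]$ is a sum over weightings $w$ modulo $r$ of a fixed polynomial expression in the $a_i$ and the edge-weight products $w(h)w(h')$, divided by $r^{h}$. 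So the whole problem reduces to understanding sums of the form
\begin{equation*}
\frac{1}{r^{h}} \sum_{w \bmod r} \prod_{e=(h,h')\in E} \big(w(h)w(h')\big)^{m_e},
\end{equation*}
where the $m_e$ are fixed nonnegative integers, and the sum runs over all weightings modulo $r$ compatible with the leg values $a_i$.

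The second step is the combinatorial heart: understanding the space of weightings modulo $r$. Since the vertex and edge conditions are linear congruences, the set of weightings is (for $r$ coprime to the relevant torsion, hence for all large primes $r$, or after Pixton's polynomiality-in-$r$ argument, for all large $r$) a coset of an $\mathbb{F}_r$-vector space of dimension $h = h^1(\Gamma)$; concretely, one can choose $h$ independent cycles and the free parameters are the weights assigned to a spanning-tree complement, each ranging over a full residue system modulo $r$, with the remaining weights determined affine-linearly in these parameters and the $a_i$. Then each $w(h)$ appearing in the product is an integer representative of an affine-linear form $\ell_h(\mathbf{x}) + (\text{linear in } a_i)$ in the free parameters $\mathbf{x} = (x_1,\ldots,x_h) \in \{0,\ldots,r-1\}^h$. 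The key point is that after expanding the product, we get sums $\sum_{\mathbf{x} \in \{0,\ldots,r-1\}^h} P(\mathbf{x}, \mathbf{a})$ for a polynomial $P$, and by Faulhaber's formula (sums of powers of an arithmetic-progression-like quantity) each such sum is a polynomial in $r$ whose coefficients are polynomials in the $a_i$; after dividing by $r^h$ and extracting the constant term in $r$, one is left with a polynomial in the $a_i$. The one subtlety requiring care is that the integer representative of an affine-linear form modulo $r$ is only piecewise-linear (it jumps by $r$ when the form wraps around), so the naive Faulhaber computation must be done on each linearity chamber and the chamber boundaries tracked; this is exactly where one must be careful.

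The third step is to assemble: summing the (finitely many) basic-class contributions, each of which is now a polynomial in $\mathbf{a} = (a_2,\ldots,a_n)$ after using $a_1 = -(a_2+\cdots+a_n)$, gives that $\widetilde{\mathcal{D}}^r_{g,n}$ has, coefficient by coefficient in the strata basis, the form $\sum_k r^k Q_k(\mathbf{a})$ with $Q_k$ polynomial; Pixton's theorem (already recalled before Theorem~\ref{thm:poly}) guarantees this is honestly polynomial in $r$ for large $r$, so the constant term $Q_0(\mathbf{a})$ is well-defined, and it is a polynomial in $\mathbf{a}$ — which is the assertion. I expect the main obstacle to be the piecewise-linearity issue in step two: controlling the sum over $\{0,\ldots,r-1\}^h$ of a polynomial in the wrapped integer representatives, and verifying that the chamber-boundary corrections do not spoil polynomiality in $r$ (they contribute lower-order terms supported on hyperplanes, themselves summable by an induction on $h$, so they do not affect the constant term in a way that breaks polynomiality in $\mathbf{a}$, but this needs to be argued carefully). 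An alternative to running this argument from scratch is simply to cite Spelier's proof in \cite{spelier2024polynomiality}, which reorganizes the weightings via a clean "twist" or logarithmic-geometry bookkeeping and makes the polynomial dependence manifest; in the interest of self-containedness, though, the direct Faulhaber-style computation above is the approach I would write out.
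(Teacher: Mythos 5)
The paper does not actually prove Theorem~\ref{thm:poly}: it is imported as an external input, attributed to Pixton \cite{Pixton2024} and Spelier \cite{spelier2024polynomiality}, so there is no internal argument to compare yours against. Judged as a standalone proof, your sketch sets up the right reduction but stops short of the actual content of the theorem. Your first step (isolating a fixed $[\Gamma,\gamma]$, expanding the edge factors, and reducing to sums $\frac{1}{r^{h}}\sum_{w}\prod_e (w(h)w(h'))^{m_e}$ over weightings mod $r$) and the parametrization of the weightings by $h=h^1(\Gamma)$ free parameters on the complement of a spanning tree, with the remaining half-edge weights given by wrapped representatives of affine-linear forms in the free parameters and the $a_i$, are both correct and standard.

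The gap is in the step you yourself flag and then wave through. Because of the wrap-around, the summand is only piecewise polynomial on $\{0,\dotsc,r-1\}^h$, with a chamber decomposition that depends on the residues and signs of linear combinations of the $a_i$; summing chamber by chamber (Ehrhart/Faulhaber style) produces, a priori, an answer that is polynomial in $r$ for $r\gg 0$ but only \emph{piecewise} polynomial (quasi-polynomial) in the $a_i$ --- polynomial on each cone of a fan in $a$-space, with no a priori reason for the polynomials on different cones to agree. Your parenthetical claim that the chamber-boundary corrections ``do not affect the constant term in a way that breaks polynomiality in $\mathbf{a}$'' is precisely the assertion of Theorem~\ref{thm:poly}, not a consequence of Faulhaber's formula, and it is exactly the point where Pixton's and Spelier's proofs do real work (Pixton by a careful analysis of these lattice-point sums, Spelier by reorganizing the weightings via log/tropical geometry so that the dependence on the $a_i$ becomes manifestly polynomial). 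Note also that invoking ``Pixton's polynomiality-in-$r$'' only guarantees that the constant term in $r$ is well defined for each fixed integer vector $(a_2,\dotsc,a_n)$; it says nothing about how that constant term varies with the $a_i$, so it cannot be used to close the gap. As written, the only complete route in your proposal is the fallback of citing \cite{spelier2024polynomiality} (or \cite{Pixton2024}), which is what the paper does.
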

Combining Theorems~\ref{thm:DR} and \ref{thm:poly}, the coefficient of any monomial $a_2^{b_2} \cdots a_n^{b_n}$ in the class $\mathcal{D}_{g,n}^d(a_2, \ldots, a_n)$ yields a tautological relation in $R^d(\overline{\mathcal{M}}_{g,n})$, for each $d > g$:
\begin{equation*}
q\left( \bigg[\mathcal{D}_{g,n}^d(a_2, \ldots, a_n)\bigg]_{a_2^{b_2} \cdots a_n^{b_n}} \right) = 0.
\end{equation*}
% We will use this fact repeatedly in what follows.

% Note that, by the definition of Pixton's class, none of the relations \eqref{eq:rels} involve $\kappa$ classes.  Furthermore, the powers of the $\psi$ classes that appear are controlled by the monomial in question:

% \begin{lemma}
% \label{lem:deg}
% The degree of Pixton's class in $\psi_i$ is bounded by half the degree in $a_i$.

% More explicitly, suppose that the class
% \[\bigg[\mathcal{D}_{g,n}^d(a_2, \ldots, a_n)\bigg]_{a_2^{b_2} \cdots a_n^{b_n}} \in \S_{g,n}^d\]
% is expanded in the standard basis of the strata algebra, and let $[\Gamma, \gamma]$ be a basis element that appears with nonzero coefficient.  For each leg $l_i$ of $\Gamma$ corresponding to a marked point $i \in \{2, \ldots, n\}$, if $\gamma = (\{x_j\}, y)$ in the notation of \eqref{eq:gamma}, then
% \[y(l_i) \leq \frac{b_i}{2}.\]
% \begin{proof}
%   This is straightforward from the definition of
%   $\mathcal{D}_{g,n}^d(a_2, \ldots, a_n)$ and Theorem~\ref{thm:poly}.
% \end{proof}
% \end{lemma}

% \subsection{Algorithm for computing topological recursion relations}
% \label{subsec:alg-trr}

% Given that Pixton's relations \eqref{eq:rels} do not involve any $\kappa$ classes, they are a natural candidate for producing topological recursion relations (TRR's).  Moreover, by multiplying the relations by appropriate $\psi$ classes, one can ensure that $\kappa$ classes do not arise even after pushforward under forgetful maps, thereby yielding a large family of TRR's.  The proof of Theorem~\ref{thm:main}, which we detail in this section, is an application of this idea.

\subsection{An algorithm for deriving topological recursion relations on $\overline{\mathcal{M}}_{g,n}$ }\label{subsec:algorithm-trr}
First, we recall the algorithm for computing general degree-$g$ topological recursion relations on $\overline{\mathcal{M}}_{g,n}$ from \cite{clader2023topological}.

Fix a genus $g$ and a number of marked points $n$.  Let $M$ be a monomial of degree $D\leq 2g+1$ in the variables $a_2, \ldots, a_n$, and let
\begin{align*}
N:= n+2g+2-D.
\end{align*}
Define
\[\Omega_{g,M}^{\text{pre}}=\left[\mathcal{D}^{g+1}_{g,N}(a_2, \ldots, a_N)\right]_{M\cdot a_{n+1}\dots a_{N}}
 \in \mathcal{S}^{g+1}_{g,N}\]
to be the coefficient of the monomial $M \cdot a_{n+1} \cdots a_N$ in Pixton's class $\mathcal{D}^{g+1}_{g,N}(a_2, \ldots, a_N)$.  Note that we use here the polynomiality discussed in Section~\ref{subsec:poly}.
Let
\[\Pi\colon \overline{\mathcal{M}}_{g,N}=\overline{\mathcal{M}}_{g,n+1+(2g+1-D)} \rightarrow \overline{\mathcal{M}}_{g,n+1}\]
be the forgetful map, and define
\[\Omega_{g,M}:= \Pi_*(\Omega_{g,M}^{\text{pre}} \cdot \psi_{n+2} \cdots \psi_{N}) \in \mathcal{S}_{g,n+1}^{g+1},\]
where we use $\Pi_*$ to denote the induced map on strata algebras.
Via the dilaton equation and string equation, pushing-forward $\Omega_{g,M}$
along the forgetful map
$\pi\colon \overline{\mathcal{M}}_{g,n+1} \rightarrow
\overline{\mathcal{M}}_{g,n}$, we obtain a degree-$g$ topological
recursion relation.

\medskip

Topological recursion relations which express any specific degree $g$
monomial $\psi_1^k \prod_{j=2}^n\psi_j^{k_j}$ of $\psi$ classes in terms of
boundary classes, are obtained recursively in terms of linear combinations of
the above topological recursion relations.
To describe them explicitly, we define an order on the set of monomials
$\{\psi_1^k \prod_{j=2}^n \psi_i^{l_j}: 
k+\sum_{j=2}^{n}l_j=g\}$: for any monomials
\begin{equation}
\label{eq:psimon}
M=\psi_1^k \prod_{j=2}^n \psi_i^{l_j},\quad M'=\psi_1^{k'} \prod_{j=2}^{n} \psi_i^{l_j'}
\end{equation}
of degree $g$ in the $\psi$ classes on $\overline{\mathcal{M}}_{g,n}$,
we say that the $\psi$-monomial $M'$ is \emph{lower} than
$M$ if  $l_j' \le l_j$ for $j \ge 2$ with at
least one of this inequalities being strict.

We consider the linear combination on $\overline{\mathcal{M}}_{g,n}$
\begin{equation}
  \label{eq:cancellation}
  \sum_{d_2, \dotsc, d_n = 0}^1
  \frac{\prod_{j = 2}^n (-2l_j)^{1 - d_j}}{(2g + 1 - \sum_{j=2}^{n} (2l_j - d_j))!} \cdot \pi_* \Omega_{g, a_2^{2l_2 - d_2} \cdot \dotsb \cdot a_n^{2l_n - d_n}}=0
\end{equation}
defined when $l_j \ge 1$ for all $j$.
By \cite[Proposition~13]{clader2023topological}, up to a constant multiple, the
combination \eqref{eq:cancellation} gives a topological recursion
relation of the form
\begin{equation}
\label{eqn:want}
\psi_1^{g-\sum_{j=2}^{n} l_j} \prod_{j=2}^n \psi_j^{l_j}
+\sum_{(k_2,\dots,k_n)<(l_2,\dots,l_n)}c_{k_2,k_3,\dots,k_n}\psi_1^{g-\sum_{j=2}^{n}k_j} \prod_{j=2}^n \psi_j^{k_j}=\text{boundary terms}
\end{equation}
where $(k_2,\dots,k_n)<(l_2,\dots,l_n)$ refers to the monomials of $\psi$ classes $\psi_1^{g-\sum_{j=2}^{n}k_j}\prod_{j=2}^{n}\psi_j^{k_j}$ strictly lower than  $\psi_1^{g-\sum_{j=2}^{n}l_j}\prod_{j=2}^{n}\psi_j^{l_j}$, and all $c_{k_2,k_3,\dots,k_n}$ are certain rational numbers determined from Equation~\eqref{eq:cancellation}.
Also note that being topological recursion relations, the boundary terms in \eqref{eqn:want} do not involve any $\kappa$-classes.
Using \eqref{eqn:want} repeatedly yields:
\begin{corollary}\label{cor:deg-d-TRR-uniform}
  For any $\psi$-monomial $\psi_1^{k_1} \cdots \psi_n^{k_n}$ of degree $d \ge g$, there exists a linear combination of the topological recursion relations $\pi_* \Omega_{g, M}$ for various monomials $M$, as well as gluing map push-forwards of such relations of lower genera, which is a topological recursion relation of the form
  \begin{equation*}
    \prod_{j=1}^n \psi_j^{k_j} = \text{boundary terms}.
  \end{equation*}
  The boundary terms in the above formula satisfy the following condition: there are no $\kappa$ classes and no genus-$h$ vertices with monomial of $\psi$ classes of degree $\geq h+\delta_{h}^0$ for $0\leq h\leq g$.
\end{corollary}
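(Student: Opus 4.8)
The plan is to build the desired relation $\prod_j \psi_j^{k_j} = \text{boundary}$ by a double induction: an outer induction on the degree $d$, and, for fixed $d = g$, an inner induction on the partial order on $\psi$-monomials introduced above. The base case $d = g$ with the monomial $\psi_1^g$ (all $l_j = 0$ for $j \ge 2$) is the extreme case of the order, and it is handled directly: one uses the single relation $\pi_* \Omega_{g, 1}$ (i.e.\ $M$ trivial), which by construction already has the form $\psi_1^g = \text{boundary}$, with boundary terms controlled by the shape of Pixton's class $\mathcal{D}^{g+1}_{g,N}$ after the forgetful pushforwards. For the inductive step at degree $d = g$, given a target monomial $\psi_1^{g - \sum l_j}\prod_{j\ge 2}\psi_j^{l_j}$ with all $l_j \ge 1$, I would invoke the combination \eqref{eq:cancellation} and the cited \cite[Proposition~13]{clader2023topological} to obtain \eqref{eqn:want}: a relation whose leading term is the target monomial, plus a $\mathbb{Q}$-linear combination of strictly lower $\psi$-monomials, plus boundary. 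By the inner inductive hypothesis each strictly lower monomial already has a topological recursion relation of the required form, so subtracting off $c_{k_2,\dots,k_n}$ times each of those relations clears the lower $\psi$-monomials and leaves exactly $\prod_j \psi_j^{k_j} = \text{boundary}$. For target monomials where some but not all $l_j$ vanish, one first relabels the marked points so that the nonzero exponents occupy an initial segment, applies the above on that smaller set of "active" legs, and then observes that the $\psi$-free legs are simply carried along.

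The key structural point — which must be checked at every stage of the induction — is that the boundary terms never acquire a forbidden decoration, namely no $\kappa$ classes and no genus-$h$ vertex carrying a $\psi$-monomial of degree $\ge h + \delta_h^0$. For the building blocks $\pi_*\Omega_{g,M}$ this follows from the explicit description of $\mathcal{D}^{g+1}_{g,N}$ in Section~\ref{subsec: pixton-formula}: the vertex decorations there are $e^{\frac12 a_i^2 \psi_i}$ at legs and the edge factors $\frac{1 - e^{-\frac12 w(h)w(h')(\psi_h+\psi_{h'})}}{\psi_h+\psi_{h'}}$, so a priori no $\kappa$'s appear; extracting the coefficient of $M \cdot a_{n+1}\cdots a_N$, multiplying by $\psi_{n+2}\cdots\psi_N$, and pushing forward along $\Pi$ and then $\pi$ via the string and dilaton equations preserves the absence of $\kappa$'s and, by a degree count at each vertex (the total codimension is $g+1$, one edge is spent to reach the boundary, and the forgetful pushforwards only lower vertex $\psi$-degrees), keeps each vertex below the threshold $h + \delta_h^0$. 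The gluing-map pushforwards of lower-genus relations of the same form inherit the property because gluing along a node does not create $\kappa$'s and only inserts $\psi$ classes at the new half-edges in a way already accounted for in the lower-genus statement. Finally, the subtractions in the inductive step are $\mathbb{Q}$-linear combinations of relations all having the good boundary shape, so the property is stable under the whole construction.

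The outer induction on $d > g$ then reduces to the degree-$g$ case: given $\prod_j \psi_j^{k_j}$ with $\sum k_j = d$, write it as $\psi_j^{k_j - k_j'} \cdot \prod_j \psi_j^{k_j'}$ for some split with $\sum k_j' = g$ and $k_j' \le k_j$, apply the degree-$g$ relation to the factor $\prod \psi_j^{k_j'}$, and multiply the resulting identity by the remaining $\psi$-monomial $\prod_j \psi_j^{k_j - k_j'}$ of degree $d - g$. Multiplying a boundary class by a monomial of $\psi$ classes pulled back from $\overline{\mathcal{M}}_{g,n}$ distributes the extra $\psi$'s among the vertices of each stable graph; one must re-examine the threshold condition, but since we started below threshold and are free to choose the split so that the added $\psi$-powers at each leg stay within the codimension bound $3g(v) - 3 + n(v)$ while respecting $h + \delta_h^0$ (the total added degree $d - g$ can be parcelled out, and any excess forces a further degeneration that is itself handled by the degree-$g$ case applied on a vertex), the good shape is maintained.

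The main obstacle I anticipate is precisely this bookkeeping of vertex $\psi$-degrees under the forgetful and gluing pushforwards and under multiplication by extra $\psi$ monomials: one has to argue that whenever a pushforward or product would leave a genus-$h$ vertex with a $\psi$-monomial of degree $\ge h + \delta_h^0$, that term can itself be rewritten — by applying an already-established relation of the corollary on that very vertex — into boundary terms that are strictly deeper in the stratification, so that an auxiliary induction on the number of edges (or on the codimension of the deepest stratum) terminates. Setting up that auxiliary induction cleanly, and verifying that it meshes with the outer and inner inductions without circularity, is the technical heart of the proof; the rest is the combinatorics of Pixton's formula together with the string and dilaton equations, which are routine once the inductive scaffolding is in place.
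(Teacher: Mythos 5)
Your overall skeleton — obtain \eqref{eqn:want} from the combination \eqref{eq:cancellation} via \cite[Proposition~13]{clader2023topological}, then induct on the partial order to clear the strictly lower monomials — is the same mechanism the paper invokes with its one-line ``using \eqref{eqn:want} repeatedly''. The genuine gap is your treatment of the monomials at which this induction bottoms out. You claim the base case $\psi_1^g$ is settled because $\pi_*\Omega_{g,1}$ ``by construction already has the form $\psi_1^g=\text{boundary}$''. It does not. In the trivial-graph contribution \eqref{eqn:cont-g-[n]} to $\pi_*\Omega_{g,M}$, besides the first summand there are the type-$(2)$ terms coming from $\mathcal D^{\{2,i\},\,g+1}_{\Gamma_{g,[n]}}$, i.e.\ the factors $\bigl(1-e^{\frac12(a_i+x_{v^*})^2\psi_i}\bigr)/\psi_i$ for $i\ge 2$; these survive even when $M$ does not involve $a_i$, because one may take the pure $x_{v^*}$-part of $(a_i+x_{v^*})^{2d}$. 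For $M=1$ this produces main-stratum monomials $\psi_1^{g-b}\psi_i^{b}$ with $b\ge 1$, and all contributing coefficients have the same sign, so they do not cancel. Such monomials are \emph{not} lower than $\psi_1^g$ in the order (their $i$-th exponent is larger), so your inner inductive hypothesis cannot remove them. The same defect undermines your ``relabel the active legs and carry the $\psi$-free legs along'' step: the combination \eqref{eq:cancellation} is only defined when all $l_j\ge 1$, and merely omitting the inactive variables from $M$ does not prevent $\psi_j$-monomials at the inactive legs from appearing on the main stratum — cancelling precisely these terms is the whole point of the calibrated combination. So as written your induction has no valid starting point for monomials with some vanishing exponents, in particular for $\psi_1^g$ itself; this case has to be handled by a genuinely different device (as in the algorithm of \cite{clader2023topological}), not by a single $\pi_*\Omega_{g,M}$.

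A secondary remark: for $d>g$ you multiply a degree-$g$ relation by extra $\psi$'s and propose to repair over-threshold vertices by applying already-established relations at those vertices, with an auxiliary induction on the number of edges; you candidly leave this unverified. This is indeed where the ``gluing map push-forwards of relations of lower genera'' in the statement enter, but note that the vertex requiring treatment can also have genus $g$ (rational-tail graphs) with fewer special points, so the auxiliary induction must run on $(g,n(v))$ or on $\dim\overline{\mathcal M}_{g(v),n(v)}$, not on the genus alone; and the genus-$0$ vertices picking up $\psi$'s from the corrections must be cleared with genus-$0$ relations to keep the stated shape. None of this is fatal in principle, but together with the base-case error above it means the proposal does not yet constitute a proof.
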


\subsection{Generating series formula for $\Omega_{g, M}$}

In this section, we prove a simplified formula for the coefficients of
$\Omega_{g, M}$.

First, fix integers $(a_2, \dotsc, a_n)$, an integer $r > 0$, a
stable graph $\Gamma=(V,H,g,p,\iota)$ of genus $g$ with $n + 1$ legs, and  an additional assignment $x_v \in \mathbb Z$ for
every $v \in V$.
Given this, we set
\begin{equation*}
  a_1 = -(a_2 + \dotsb + a_n + \sum_{v \in V} x_v).
\end{equation*}
Then, we define a \emph{weighting modulo $r$} on $(\Gamma, x)$ to be a
map
\begin{equation*}
  w\colon H \to \{0, \dotsc, r - 1\}
\end{equation*}
satisfying three properties:
\begin{enumerate}
\item For any $i \in \{1, \dotsc, n\}$ corresponding to a leg $\ell_i$
  of $\Gamma$, we have $w(\ell_i) \equiv a_i \pmod{r}$.
  We set $w(\ell_{n + 1}) = 0$.
\item For any edge $e\in E$ corresponding to two half-edges $h, h'\in H$, we
  have $w(h) + w(h') \equiv 0 \pmod{r}$.
\item For any vertex $v \in V$, we have
  $\sum_{h\in p^{-1}(v)} w(h) \equiv -x_v \pmod{r}$.
\end{enumerate}
\begin{remark}
  It is remarkable that this notion of weighting is very similar to the weightings modulo $r$ in \cite[Definition~5]{bae2023pixton}.    
\end{remark}

Define $\widetilde{\mathcal{D}}_{\Gamma}^r$ to be the class
\begin{equation*}
  \frac 1{r^{h^1(\Gamma)}} \sum_{\substack{w \text{ weighting }\\ \text{mod } r \text{ on }\Gamma}} [\Gamma, \gamma_w] \in \mathcal{S}_{g,n + 1},
\end{equation*}
where
\begin{equation*}
  \gamma_w = \prod_{i=1}^n e^{\frac 12 a_i^2\psi_i} \prod_{(h,h')\in E} \frac{1 - e^{-\frac 12 w(h)w(h')(\psi_h + \psi_{h'})}}{\psi_h + \psi_{h'}}.
\end{equation*}
This class is a polynomial for $r \gg 0$, and we will use
$\mathcal{D}_{\Gamma}$ to denote the constant part of the resulting
polynomial.
The class $\mathcal{D}_{\Gamma}$ is a polynomial in the variables
$a_2, \dotsc, a_n$ and $x_v$ for $v \in V$, and so we will
denote it by
\begin{equation*}
  \mathcal{D}_{\Gamma}(a_2, \dotsc, a_n, \{x_v\}).
\end{equation*}

\begin{proposition}\label{prop:omega_g,m-formula}
  For any monomial $M$ of degree $D \le 2g + 1$, we have
  \begin{align}\label{eqn:omega_g,m-formula}
    \Omega_{g, M}
    = \sum_{\Gamma\in G_{g,n+1}} \sum_{\substack{m\colon V \to \mathbb Z_{\ge 0} \\ \sum_v m(v) = 2g + 1 - D}}
    \frac{(m(v^*) + 1)(2g + 1 - D)!}{|\Aut(\Gamma)|}
    \prod_{v \in V} \frac{(2g(v) - 3 + n(v) + m(v))!}{(2g(v) - 3 + n(v))!} \cdot C_{\Gamma, M, m},
  \end{align}
  where $v^*$ is the vertex containing the $(n + 1)$st leg, and where
  $C_{\Gamma, M, m}$ is the coefficient of
  $M \cdot x_{v^*} \prod_v x_v^{m(v)}$ of the degree-$(g + 1)$ part of
  the class $\mathcal{D}_{\Gamma}(a_2, \dotsc, a_n, \{x_v\})$.
\end{proposition}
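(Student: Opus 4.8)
The plan is to expand $\Omega_{g,M}$ using the definition of Pixton's class, compute the forgetful pushforward, and reorganize the outcome into the sum on the right-hand side of \eqref{eqn:omega_g,m-formula}. Substituting $\mathcal{D}^{g+1}_{g,N}=\sum_{\bar\Gamma\in G_{g,N}}\frac{1}{|\Aut(\bar\Gamma)|}[\bar\Gamma,\gamma_{\bar\Gamma}]$, where $N=n+2g+2-D$, the first observation is that passing to the coefficient of $M\cdot a_{n+1}\cdots a_N$ removes all $\psi$-classes at the legs $n+1,\dots,N$: since $e^{\frac12 a_j^2\psi_j}$ contributes only even powers of $a_j$, its contribution to the coefficient of $a_j^1$ is $1$, so these legs carry no $\psi$, and they enter $\gamma_{\bar\Gamma}$ only through $a_1=-(a_2+\dots+a_N)$ and by shifting the residue congruence at the vertex carrying them. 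I would then group the graphs $\bar\Gamma\in G_{g,N}$ by the pair $(\Gamma,m)$, where $\Gamma\in G_{g,n+1}$ is obtained by deleting the legs $n+2,\dots,N$ and $m\colon V(\Gamma)\to\mathbb{Z}_{\ge0}$ records how many of them lay on each vertex, with $v^*$ the vertex of leg $n+1$. Here one checks that the nonvanishing terms are never destabilized by the deletion: a genus-$0$ vertex carrying a forgotten leg with a $\psi$ class and still contributing nonzero must have at least four special points, hence survives the removal of all its forgotten legs, and an isolated low-genus vertex is excluded by the constraints relating $N$ and $D$.

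The heart of the argument is the identification of the weighting sums. Fixing $(\Gamma,m)$ together with a choice of which forgotten leg sits on which vertex, I claim that for every integer value of $a_2,\dots,a_N$ and every sufficiently large $r$, extending a weighting modulo $r$ on $\Gamma$ by $w(\ell_j)\equiv a_j\pmod{r}$ at the forgotten legs gives a bijection onto the weightings modulo $r$ on the corresponding $\bar\Gamma$; this is immediate from the three defining congruences, and under it the vertex congruence $\sum_h w(h)\equiv 0$ on $\bar\Gamma$ becomes precisely the vertex congruence $\sum_h w(h)\equiv -x_v$ of $\widetilde{\mathcal{D}}_\Gamma^r$ with $x_v:=\sum_{j\text{ on }v}a_j$ (and with $x_{v^*}$ also absorbing $a_{n+1}$). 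Since the edge factors are untouched and $a_1$ matches under this substitution, passing to constant terms in $r$ by Pixton's polynomiality (Theorem~\ref{thm:poly}) and comparing polynomials identifies the $\bar\Gamma$-contribution to $\Omega_{g,M}^{\text{pre}}$ with the corresponding part of $\mathcal{D}_\Gamma$ under $x_v\mapsto\sum_{j\text{ on }v}a_j$. Expanding each $x_v$ and extracting the square-free monomial in the $a_j$ then introduces a factor $m(v)!$ at each $v\neq v^*$ and $(m(v^*)+1)!$ at $v^*$, so in degree $g+1$ this contribution equals $\big(\prod_{v\neq v^*}m(v)!\big)(m(v^*)+1)! = (m(v^*)+1)\prod_v m(v)!$ times the class $C_{\Gamma,M,m}$ (regarded on $\bar\Gamma$ by adjoining the $\psi$-free legs $n+2,\dots,N$).

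It then remains to compute $\Pi_*\big(\Omega_{g,M}^{\text{pre}}\cdot\psi_{n+2}\cdots\psi_N\big)$ vertex by vertex, since forgetting a leg only changes the factor of $\overline{\mathcal{M}}_\Gamma$ at the vertex containing it. For a forgotten leg $\ell_j$ on a vertex $v$, the identity $\psi_{\ell_j}\cdot\psi^k=\psi_{\ell_j}\cdot\pi^*\psi^k$ for the $\psi$ class of any other leg or half-edge of $v$ — valid because $\psi_{\ell_j}$ restricts to $0$ on the divisor $D_{i,\ell_j}$ — turns the $v$-contribution into $\psi_{\ell_j}$ times a pullback, so the projection formula together with $\pi_*\psi_{\ell_j}=\kappa_0$ multiplies it by $2g(v)-2+(\mathrm{valence}-1)$. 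Forgetting the $m(v)$ legs on $v$ one at a time therefore multiplies by $\prod_{k=0}^{m(v)-1}\big(2g(v)-2+n(v)+k\big)=\dfrac{(2g(v)-3+n(v)+m(v))!}{(2g(v)-3+n(v))!}$ and leaves exactly the $v$-factor of $\mathcal{D}_\Gamma$ read on $\overline{\mathcal{M}}_{g(v),n(v)}$.

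Finally I would assemble the factors. For fixed $(\Gamma,m)$ there are $\tfrac{(2g+1-D)!}{\prod_v m(v)!}$ ways to distribute the labels $\{n+2,\dots,N\}$, and since the summand of \eqref{eqn:omega_g,m-formula} depends only on $(\Gamma,m)$, the orbit--stabilizer lemma gives $\sum_{\bar\Gamma\mapsto(\Gamma,m)}\frac{1}{|\Aut(\bar\Gamma)|}=\frac{1}{|\Aut(\Gamma)|}\cdot\frac{(2g+1-D)!}{\prod_v m(v)!}$. Multiplying this by $(m(v^*)+1)\prod_v m(v)!$ from the coefficient extraction and by the rising factorials from the pushforward, the products $\prod_v m(v)!$ cancel and leave precisely the coefficient $\dfrac{(m(v^*)+1)(2g+1-D)!}{|\Aut(\Gamma)|}\prod_v\dfrac{(2g(v)-3+n(v)+m(v))!}{(2g(v)-3+n(v))!}$ in front of $C_{\Gamma,M,m}$, which is \eqref{eqn:omega_g,m-formula}. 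I expect the real difficulty to lie in the second step, namely making the weighting-sum identification precise — in particular reconciling the formal variables $a_j$ with the ``$\bmod r$'' conditions and tracking how they feed into both $a_1$ and the variables $x_v$ — while the forgetful pushforward and the combinatorial bookkeeping, though somewhat tedious, are essentially mechanical.
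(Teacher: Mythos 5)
Your proposal is correct and follows essentially the same route as the paper's proof: decompose Pixton's class graph by graph, group the $N$-pointed graphs by their image in $G_{g,n+1}$ together with the distribution of forgotten legs, substitute $x_v=\sum_{i\in I_v}a_i$ and use the identity $[p(x_1+\dots+x_m)]_{x_1\cdots x_m}=m!\,[p(x)]_{x^m}$, push forward by repeated use of the dilaton equation, and finish with the orbit--stabilizer and multinomial counting; you merely make explicit some points the paper leaves implicit (the bijection of weightings, the absence of $\psi$ classes at forgotten legs, the stability check). Only note that your stability remark is cleaner as a dimension count: a genus-$0$ vertex with $f$ forgotten legs, each carrying a $\psi$, contributes nonzero only if it has at least $f+3$ special points, which is what guarantees it survives deletion of all of them.
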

\begin{proof}
  In the computation of $\Omega_{g, M}$, we first split $\mathcal D_{g, N}^{g+1}(a_1, \dotsc, a_N)$ into the contributions of each stable graph $\widetilde\Gamma$
  \begin{equation*}
    \mathcal D_{g, N}^{g+1}(a_1, \dotsc, a_N)
    = \sum_{\widetilde\Gamma \in G_{g, N}} \frac 1{|\Aut(\widetilde\Gamma)|}\mathcal D_{\widetilde\Gamma}(a_1, \dotsc, a_N).
  \end{equation*}
  Note that under the map $\Pi$ forgetting the last $2g + 1 - D$ markings $\{n+2,\dots,N\}$, each graph $\widetilde\Gamma \in G_{g, N}$ maps to a graph $\Gamma \in G_{g, n + 1}$.
  In the other direction, because of the multiplication by $\psi_{n + 2} \cdot \dots \cdot \psi_N$ in the definition of $\Omega_{g, M}$, given a graph $\Gamma \in G_{g, n + 1}$ all graphs $\widetilde\Gamma \in G_{g, N}$ with nonzero contributions that map to it are obtained by adding additional legs to the vertices of $\Gamma$.

  Using this, we may write $\Omega_{g, M}$ as a sum over graphs $\Gamma \in G_{g, n + 1}$.
  Given $\Gamma \in G_{g,n+1}$ and a choice of set partition $\{n+2, \ldots, N\} = \sqcup_{v\in V} I_v$, we denote by $\widetilde{\Gamma}^{\sqcup_{v\in V}I_v} \in G_{g, N}$ the graph obtained by adding $I_v$ additional legs on each vertex $v$ of $\Gamma$.
  Further, define
  \begin{equation}
    \label{eq:xv-def}
    x_v =
    \begin{cases}
      \sum_{i \in I_v} a_i & \text{if } v \neq v^* \\
      a_{n + 1} + \sum_{i \in I_v} a_i & \text{if } v = v^*.
    \end{cases}
  \end{equation}
  Note that the multiplication by $\psi_{n + 2} \cdot \dots \cdot \psi_N$
  in the definition of $\Omega_{g, M}$ ensures that
  we can apply the dilaton equation on each vertex repeatedly to compute the
  $\Pi$-push-forward of $\Omega_{g,M}^{\text{pre}}\cdot\prod_{i=n+2}^{N}\psi_{i}$.
  Putting this together, we arrive at
  \begin{equation}
    \label{omega_g,m-formula-part1}
    \Omega_{g, M}
    = \sum_{\Gamma\in G_{g,n+1}} \sum_{\{n+2, \ldots, N\} = \sqcup_{v\in V} I_v}
    \frac 1{|\Aut(\widetilde\Gamma^{\sqcup_{v\in V}I_v})|}
    \prod_{v \in V} \frac{(2g(v) - 3 + n(v) + |I_v|)!}{(2g(v) - 3 + n(v))!} \cdot C_{\Gamma, M, I},
  \end{equation}
  where $C_{\Gamma, M, I}$ is the coefficient of $M \cdot a_{n + 1} \cdots a_N$ of the degree $(g + 1)$-part of the class $\mathcal{D}_{\Gamma}(a_2, \dotsc, a_n, \{x_v\})$, and the $x_v$ are as in \eqref{eq:xv-def}.

  Because of the identity
  \begin{equation*}
    [p(x_1 + \dotsb + x_m)]_{x_1 \cdot \dotsb \cdot x_m}
    = m! [p(x)]_{x^m}
  \end{equation*}
  for any univariate polynomial $p(x)$, we see that
  \begin{equation}
    \label{omega_g,m-formula-part2}
    C_{\Gamma, M, I} = (m(v^*) + 1)! \prod_{v \neq v^*} m(v)! \cdot C_{\Gamma, M, m},
  \end{equation}
  writing $m(v) = |I_v|$ for each vertex $v$ on $\Gamma$.

  We may then conclude the proof of the proposition by combining
  \eqref{omega_g,m-formula-part1} and \eqref{omega_g,m-formula-part2},
  and making two further observations.
  First, the orbit-stabilizer theorem implies that given a graph
  $\widetilde\Gamma \in G_{g, N}$ (with nonzero contribution) and
  corresponding graph $\Gamma \in G_{g, n + 1}$, there are exactly
  \begin{equation*}
    \frac{|\Aut(\Gamma)|}{|\Aut(\widetilde\Gamma)|},
  \end{equation*}
  many choices of set partitions
  $\{n+2, \ldots, N\} = \sqcup_{v\in V} I_v$ such that the graph
  $\widetilde\Gamma^{\sqcup_{v\in V}I_v}$ formed from $\Gamma$ is
  isomorphic to $\widetilde\Gamma$.
  Second, given a function $m\colon V \to \mathbb{Z}_{\ge 0}$, there
  are
  \begin{equation*}
    \frac{(2g + 1 - D)!}{\prod_v m(v)!}
  \end{equation*}
  many ways to choose $I_v$ such that $m(v) = |I_v|$.
\end{proof}
\begin{remark}\label{rem:contr-asimple-loop}
  The contribution of the dual graph on $\overline{\mathcal{M}}_{g,n+1}$ with a simple loop in equation~\ref{eqn:omega_g,m-formula} is trivial.
  In fact, the corresponding coefficient  of $M\cdot x_{v^*}\prod_{v}x_v^{m(v)}$ of the class $\mathcal{D}_{\Gamma}(a_2,\dots ,a_n,\{x_v\})$ has degree
strictly bigger than $g+1$.
\end{remark}
\subsection{The push-forward formula of $\pi_*(\Omega_{g,M})$}

We now compute the $\pi$-push-forward of the formula $\Omega_{g,M}$ in Proposition~\ref{prop:omega_g,m-formula} from the previous
section.

First, note that given a dual graph $\Gamma$ on
$\overline{\mathcal{M}}_{g,n}$, there are three types of dual graphs
$\Gamma'$ of $\overline{\mathcal{M}}_{g,n + 1}$ mapping to it:
\begin{enumerate}
\item $\Gamma'$ is obtained from $\Gamma$ by adding an extra $(n+1)$-th  leg to
  one of the vertices of $\Gamma$.
\item $\Gamma'$ is obtained from $\Gamma$ by replacing the $i$-th leg
  by an edge connected a new genus zero vertex with the $i$-th and
  $(n+1)$-st leg.
\item $\Gamma'$ is obtained from $\Gamma$ by dividing an edge into two
  edges, and putting the $i$-th leg on the resulting new genus zero
  vertex.
\end{enumerate}
% \Xin{For type (1), under push-forward $\pi_*$, multiplying $\sum_{h: v(h)=v^*}\frac{1}{\psi_h}$. For type (2), under push-forward $\pi_*$, replacing $\psi_{h(e)}$ by $\psi_i$. For type (3), under push-forward $\pi_*$, removing the vertex where $n+1$ marking lies on, contracting the two edges into one.}
Fix integers $(a_2, \dotsc, a_n)$, an integer $r > 0$,  a
stable graph $\Gamma=(V,H,g,p,\iota)$ of genus $g$ with $n$ legs and 
an  assignment $\{x_v \in \mathbb Z: v \in V\}$.
Given this, we set
\begin{equation*}
  a_1 = -(a_2 + \dotsb + a_n + \sum_{v \in V} x_v).
\end{equation*}
Then, we define a \emph{weighting modulo $r$} on $(\Gamma, x)$ to be a
map
\begin{equation*}
  w\colon H \to \{0, \dotsc, r - 1\}
\end{equation*}
satisfying three properties:
\begin{enumerate}
\item For any $i \in \{1, \dotsc, n\}$ corresponding to a leg $\ell_i$
  of $\Gamma$, we have $w(\ell_i) \equiv a_i \pmod{r}$.
\item For any edge $e\in E$ corresponding to two half-edges $h, h'\in H$, we
  have $w(h) + w(h') \equiv 0 \pmod{r}$.
\item For any vertex $v \in V$, we have
  $\sum_{h\in p^{-1}(v)} w(h) \equiv -x_v \pmod{r}$.
\end{enumerate}
Motivated by three operations from $\Gamma'$ to $\Gamma$ as above mentioned , we define
three different tautological classes as follows: 
\begin{enumerate}[label=(\roman*)]
\item $\widetilde{\mathcal{D}}_{\Gamma}^r\{1,v^*\}$ to be the class
\begin{equation*}
  \frac 1{r^{h^1(\Gamma)}} \sum_{\substack{w \text{ weighting }\\ \text{mod } r \text{ on }(\Gamma,x)}} [\Gamma, \gamma_w\{1,v^*\}] \in \mathcal{S}_{g,n},
\end{equation*}
where
\begin{equation*}
  \gamma_w\{1,v^*\} =
\left(\sum_{h\in H:p(h)=v^*}\frac{e^{\frac{1}{2} a_h^2\psi_h}-1}{\psi_{h}}\prod_{i=1:i\neq h}^{n} e^{\frac{1}{2} a_i^2\psi_i}\right)
\prod_{(h,h')\in E} \frac{1 - e^{-\frac 12 w(h)w(h')(\psi_h + \psi_{h'})}}{\psi_h + \psi_{h'}}.
\end{equation*}
\item $\widetilde{\mathcal{D}}_{\Gamma}^r\{2,i\}$ to be the class
\begin{equation*}
  \frac 1{r^{h^1(\Gamma)}} \sum_{\substack{w \text{ weighting }\\ \text{mod } r \text{ on }(\Gamma,x)}} [\Gamma, \gamma_w\{2,i\}] \in \mathcal{S}_{g,n},
\end{equation*}
where
\begin{equation*}
  \gamma_w\{2,i\} =\frac{1-e^{\frac{1}{2}(a_i+x_{v^*})^2\psi_i}}{\psi_i}\prod_{j=1: j\neq i}^n e^{\frac 12 a_j^2\psi_j} \prod_{(h,h')\in E} \frac{1 - e^{-\frac 12 w(h)w(h')(\psi_h + \psi_{h'})}}{\psi_h + \psi_{h'}}.
\end{equation*}
\item $\widetilde{\mathcal{D}}_{\Gamma}^r\{3,e\}$ to be the class
\begin{equation*}
  \frac 1{r^{h^1(\Gamma)}} \sum_{\substack{w \text{ weighting }\\ \text{mod } r \text{ on }(\Gamma,x)}} [\Gamma, \gamma_w\{3,e\}] \in \mathcal{S}_{g,n},
\end{equation*}
where
\begin{align*}
\gamma_w\{3,e\}=&\prod_{j=1}^n e^{\frac 12 a_j^2\psi_j} \cdot \frac{1 - e^{-\frac{1}{2} w(h(e))(r-w(h(e)))(\psi_{h(e)} )}}{\psi_{h(e)}}
\frac{1 - e^{-\frac{1}{2}(r-w(h'(e)))w(h'(e)) ( \psi_{h'(e)})}}{ \psi_{h'(e)}}
\\&\cdot\prod_{(h,h')\in E-\{e\}} \frac{1 - e^{-\frac 12 w(h)w(h')(\psi_h + \psi_{h'})}}{\psi_h + \psi_{h'}}    
\end{align*}
where on the special edge $e$, we modify condition (2) to be  
\[w(h(e))+w(h'(e))\equiv x_{v^*}\mod r.\]
The condition (2) is unchanged on the other edges $E-\{e\}$. 
\end{enumerate}

These three kinds of  classes are all polynomials for $r \gg 0$, and we will use
$\mathcal{D}_{\Gamma}^{\{1,v^*\}}$, $\mathcal{D}_{\Gamma}^{\{2,i\}}$ and $\mathcal{D}_{\Gamma}^{\{3,e\}}$  to denote the constant part of the resulting
polynomials respectively.
These classes $\mathcal{D}^{\{1,v^*\}}_{\Gamma}$, $\mathcal{D}^{\{2,i\}}_{\Gamma}$ and $\mathcal{D}^{\{3,e\}}_{\Gamma}$ are all polynomials in the variables
$a_2, \dotsc, a_n$ and $x_v$ for $v \in V\cup\{v^*\}$, and so we will
denote them respectively by
\begin{equation*}
  \mathcal{D}^{\{1,v^*\}}_{\Gamma}(a_2, \dotsc, a_n, \{x_v\}),\,\, \mathcal{D}^{\{2,i\}}_{\Gamma}(a_2, \dotsc, a_n, \{x_v\}),\,\, \mathcal{D}^{\{3,e\}}_{\Gamma}(a_2, \dotsc, a_n, \{x_v\}).
\end{equation*}
Furthermore, let
\begin{equation*}
  \mathcal{D}^{\{1,v^*\}, d}_{\Gamma}(a_2, \dotsc, a_n, \{x_v\}),\,\, \mathcal{D}^{\{2,i\}, d}_{\Gamma}(a_2, \dotsc, a_n, \{x_v\}),\,\, \mathcal{D}^{\{3,e\}, d}_{\Gamma}(a_2, \dotsc, a_n, \{x_v\}),
\end{equation*}
be the corresponding degree $d$ components.

% Under the 3 types, 
% \begin{align*}
% &\left[\Gamma', \gamma_w = \prod_{i=1}^n e^{\frac 12 a_i^2\psi_i} \prod_{(h,h')\in E} \frac{1 - e^{-\frac 12 w(h)w(h')(\psi_h + \psi_{h'})}}{\psi_h + \psi_{h'}}\right]  
% \\&\mapsto
% \left[\Gamma,e^{\sum_{i(v)=v*}\frac{1}{2} a_i^2\psi_i}
% \left(\sum_{\sum_{i(v)=v^*}}\psi_{i}^{-1}\right)\prod_{v\neq v*} e^{\sum_{v(i)=v}\frac{1}{2} a_i^2\psi_i}
% \prod_{(h,h')\in E} \frac{1 - e^{-\frac 12 w(h)w(h')(\psi_h + \psi_{h'})}}{\psi_h + \psi_{h'}}\right]  
% \end{align*}
% and
% \begin{align*}
% &\left[\Gamma', \gamma_w = \prod_{i=1}^n e^{\frac 12 a_i^2\psi_i} \prod_{(h,h')\in E} \frac{1 - e^{-\frac 12 w(h)w(h')(\psi_h + \psi_{h'})}}{\psi_h + \psi_{h'}}\right]  
% \\&\mapsto^{(2)}
% \left[\Gamma,\frac{1-e^{\frac{1}{2}(a_i+a_{n+1}+x_{v^*})^2\psi_i}}{\psi_i}\prod_{j=1: j\neq i}^n e^{\frac 12 a_j^2\psi_j} \prod_{(h,h')\in E} \frac{1 - e^{-\frac 12 w(h)w(h')(\psi_h + \psi_{h'})}}{\psi_h + \psi_{h'}}\right]  
% \end{align*}
% and
% \begin{align*}
% &\left[\Gamma', \gamma_w = \prod_{i=1}^n e^{\frac 12 a_i^2\psi_i} \prod_{(h,h')\in E} \frac{1 - e^{-\frac 12 w(h)w(h')(\psi_h + \psi_{h'})}}{\psi_h + \psi_{h'}}\right]  
% \\&\mapsto^{(3)}
% \left[\Gamma,\prod_{j=1}^n e^{\frac 12 a_j^2\psi_j} \frac{1 - e^{-\frac{1}{2} w_1(r-w_1)(\psi_{h(e)} )}}{\psi_{h(e)}}
% \frac{1 - e^{-\frac{1}{2}(r-w_2)w_2 ( \psi_{h'(e)})}}{ \psi_{h'(e)}}\prod_{(h,h')\in E-\{e\}} \frac{1 - e^{-\frac 12 w(h)w(h')(\psi_h + \psi_{h'})}}{\psi_h + \psi_{h'}}\right]  
% \end{align*}

\begin{proposition}
\label{prop:omega_g,m-formula-pi-pushforward}
For any monomial $M$ of degree $D\leq 2g+1$, the push-forward of $\Omega_{g,M}$ under $\pi$ is given by
\begin{small}
\begin{align*}
&\pi_* \Omega_{g, M}
\\=&
\sum_{\Gamma\in G_{g,n}}
\Bigg(\sum_{v^*\in V(\Gamma)}\sum_{\substack{m\colon V(\Gamma) \to \mathbb Z_{\ge 0} \\ \sum_v m(v) = 2g + 1 - D}}
    \frac{(m(v^*) + 1)(2g + 1 - D)!}{|\Aut(\Gamma)|}
    \frac{(2g(v^*) - 2 + n(v^*) + m(v^*))!}{(2g(v^*) - 2 + n(v^*))!}
    \\&\hspace{130pt}\cdot\prod_{v \in V(\Gamma)-\{v^*\}} \frac{(2g(v) - 3 + n(v) + m(v))!}{(2g(v) - 3 + n(v))!}C^{\{1,v^*\}}_{\Gamma,M,m}
   \\&\hspace{35pt}+\sum_{i=1}^{n}\sum_{\substack{m\colon V(\Gamma)\sqcup \{v^*\}\rightarrow\mathbb{Z}_{\geq0}\\ \sum_{v}m(v)=2g+1-D}}\frac{(m(v^*) + 1)!(2g + 1 - D)!}{|\Aut(\Gamma)|}
    \prod_{v \in V(\Gamma)} \frac{(2g(v) - 3 + n(v) + m(v))!}{(2g(v) - 3 + n(v))!}C^{\{2,i\}}_{\Gamma,M,m}
    \\&\hspace{35pt}+\sum_{e\in E(\Gamma)}\sum_{\substack{m\colon V(\Gamma)\sqcup \{v^*\}\rightarrow\mathbb{Z}_{\geq0}\\ \sum_{v}m(v)=2g+1-D}}\frac{(m(v^*) + 1)!(2g + 1 - D)!}{|\Aut(\Gamma)|}
    \prod_{v \in V(\Gamma)} \frac{(2g(v) - 3 + n(v) + m(v))!}{(2g(v) - 3 + n(v))!}C^{\{3,e\}}_{\Gamma,M,m}\Bigg)
\end{align*} 
\end{small}
where
\begin{align*}
  C^{\{1,v^*\}}_{\Gamma, M, m} &= [\mathcal{D}^{\{1,v^*\}, g+1}_{\Gamma}(a_2, \dotsc, a_n, \{x_v\})]_{M \cdot x_{v^*} \prod_v x_v^{m(v)}}, \\
  C^{\{2,i\}}_{\Gamma, M, m} &= [\mathcal{D}^{\{2,i\}, g+1}_{\Gamma}(a_2, \dotsc, a_n, \{x_v\})]_{M \cdot x_{v^*} \prod_v x_v^{m(v)}}, \\
  C^{\{3,e\}}_{\Gamma, M, m} &= [\mathcal{D}^{\{3,e\}, g+1}_{\Gamma}(a_2, \dotsc, a_n, \{x_v\})]_{M \cdot x_{v^*} \prod_v x_v^{m(v)}}.
\end{align*}
\end{proposition}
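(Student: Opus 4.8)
The plan is to compute $\pi_* \Omega_{g,M}$ directly by pushing forward the generating-series formula for $\Omega_{g,M}$ established in Proposition~\ref{prop:omega_g,m-formula}, organizing the computation according to the three types of dual graphs $\Gamma'$ on $\overline{\mathcal{M}}_{g,n+1}$ that map to a fixed dual graph $\Gamma$ on $\overline{\mathcal{M}}_{g,n}$, as enumerated above the statement. For each $\Gamma' \in G_{g,n+1}$ we already have from Proposition~\ref{prop:omega_g,m-formula} its contribution to $\Omega_{g,M}$ as a sum over functions $m\colon V(\Gamma') \to \mathbb{Z}_{\ge 0}$ with $\sum_v m(v) = 2g+1-D$, with a combinatorial prefactor and the coefficient $C_{\Gamma', M, m}$ extracted from $\mathcal{D}_{\Gamma'}$. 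The task is then to push each such term forward under $\pi\colon \overline{\mathcal{M}}_{g,n+1}\to \overline{\mathcal{M}}_{g,n}$ using the string and dilaton equations, and to rewrite the result in terms of data on $\Gamma$ and the three modified classes $\mathcal{D}^{\{1,v^*\}}_\Gamma$, $\mathcal{D}^{\{2,i\}}_\Gamma$, $\mathcal{D}^{\{3,e\}}_\Gamma$.

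First I would handle Type~(1), where $\Gamma'$ is $\Gamma$ with an extra leg $\ell_{n+1}$ attached to a chosen vertex $v^*$. The $\pi$-push-forward amounts to forgetting $\ell_{n+1}$, which on the vertex $v^*$ is handled by the dilaton/string equations: the factor $e^{\frac12 a_{n+1}^2 \psi_{n+1}}$ in $\gamma_w$ for $\Gamma'$ becomes, after push-forward, the operator $\sum_{h\in p^{-1}(v^*)} \frac{e^{\frac12 a_h^2 \psi_h}-1}{\psi_h}$ acting on the remaining $\psi$-classes at $v^*$ together with the shift in the valence factorial — this is exactly the definition of $\gamma_w\{1,v^*\}$, and it accounts for why the valence factorial prefactor at $v^*$ is $(2g(v^*)-2+n(v^*)+m(v^*))!/(2g(v^*)-2+n(v^*))!$ rather than the $(\cdots -3 + \cdots)$ version at the other vertices. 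Setting $x_v$ via the analogue of \eqref{eq:xv-def} and using the same orbit–stabilizer bookkeeping and multinomial count of set partitions as in the proof of Proposition~\ref{prop:omega_g,m-formula} yields the first block of the claimed formula. For Types~(2) and~(3), $\Gamma'$ acquires a new genus-zero vertex $v^*$ carrying legs $i$ and $n+1$ (Type~2) or carrying leg $n+1$ and sitting in the middle of a subdivided edge $e$ (Type~3). In both cases forgetting $\ell_{n+1}$ contracts this genus-zero vertex: its $\psi$-class factors get integrated out, producing $\frac{1-e^{\frac12(a_i+x_{v^*})^2\psi_i}}{\psi_i}$ (Type~2) or the split pair of edge factors with the modified weighting condition $w(h(e))+w(h'(e))\equiv x_{v^*}$ (Type~3) — these are precisely the definitions of $\gamma_w\{2,i\}$ and $\gamma_w\{3,e\}$. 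The role of the parameter $x_{v^*}$ is that it records the residual weight $w(\ell_{n+1})$-data flowing through the contracted vertex; extracting the coefficient of $M\cdot x_{v^*}\prod_v x_v^{m(v)}$ reproduces $C^{\{2,i\}}_{\Gamma,M,m}$ and $C^{\{3,e\}}_{\Gamma,M,m}$.

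The main obstacle I anticipate is the careful matching of automorphism factors and combinatorial multiplicities across the passage from $\Gamma'$ to $\Gamma$, especially in Type~(3): subdividing a single edge of $\Gamma$ can produce a graph $\Gamma'$ whose automorphism group differs from $|\Aut(\Gamma)|$ in a way sensitive to whether $e$ is a loop, whether the two half-edges of $e$ are interchanged by an automorphism, and whether multiple edges between the same pair of vertices are permuted. One must verify that summing over $e\in E(\Gamma)$ with the uniform prefactor $(m(v^*)+1)!(2g+1-D)!/|\Aut(\Gamma)|$ correctly reconstitutes $\sum_{\Gamma'} \tfrac{1}{|\Aut(\Gamma')|}(\cdots)$, i.e.\ that the over- or under-counting is exactly absorbed. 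The standard remedy — as in the proof of Proposition~\ref{prop:omega_g,m-formula} — is an orbit–stabilizer argument: the number of ways to realize a fixed $\Gamma'$ by subdividing an edge of $\Gamma$ (resp.\ attaching a leg, resp.\ breaking off a leg) is $|\Aut(\Gamma)|/|\Aut(\Gamma')|$ after accounting for the labeling of the new leg $\ell_{n+1}$, which converts the $\Gamma'$-indexed sum with $1/|\Aut(\Gamma')|$ weights into a $\Gamma$-indexed sum with $1/|\Aut(\Gamma)|$ weights and an explicit multiplicity (the $(m(v^*)+1)$ vs.\ $(m(v^*)+1)!$ discrepancy between the Type~1 block and the Type~2,3 blocks is itself a symptom of this, reflecting that in Type~1 the marking $n+1$ lands on an old vertex whereas in Types~2,3 it lands on the genuinely new genus-zero vertex $v^*$ which is then tracked separately in the $m$-domain $V(\Gamma)\sqcup\{v^*\}$). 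Once these bookkeeping identities are in place, combining the three blocks gives the stated formula; everything else is the same dilaton-equation manipulation and polynomiality-in-$r$ argument already used for $\Omega_{g,M}$ itself.
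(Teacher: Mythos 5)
Your proposal follows essentially the same route as the paper's proof: start from Proposition~\ref{prop:omega_g,m-formula}, split $\pi_*\Omega_{g,M}$ according to the three types of graphs $\Gamma'\in G_{g,n+1}$ lying over a fixed $\Gamma\in G_{g,n}$, identify the push-forwards $\pi_*C_{\Gamma',M,m}$ with the coefficients of the modified classes $\mathcal{D}^{\{1,v^*\}}_\Gamma$, $\mathcal{D}^{\{2,i\}}_\Gamma$, $\mathcal{D}^{\{3,e\}}_\Gamma$, and re-index the sums (by $v^*\in V(\Gamma)$, by $i$, by $e\in E(\Gamma)$) so that the orbit--stabilizer bookkeeping converts $1/|\Aut(\Gamma')|$ into $1/|\Aut(\Gamma)|$, exactly as the paper does (including the merging of the new genus-$0$ vertex's factor $m(v^*)!$ with $(m(v^*)+1)$ in Types~2 and~3). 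The only quibble is cosmetic: in $\gamma_w$ for a Type-1 graph there is no factor $e^{\frac12 a_{n+1}^2\psi_{n+1}}$ (the new leg carries weight $0$ and no exponential); the operator $\sum_{h}\frac{e^{\frac12 a_h^2\psi_h}-1}{\psi_h}$ arises from the string-equation push-forward applied to the existing factors at $v^*$, which does not affect your argument.
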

\begin{proof}
  By Proposition~\ref{prop:omega_g,m-formula}, we have
  \begin{equation*}
    \pi_* \Omega_{g, M} = \sum_{\Gamma'\in G_{g,n+1}} \Cont_{\Gamma'},
  \end{equation*}
  where
  \begin{equation*}
    \Cont_{\Gamma'} = \sum_{\substack{m\colon V(\Gamma') \to \mathbb Z_{\ge 0} \\ \sum_v m(v) = 2g + 1 - D}}
    \frac{(m(v^*) + 1)(2g + 1 - D)!}{|\Aut(\Gamma')|}
    \prod_{v \in V(\Gamma')} \frac{(2g(v) - 3 + n(v) + m(v))!}{(2g(v) - 3 + n(v))!} \cdot \pi_*C_{\Gamma', M, m}.
  \end{equation*}
  As we have discussed, for any $\Gamma \in G_{g, n}$, there are three types of stable graphs $\Gamma' \in G_{g, n+1}$ mapping to $\Gamma$ with respect to $\pi$.
  This allows to divide the above sum into three parts:
  \begin{equation*}
    \pi_* \Omega_{g, M} = \sum_{\Gamma\in G_{g,n}} (\Cont_{\Gamma}^1 + \Cont_{\Gamma}^2 + \Cont_{\Gamma}^3),
  \end{equation*}
  where
\begin{small}
  \begin{align*}
    \Cont_\Gamma^1 =
&
\sum_{\Gamma'\mapsto^{(1)}\Gamma}\sum_{\substack{m\colon V(\Gamma') \to \mathbb Z_{\ge 0} \\ \sum_v m(v) = 2g + 1 - D}}
    \frac{(m(v^*) + 1)(2g + 1 - D)!}{|\Aut(\Gamma')|}
    \prod_{v \in V(\Gamma')} \frac{(2g(v) - 3 + n(v) + m(v))!}{(2g(v) - 3 + n(v))!} \cdot \pi_*C_{\Gamma', M, m}
\\=&
\sum_{v^*\in V(\Gamma)}\sum_{\substack{m\colon V(\Gamma) \to \mathbb Z_{\ge 0} \\ \sum_v m(v) = 2g + 1 - D}}
    \frac{(m(v^*) + 1)(2g + 1 - D)!}{|\Aut(\Gamma)|}
   \frac{(2g(v^*) - 2 + n(v^*) + m(v^*))!}{(2g(v^*) - 2 + n(v^*))!} 
\\&\hspace{120pt}\cdot\prod_{v \in V(\Gamma)-\{v^*\}} \frac{(2g(v) - 3 + n(v) + m(v))!}{(2g(v) - 3 + n(v))!}C^{\{1,v^*\}}_{\Gamma,M,m},
    \end{align*}    
\end{small}
\begin{small}
  \begin{align*}
    \Cont_\Gamma^2 =
&
\sum_{\Gamma'\mapsto^{(2)}\Gamma}\sum_{\substack{m\colon V(\Gamma') \to \mathbb Z_{\ge 0} \\ \sum_v m(v) = 2g + 1 - D}}
    \frac{(m(v^*) + 1)(2g + 1 - D)!}{|\Aut(\Gamma')|}
    \prod_{v \in V(\Gamma')} \frac{(2g(v) - 3 + n(v) + m(v))!}{(2g(v) - 3 + n(v))!} \cdot \pi_*C_{\Gamma', M, m}
\\=&
\sum_{i=1}^{n}\sum_{\substack{m\colon V(\Gamma)\sqcup \{v^*\}\rightarrow\mathbb{Z}_{\geq0}\\ \sum_{v}m(v)=2g+1-D}}\frac{(m(v^*) + 1)(2g + 1 - D)!}{|\Aut(\Gamma)|}
 \frac{m(v^*)!}{0!}   
\\&\hspace{120pt}\cdot\prod_{v \in V(\Gamma)} \frac{(2g(v) - 3 + n(v) + m(v))!}{(2g(v) - 3 + n(v))!}C^{\{2,i\}}_{\Gamma,M,m},
\end{align*}    
\end{small}
and
\begin{small}
  \begin{align*}
    \Cont_\Gamma^3 =
&
\sum_{\Gamma'\mapsto^{(3)} \Gamma}\sum_{\substack{m\colon V(\Gamma') \to \mathbb Z_{\ge 0} \\ \sum_v m(v) = 2g + 1 - D}}
    \frac{(m(v^*) + 1)(2g + 1 - D)!}{|\Aut(\Gamma')|}
    \prod_{v \in V(\Gamma')} \frac{(2g(v) - 3 + n(v) + m(v))!}{(2g(v) - 3 + n(v))!} \cdot \pi_*C_{\Gamma', M, m}
\\=&
    \sum_{e\in E(\Gamma)}\sum_{\substack{m\colon V\sqcup \{v^*\}\rightarrow\mathbb{Z}_{\geq0}\\ \sum_{v}m(v)=2g+1-D}}\frac{(m(v^*) + 1)(2g + 1 - D)!}{|\Aut(\Gamma)|}
    \frac{m(v^*)!}{0!}\\&\hspace{120pt}\cdot\prod_{v \in V(\Gamma)} \frac{(2g(v) - 3 + n(v) + m(v))!}{(2g(v) - 3 + n(v))!}C^{\{3,e\}}_{\Gamma,M,m}
\end{align*}    
\end{small}
where $\Gamma'\mapsto^{(i)}\Gamma$ stands for the type $(i)$ graphs $\Gamma' \in G_{g, n+1}$ mapping to $\Gamma$ under $\pi$ for $i=1,2,3$.
\end{proof}

\section{The coefficient of the bouquet class in topological recursion relations}
\label{sec:coeff-bou}

In this section, we compute the coefficient of the bouquet class in the degree-$g$ topological recursion relations on $\overline{\mathcal{M}}_{g,n}$.
First, we compute contributions of stable graphs to $\pi_{*}\Omega_{g,n}$, in particular, the trivial graph $\Gamma_{g,[n]}$ of genus $g$ with $n$ markings, and the dual graph $\Gammaloop_{g-1, 1, [n]}$ with a single genus $g-1$ vertex, a loop and all of the markings.
Second, we establish a crucial identity between these two contributions.
From there, the algorithm  \eqref{eq:cancellation} determines the coefficient of the bouquet class inductively, and the identities will lead the proof of Theorem~\ref{thm:coeff-bouquet}.

\subsection{Contributions of stable graphs to $\pi_* \Omega_{g, M}$}

Consider the trivial graph $\Gamma_{g,[n]}$ of genus $g$ with $n$ markings, and the dual graph $\Gammaloop_{g-1, 1, [n]}$ with a single genus $g-1$ vertex, a loop and all of the markings.
Furthermore, fix a monomial $M = \psi_2^{b_2} \dots \psi_n^{b_n}$ of degree $D$.
For each of these graphs, we will consider the corresponding contribution to $\pi_* \Omega_{g, M}$ in Proposition~\ref{prop:omega_g,m-formula-pi-pushforward}, and simplify it.

The contribution of the trivial graph $\Gamma_{g,[n]}$ is
\begin{align*}
 &\Cont_{\Gamma_{g,[n]}}\pi_*\Omega_{g,M}\\= & \frac{(2g + 2 - D)!(4g - 1 + n - D)!}{(2g - 2 + n)!} \cdot [\mathcal{D}^{\{1,v^*\}, g+1}_{\Gamma_{g, [n]}}(a_2, \dotsc, a_n, x_v)]_{M \cdot x_v^{2g + 2 - D}} \\
  &+\sum_{i=1}^{n}\sum_{m + m^* = 2g + 1 - D}\frac{(m^* + 1)!(2g + 1 - D)!(2g - 3 + n + m)!}{(2g - 3 + n)!}
\\&\hspace{100pt}\cdot  [\mathcal{D}^{\{2,i\}, g+1}_{\Gamma_{g, [n]}}(a_2, \dotsc, a_n, x_v, x_{v^*})]_{M \cdot x_v^m x_{v^*}^{m^* + 1}}
\end{align*}
where $v$ is the single vertex of $\Gamma_{g,[n]}$.
Note that $a_1 + x_{v^*} = -\sum_{i = 2}^n a_i - x_v$, and hence does not depend on $x_{v^*}$.
Therefore, the $i = 1$-summand in the above expression vanishes.
Unfolding the definition of $\mathcal{D}^{\{2,i\}, g+1}_{\Gamma_{g, [n]}}$, we expand the contribution to
\begin{align}\label{eqn:cont-g-[n]}
\Cont_{\Gamma_{g,[n]}}\pi_*\Omega_{g,M}=&\frac{(2g + 2 - D)!(4g - 1 + n - D)!}{(2g - 2 + n)!} \cdot 
    \left[\Gamma_{g, [n]}, \pi_* \prod_{i=1}^{n} e^{\frac{1}{2} a_i^2\psi_i}\right]^{g + 1}_{M \cdot x_v^{2g + 2 - D}} \nonumber
    \\&+ \sum_{i=2}^{n}\sum_{m + m^* = 2g + 1 - D}\frac{(m^* + 1)!(2g + 1 - D)!(2g - 3 + n + m)!}{(2g - 3 + n)!}
\nonumber\\&\hspace{70pt}\cdot\left[\Gamma_{g, [n]}, \frac{1-e^{\frac{1}{2}(a_i+x_{v^*})^2\psi_i}}{\psi_i}\prod_{j=1: j\neq i}^n e^{\frac 12 a_j^2\psi_j}\right]^{g + 1}_{M \cdot x_v^m x_{v^*}^{m^* + 1}}.
\end{align}

We now pass to the contribution of the dual graph $\Gammaloop_{g-1, 1, [n]}$.
It is given by
\begin{align*}
  &\Cont_{\Gammaloop_{g-1,1,[n]}}\pi_*\Omega_{g,M}\\=& \frac{(2g + 2 - D)!(4g - 1 + n - D)!}{2 \cdot (2g - 2 + n)!} \cdot [\mathcal{D}^{\{1,v^*\}, g+1}_{\Gammaloop_{g-1, 1, [n]}}(a_2, \dotsc, a_n, x_v)]_{M \cdot x_v^{2g + 2 - D}} \\
  &+\sum_{i=1}^{n}\sum_{m + m^* = 2g + 1 - D}\frac{(m^* + 1)!(2g + 1 - D)!(2g - 3 + n + m)!}{2 \cdot (2g - 3 + n)!}
\\&\hspace{100pt}\cdot  [\mathcal{D}^{\{2,i\}, g+1}_{\Gammaloop_{g-1, 1, [n]}}(a_2, \dotsc, a_n, x_v, x_{v^*})]_{M \cdot x_v^m x_{v^*}^{m^* + 1}} \\
  &+ \sum_{m + m^* = 2g + 1 - D} \frac{(m^* + 1)!(2g + 1 - D)!(2g - 3 + n + m)!}{2 \cdot (2g - 3 + n)!} \\&\hspace{80pt}\cdot[\mathcal{D}^{\{3,e\}, g+1}_{\Gammaloop_{g-1, 1, [n]}}(a_2, \dotsc, a_n, x_v, x_{v^*})]_{M \cdot x_{v^*}^{m^* + 1} x_v^{m}}.
\end{align*}
Note that the first two lines vanish by Remark~\ref{rem:contr-asimple-loop}. Unfolding the definition of $\mathcal{D}^{\{3,e\}, g+1}_{\Gammaloop_{g-1, 1, [n]}}$, we expand the remaining contribution to 
\begin{align}\label{eqn:cont-g-1-[n]-0}
&\Cont_{\Gammaloop_{g-1,1,[n]}}\pi_*\Omega_{g,M}
\nonumber\\=&\sum_{m+m^*=2g+1-D}
\frac{(m^*+1)!(2g+1-D)!(2g-3+n+m)!}{2(2g-3+n)!}
\nonumber\\&\hspace{50pt}\cdot\Bigg[\xi_{\Gammaloop_{g-1,1,[n]}},\Coeff_{r^0}\frac{1}{r}\sum_{\substack{w_1,w_2=0\\w_1+w_2\equiv x_{v^*} \mod r}}^{r-1}\Bigg[\exp\Big(\frac{1}{2}(\sum_{j=2}^{n}a_j+x_v+x_{v^*})^2\psi_1+\frac{1}{2}\sum_{j=2}^{n}a_j^2\psi_j\Big)
\nonumber\\&\hspace{140pt}\cdot\frac{1-e^{\frac{1}{2}w_1^2\psi_{h(e)}}}{\psi_{h(e)}} \frac{1-e^{\frac{1}{2}w_2^2\psi_{h'(e)}}}{\psi_{h'(e)}}\Bigg]_{M\cdot x_v^{m}x_{v^*}^{m^*+1}}\Bigg].    
\end{align}
To simplify further, we use the following:
\begin{lemma}
  \label{lema:lim-w1-w2}
  For any $d_1,d_2\geq0$, the constant term of the following polynomial in $r$ for sufficiently large $r$ satisfies
\begin{align*}
\Coeff_{r^{0}}
\left[\frac{1}{r}\sum_{\substack{w_1,w_2=0\\
w_1+w_2\equiv x \mod r}}^{r-1}w_1^{2d_1}
	w_2^{2d_2}  \right]
	=-\frac{1}{\binom{2d_1+2d_2}{2d_1}}x^{2d_1+2d_2}+O(x^{< 2d_1+2d_2}).
\end{align*}
\end{lemma}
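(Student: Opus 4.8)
The plan is to reduce the claimed asymptotic to a finite sum evaluated by a residue/generating-function computation, tracking only the top-degree behaviour in $x$. First I would fix $x$ and $r$ with $0 \le x < r$, and observe that the inner sum runs over the $r$ pairs $(w_1, w_2)$ with $0 \le w_1, w_2 \le r-1$ and $w_1 + w_2 \equiv x \pmod r$. There are exactly two regimes: either $w_1 + w_2 = x$ (with $0 \le w_1 \le x$), or $w_1 + w_2 = x + r$ (with $x+1 \le w_1 \le r-1$, equivalently $w_2 = x + r - w_1$ ranging over $1, \dots, r-1-x$). Thus
\begin{equation*}
  \frac 1r \sum_{\substack{w_1 + w_2 \equiv x}} w_1^{2d_1} w_2^{2d_2}
  = \frac 1r \sum_{w_1 = 0}^{x} w_1^{2d_1}(x - w_1)^{2d_2}
  + \frac 1r \sum_{w_1 = x+1}^{r-1} w_1^{2d_1}(x + r - w_1)^{2d_2}.
\end{equation*}
The first term is $\tfrac 1r$ times a polynomial in $x$ of degree $2d_1 + 2d_2 + 1$, so it contributes nothing to the constant term in $r$. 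Hence the whole constant term comes from the second sum, and the task is to extract $\Coeff_{r^0}$ of $\tfrac 1r \sum_{w_1 = x+1}^{r-1} w_1^{2d_1}(x + r - w_1)^{2d_2}$.

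Next I would substitute $u = w_1 - x$, so the second sum becomes $\tfrac 1r \sum_{u = 1}^{r - 1 - x} (u + x)^{2d_1}(r - u)^{2d_2}$; expanding $(u+x)^{2d_1}$ binomially and using standard Faulhaber-type polynomial identities, $\sum_{u=1}^{r-1-x} u^{j}(r - u)^{k}$ is a polynomial in $r$ and $x$, and one reads off the coefficient of $r^1$ (which after dividing by $r$ gives the sought constant term). The cleanest bookkeeping is probably via the integral comparison: the Riemann-sum heuristic suggests that as $r \to \infty$ with $x$ fixed, $\tfrac 1r \sum_{u=1}^{r-1-x}(u+x)^{2d_1}(r-u)^{2d_2}$ is asymptotic, in its top $r$-power, to $\int_0^r (t)^{2d_1}(r-t)^{2d_2}\,dt$ up to lower-order-in-$r$ corrections — but since we want the coefficient of $r^0$ after dividing by $r$, i.e. the coefficient of $r^1$, and the integral $\int_0^r t^{2d_1}(r-t)^{2d_2}\,dt = r^{2d_1 + 2d_2 + 1} B(2d_1 + 1, 2d_2 + 1)$ is pure $r^{2d_1+2d_2+1}$, the $r^1$ contribution must be isolated by the Euler–Maclaurin correction terms together with the $x$-dependence. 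I would instead argue directly: write $S(r, x) := \sum_{u = 1}^{r-1-x} (u+x)^{2d_1}(r - u)^{2d_2}$, note $S$ is a polynomial in $x$ (for fixed large $r$) and in $r$, and compute the coefficient of $x^{2d_1 + 2d_2}$ in $\tfrac 1r S(r,x)$ by differentiating $2d_1 + 2d_2$ times in $x$ at $x = 0$; at that order only the endpoint terms of the finite-difference survive, and one recovers $-\tfrac{(2d_1)!(2d_2)!}{(2d_1+2d_2)!} = -1/\binom{2d_1+2d_2}{2d_1}$ as the coefficient of $r^0$.

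The main obstacle I anticipate is controlling the interplay between the two expansion parameters $r$ and $x$ cleanly: the sum $S(r,x)$ has a variable number of terms $r - 1 - x$ depending on both, so the naive "polynomial in $r$" and "polynomial in $x$" statements have to be established carefully (e.g. by first fixing the relation and only afterwards letting $r$ be a free variable, exactly in the spirit of Pixton's polynomiality arguments recalled in Section~\ref{subsec:poly}). A robust way around this is to introduce the generating function $\sum_{d_1, d_2 \ge 0} \Coeff_{r^0}\bigl[\tfrac 1r \sum_{w_1 + w_2 \equiv x} w_1^{2d_1} w_2^{2d_2}\bigr] \tfrac{s^{2d_1}}{(2d_1)!}\tfrac{t^{2d_2}}{(2d_2)!}$, recognise the inner double sum as coming from $\tfrac 1r \sum_{w} e^{s w_1 + t w_2}$ over the constrained lattice, evaluate that closed form (a ratio of exponentials, much as in Pixton's DR formula), extract the $\Coeff_{r^0}$, and then read off that the top-$x$-degree part of the coefficient of $\tfrac{s^{2d_1}}{(2d_1)!}\tfrac{t^{2d_2}}{(2d_2)!}$ equals $-\tfrac{x^{2d_1+2d_2}}{\binom{2d_1+2d_2}{2d_1}}$, i.e. the generating function of the claimed leading terms is $-\tfrac{e^{sx} - e^{tx}}{s - t}$ up to lower order in $x$. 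Once the closed form is in hand the extraction is routine; packaging it as a generating-function identity is what makes the two-parameter asymptotics manageable.
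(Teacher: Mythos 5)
Your opening reduction coincides with the paper's: split the congruence sum into the diagonal regime $w_1+w_2=x$ and the wrap-around regime $w_1+w_2=x+r$, observe that the diagonal part is independent of $r$ and hence contributes nothing to $\Coeff_{r^0}$ after dividing by $r$, and reduce to extracting the coefficient of $r^1x^{2d_1+2d_2}$ in $\sum_{u=1}^{r-1-x}(u+x)^{2d_1}(r-u)^{2d_2}$. The genuine gap is that this extraction --- the entire content of the lemma --- is never carried out. The difficulty you correctly flag (the summation range depends on both $r$ and $x$) is resolved in the paper by adding and subtracting the range $0\le w\le x$, so that the full sum becomes $\sum_{w=0}^{x}w^{2d_1}\bigl((x-w)^{2d_2}-(r+x-w)^{2d_2}\bigr)+\sum_{w=0}^{r-1}w^{2d_1}(r+x-w)^{2d_2}$; the second piece has $x$-degree at most $2d_2<2d_1+2d_2$, and the linear-in-$r$ part of the first piece is $-2d_2\,r\sum_{w=0}^{x}w^{2d_1}(x-w)^{2d_2-1}$, whose leading coefficient in $x$ is the Beta integral $\int_0^1t^{2d_1}(1-t)^{2d_2-1}\,dt$, yielding $-\frac{(2d_1)!\,(2d_2)!}{(2d_1+2d_2)!}$. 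Your substitute for this computation --- the assertion that after differentiating $2d_1+2d_2$ times in $x$ ``only the endpoint terms of the finite-difference survive'' and the stated constant then drops out --- is not an argument, and as a description it is misleading: in any correct evaluation the relevant coefficient is a full convolution sum of the form $\sum_{w=0}^{x}w^{2d_1}(x-w)^{2d_2-1}$ (or its mirror with $d_1$ and $d_2$ exchanged), not a boundary contribution, and the precise constant is exactly its Beta-function normalization.

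Moreover, the one explicit identity you offer as the packaged answer is false: the generating function of the claimed leading terms is $\sum_{d_1,d_2\ge0}\bigl(-\tfrac{x^{2d_1+2d_2}}{\binom{2d_1+2d_2}{2d_1}}\bigr)\tfrac{s^{2d_1}t^{2d_2}}{(2d_1)!\,(2d_2)!}=-\sum_{d_1,d_2\ge0}\tfrac{x^{2d_1+2d_2}}{(2d_1+2d_2)!}s^{2d_1}t^{2d_2}$, i.e.\ the even--even part of $-\tfrac{se^{sx}-te^{tx}}{s-t}$, whereas the $s^{2d_1}t^{2d_2}$-coefficient of your proposed $-\tfrac{e^{sx}-e^{tx}}{s-t}$ is $-\tfrac{x^{2d_1+2d_2+1}}{(2d_1+2d_2+1)!}$, of the wrong degree in $x$; so this ``robust'' route, as written, would not verify the constant either. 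Finally, note that any completion along these lines (yours or the paper's) requires $d_1,d_2\ge1$: for $d_2=0$ the congruence sum is independent of $x$, so the claimed leading term $-x^{2d_1}$ is absent, and the degree argument discarding the range-$\{0,\dots,r-1\}$ piece likewise needs strict inequality. This is harmless for the application, where the exponents are $2d_i+2$, but a complete proof should state the restriction.
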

\begin{proof}
We need to consider the following sum:
\begin{align*}
\sum_{w=0}^{x}w^{2d_1}(x-w)^{2d_2}  +\sum_{w=x+1}^{r-1}w^{2d_1}(r+x-w)^{2d_2}.  
\end{align*}
The power sums are polynomials in $x$ and $r$. We are only interested in the
coefficient of $r\cdot x^{2d_1+2d_2}$.
We rewrite the expression as
\begin{align*}
\sum_{w=0}^{x}w^{2d_1}((x-w)^{2d_2}-(r+x-w)^{2d_2})  +\sum_{w=0}^{r-1}w^{2d_1}(r+x-w)^{2d_2}.  
\end{align*}
We do not need to consider the second sum since its degree in $x$ is $2d_2 < 2d_1 + 2d_2$.
The linear term in $r$ of the first sum is
\begin{align*}
-2r d_2\sum_{w=0}^{x}w^{2d_1}(x-w)^{2d_2-1}.    
\end{align*}
Then the coefficient of $r x^{2d_1+2d_2}$ can be computed by
\begin{align*}
-2d_2 x^{-2d_1-2d_2}
\int_{0}^{x}w^{2d_1}(x-w)^{2d_2-1} dw.
\end{align*}
By repeated integration by parts we can finally evaluate the coefficient to be:
\begin{align*}
-\frac{1}{\binom{2d_1+2d_2}{2d_1}}   .
\end{align*}
\end{proof}

Then by using Lemma~\ref{lema:lim-w1-w2} and degree considerations, equation~\eqref{eqn:cont-g-1-[n]-0} becomes:
\begin{align}\label{eqn:cont-ga-g-1[n]0}
&\Cont_{\Gammaloop_{g-1,1,[n]}}\pi_*\Omega_{g,M}
\nonumber\\=&\sum_{m+m^*=2g+1-D}
\frac{(m^*+1)!(2g+1-D)!(2g-3+n+m)!}{2(2g-3+n)!}
\nonumber\\&\hspace{50pt}\cdot\Bigg[\xi_{\Gammaloop_{g-1,1,[n]}},\Coeff_{r^0}\frac{1}{r}\sum_{\substack{w_1,w_2=0\\w_1+w_2\equiv x_{v^*} \mod r}}^{r-1}\Bigg[\exp\Big(\frac{1}{2}(\sum_{j=2}^{n}a_j+x_v+x_{v^*})^2\psi_1+\frac{1}{2}\sum_{j=2}^{n}a_j^2\psi_j\Big)
\nonumber\\&\hspace{140pt}\cdot\sum_{d_1,d_2=0}^{\infty}\frac{w_1^{2d_1+2}w_2^{2d_2+2}\psi^{d_1}(\psi')^{d_2}}{2^{d_1+d_2+2}(d_1+1)!(d_2+1)!}\Bigg]_{M\cdot x_v^{m}x_{v^*}^{m^*+1}}\Bigg]
\nonumber\\=&-\sum_{m+m^*=2g+1-D}
\frac{(m^*+1)!(2g+1-D)!(2g-3+n+m)!}{2(2g-3+n)!}
\nonumber\\&\hspace{50pt}\cdot\Bigg[\xi_{\Gammaloop_{g-1,1,[n]}},\Bigg[\exp\Big(\frac{1}{2}(\sum_{j=2}^{n}a_j+x_v+x_{v^*})^2\psi_1+\frac{1}{2}\sum_{j=2}^{n}a_j^2\psi_j\Big)
\nonumber\\&\hspace{140pt}\cdot\sum_{d_1,d_2=0}^{\infty}\frac{(2d_1+1)!!(2d_2+1)!!(x_{v^*})^{2d_1+2d_2+4}\psi^{d_1}(\psi')^{d_2}}{(2d_1+2d_2+4)!}\Bigg]_{M\cdot x_v^{m}x_{v^*}^{m^*+1}}\Bigg].
\end{align}

\subsection{Useful identities}

We collect several useful lemmas, which will be used repeatedly in the proof of Theorem~\ref{thm:coeff-bouquet}.
\begin{lemma}[Pascal's rule]\label{lem:Pascal-rule}For positive integers $n,k$
 with $1\leq k\leq n$:
\begin{align*}
\binom{n}{k-1}+\binom{n}{k}=\binom{n+1}{k}.    
\end{align*}
\end{lemma}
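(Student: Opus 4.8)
The plan is to prove the identity by a direct computation with factorials; this is the quickest route and makes no appeal to anything beyond the definition of $\binom{n}{k}$. First I would write each binomial coefficient on the left-hand side in its factorial form, namely $\binom{n}{k-1} = \frac{n!}{(k-1)!\,(n-k+1)!}$ and $\binom{n}{k} = \frac{n!}{k!\,(n-k)!}$. Both expressions are legitimate because the hypothesis $1 \le k \le n$ guarantees that the factorial arguments $k-1$, $k$, $n-k$, and $n-k+1$ are all non-negative integers.

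Next I would bring the two fractions over the common denominator $k!\,(n-k+1)!$, using $k! = k\cdot(k-1)!$ and $(n-k+1)! = (n-k+1)\cdot(n-k)!$ to rewrite each numerator. This gives
\[
\binom{n}{k-1}+\binom{n}{k} = \frac{n!\,k + n!\,(n-k+1)}{k!\,(n-k+1)!} = \frac{n!\,(n+1)}{k!\,(n-k+1)!} = \frac{(n+1)!}{k!\,(n+1-k)!} = \binom{n+1}{k},
\]
which is exactly the claim. As an alternative I would record the standard combinatorial argument: $\binom{n+1}{k}$ counts the $k$-element subsets of $\{1,\dots,n+1\}$, and splitting these according to whether or not they contain the element $n+1$ produces $\binom{n}{k-1}$ subsets that do and $\binom{n}{k}$ subsets that do not, whose sum is the left-hand side.

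There is no genuine obstacle. The only point worth a moment's care — and it is minor — is the behaviour at the boundary values $k=1$ and $k=n$, where one term on the left degenerates to $\binom{n}{0}=1$ or $\binom{n}{n}=1$; under the usual factorial conventions these cases are already absorbed by the computation above, so no separate treatment is needed.
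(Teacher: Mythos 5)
Your proof is correct: the factorial computation is the standard verification of Pascal's rule, and the boundary cases $k=1$ and $k=n$ are indeed absorbed by the same calculation. The paper simply states this lemma as a classical fact without proof, so there is nothing to compare against; your argument (either the algebraic or the combinatorial version) fully suffices.
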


\begin{lemma}\label{lem:cosh-e-indep}For any non-negative integers $g,D,e$ with 
$2g - D - 2e \geq 0$, we have \begin{equation*}
    (2g - D - 2e)! \left[\cosh(a + x)\right]_{x^{2g - D - 2e}}
    = (2g + 2 - D)! \left[\cosh(a + x)\right]_{x^{2g + 2 - D}}.
  \end{equation*}
\end{lemma}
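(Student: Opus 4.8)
\textbf{Proof proposal for Lemma~\ref{lem:cosh-e-indep}.}

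The plan is to exploit the elementary fact that for the even function $\cosh(a+x) = \tfrac12\bigl(e^{a+x}+e^{-(a+x)}\bigr)$, the Taylor coefficient $[\cosh(a+x)]_{x^{N}}$ with respect to $x$ depends on $a$ and $N$ in a completely explicit way, namely
\begin{equation*}
  \bigl[\cosh(a+x)\bigr]_{x^{N}} = \frac{1}{N!}\,\frac{d^{N}}{dx^{N}}\Big|_{x=0}\cosh(a+x) = \frac{1}{N!}\cdot\begin{cases}\cosh a & N \text{ even},\\ \sinh a & N \text{ odd}.\end{cases}
\end{equation*}
First I would observe that both exponents appearing in the statement, $2g-D-2e$ and $2g+2-D$, have the same parity (they differ by $2e+2$, an even number), so the case distinction above produces the \emph{same} hyperbolic function $\cosh a$ or $\sinh a$ on both sides. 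Consequently, after multiplying through by the respective factorials, both sides of the claimed identity are equal to that single hyperbolic function evaluated at $a$, and the factorials cancel against the $1/N!$ normalisations exactly.

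Concretely, writing $N = 2g-D-2e$ and $N' = 2g+2-D = N + 2e + 2$, the left-hand side is $N!\cdot\frac{1}{N!}f(a) = f(a)$ and the right-hand side is $N'!\cdot\frac{1}{N'!}f(a) = f(a)$, where $f = \cosh$ if $N$ (equivalently $N'$) is even and $f = \sinh$ if it is odd; in either case the two sides agree. The hypothesis $2g - D - 2e \ge 0$ is needed precisely to ensure that the coefficient $[\cosh(a+x)]_{x^{N}}$ on the left is taken in a legitimate (non-negative) degree, so that the formula for the Taylor coefficient applies; no further positivity is required on the right since $2g+2-D \ge 2g-D-2e \ge 0$ automatically.

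There is essentially no obstacle here — the statement is a bookkeeping identity about Taylor coefficients of $\cosh$, and the only point that must be checked carefully is the parity matching, which is immediate since the two exponents differ by an even integer. I would present the argument in two or three lines: recall the coefficient formula, note the parity coincidence, and cancel the factorials.
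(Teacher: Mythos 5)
Your proposal is correct and is essentially the same argument as the paper's: the paper splits $\cosh(a+x)$ via the addition formula and notes that extracting a coefficient of fixed parity leaves $\cosh a$ (for $D$ even) or $\sinh a$ (for $D$ odd) after multiplying by the factorial, which is exactly your explicit Taylor-coefficient computation together with the parity matching of the two exponents.
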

\begin{proof}
  If $D$ is even, then
  \begin{align*}
    &(2g - D - 2e)! \left[\cosh(a + x)\right]_{x^{2g - D - 2e}} = (2g - D - 2e)! \left[\cosh(a) \cosh(x)\right]_{x^{2g - D - 2e}} \\
    = &\cosh(a) =(2g + 2 - D)! \left[\cosh(a + x)\right]_{x^{2g + 2 - D}}.
  \end{align*}
If $D$ is odd, then
  \begin{align*}
    &(2g - D - 2e)! \left[\cosh(a + x)\right]_{x^{2g - D - 2e}} = (2g - D - 2e)! \left[\sinh(a) \sinh(x)\right]_{x^{2g - D - 2e}} \\
    = &\sinh(a) =(2g + 2 - D)! \left[\cosh(a + x)\right]_{x^{2g + 2 - D}}.
  \end{align*}
\end{proof}

We will use the following to simplify some of the sums over $m$ and $m^*$ appearing in the contributions.
\begin{lemma}
  \label{lem:sum-mm}For any non-negative integers $m,m^*$ and $e$, we have
  \begin{align*}
    \sum_{m + m^* = 2g + 1 - D} &\frac{(m^* + 1)!(2g + 1 - D)!(2g - 3 + n + m)!}{(2g - 3 + n)!} \binom{2g + 2 - D - e}{m^* + 1 - e} \\
    = &
        \begin{cases}
          \frac{(2g + 2 - D)!(4g - 1 + n - D)!}{(2g - 2 + n)!} & \text{if } e = 0, \\
          \frac{(2g + 1 - D)! (4g + n - D)! e!}{(2g - 2 + n + e)!} & \text{if } e \ge 1.
        \end{cases}
  \end{align*}
\end{lemma}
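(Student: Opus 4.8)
The plan is to treat the two cases $e = 0$ and $e \ge 1$ together by recognizing the left-hand side as the evaluation of a hypergeometric-type sum that can be collapsed using a single combinatorial identity (essentially the Chu--Vandermonde identity, or equivalently repeated use of Pascal's rule, Lemma~\ref{lem:Pascal-rule}). First I would set $D' = 2g + 1 - D$ for brevity, so the summation index runs over $m + m^* = D'$, and rewrite the summand. The factor $(2g - 3 + n + m)!/(2g - 3 + n)!$ is a rising factorial $(2g-2+n)_m$ (Pochhammer), and $\binom{2g + 2 - D - e}{m^* + 1 - e} = \binom{D' + 1 - e}{m^* + 1 - e}$. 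So the sum becomes, up to the constant $(2g+1-D)! = D'!$,
\begin{equation*}
  D'! \sum_{m^* = 0}^{D'} (m^* + 1)! \cdot (2g - 2 + n)_{D' - m^*} \cdot \binom{D' + 1 - e}{m^* + 1 - e}.
\end{equation*}

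Next I would convert the factorials into binomial coefficients to expose the Vandermonde structure. Write $(m^*+1)! = (m^*+1)! $ and pull out what is needed so that the $m^*$-dependence is a product of two binomial coefficients in $m^*$ times fixed factorials; concretely, $(2g-2+n)_{D'-m^*} = (2g - 3 + n + D' - m^*)! / (2g - 3 + n)!$, and I would aim to write the whole summand as $\bigl(\text{constant}\bigr) \cdot \binom{A}{m^*}\binom{B}{C - m^*}$ for suitable $A, B, C$ depending on $g, n, D, e$. The sum over $m^*$ of such a product is then evaluated by Chu--Vandermonde to a single binomial coefficient, and after restoring the constants one reads off the closed form. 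The appearance of $e!$ and the shift $(2g - 2 + n + e)!$ in the $e \ge 1$ answer is exactly what one expects from the lower limit $m^* \ge e - 1$ forced by $\binom{D' + 1 - e}{m^* + 1 - e}$ vanishing for $m^* < e - 1$; the $e = 0$ case is the degenerate one where the lower limit is $m^* \ge -1$, i.e.\ no constraint, which is why it has a slightly different shape. An alternative, fully elementary route I would keep in reserve: induct on $e$, using Pascal's rule $\binom{D'+1-e}{m^*+1-e} = \binom{D'-e}{m^*-e} + \binom{D'-e}{m^*+1-e}$ to relate the sum for $e$ to a telescoping combination of sums for $e$ and $e-1$ with shifted parameters, matching against the claimed formula; the base case $e = 0$ (or $e = 1$) would be checked directly by Vandermonde.

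The main obstacle I anticipate is purely bookkeeping: correctly matching the four running parameters ($g$, $n$, $D$, $e$, and the summation variable) into the rigid three-parameter shape $\binom{A}{m^*}\binom{B}{C-m^*}$ demanded by Chu--Vandermonde, and tracking the constant prefactors through that rewriting without an off-by-one error — especially the interaction between the $+1$ in $(m^*+1)!$ and the $+1$ in $\binom{D'+1-e}{m^*+1-e}$, which is what ultimately produces the clean factor $(m(v^*)+1)$ versus $(m(v^*)+1)!$ discrepancy seen in the contributions. I would verify the final formula against a small numerical case (say $g = 1$, $n = 2$, $D = 0$, and $e \in \{0, 1, 2\}$) before committing to it. No deep input is needed beyond Vandermonde and the observation that $a_1 + x_{v^*}$ is independent of $x_{v^*}$ already used in the surrounding text.
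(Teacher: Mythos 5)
Your proposal is correct and takes essentially the same route as the paper: the paper also rewrites the summand as $\binom{2g-3+n+m}{2g-3+n}\binom{m^*+1}{e}\cdot e!\,(2g+1-D)!\,(2g+2-D-e)!$ and collapses the sum by a single Vandermonde-type convolution, namely $\sum_{n_1+n_2=a}\binom{b+n_1}{b}\binom{n_2+1}{c}=\binom{a+b+2}{b+c+1}$ for $c\geq 1$ and $\binom{a+b+1}{b+1}$ for $c=0$, which accounts for the case split exactly as you predict. The only minor caveat is that the identity actually needed has the summation index in the \emph{upper} entries of both binomials rather than the literal fixed-upper-index shape $\binom{A}{m^*}\binom{B}{C-m^*}$ you describe, but the two are equivalent (by upper negation or parallel summation), so your bookkeeping plan goes through.
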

\begin{proof}
  We compute
  \begin{align*}
    &\sum_{m + m^* = 2g + 1 - D}\frac{(m^* + 1)!(2g + 1 - D)!(2g - 3 + n + m)!}{(2g - 3 + n)!} \binom{2g + 2 - D - e}{m^* + 1 - e} \\
    = &\sum_{m + m^* = 2g + 1 - D}\frac{(m^* + 1)! (2g + 1 - D)! (2g - 3 + n + m)! (2g + 2 - D - e)!}{(2g - 3 + n)! m! (m^* + 1 - e)!} \\
    = &\sum_{m + m^* = 2g + 1 - D} \binom{2g - 3 + n + m}{2g - 3 + n} \binom{m^* + 1}{e} \cdot e! (2g + 1 - D)! (2g + 2 - D - e)!.
  \end{align*}
Then this lemma follows from the following combinatorial identity:  any $a,b,c\geq0$,
    \begin{align*}
    \sum_{n_1+n_2=a}\binom{b+n_1}{b}\binom{n_2+1}{c}=
    \begin{cases}
    \binom{a+b+2}{b+c+1},    \quad c\geq1 
    \\
    \binom{a+b+1}{b+c+1},    \quad c=0
    .\end{cases}\end{align*}  
\end{proof}

\subsection{A strengthening of Theorem~\ref{thm:coeff-bouquet}}

We formulate a strengthening of both Conjecture~\ref{conj:bouquet-unique} and Theorem~\ref{thm:coeff-bouquet} using two new ingredients:
\begin{itemize}
\item Define a linear form $\mathcal F$ on the space of polynomials of $\psi$ classes via
  \begin{equation*}
    \mathcal F(\prod_{i=1}^{n}\psi_i^{k_i}) = \prod_{i=1}^{n}\frac 1{(2k_i + 1)!!}.
  \end{equation*}
\item For any $h \le g$, we let $\Gammaloop_{h, g-h, [n]}$ be the graph with a single vertex of genus $h$ containing all markings and $g-h$ loops.
  We call these \emph{bouquet type} graphs.
  As a special case $\Gammaloop_{g, 0, [n]} = \Gamma_{g, [n]}$
\end{itemize}
\begin{conjecture}
  \label{conj:bouquet-unique-general}
  Consider an arbitrary topological recursion relation
  \begin{equation*}
    \sum_{\Gamma} {\xi_{\Gamma}}_{*}(c_\Gamma) = 0,
  \end{equation*}
  where each $c_\Gamma$ is a polynomial in $\psi$-classes.
  We then have the identity between the coefficients of bouquet type graphs
  \begin{equation}
    \label{eq:bouquet-identity}
    \sum_{h = 0}^g 8^{-h} \mathcal F(c_{\Gammaloop_{h, g-h, [n]}}) = 0
  \end{equation}
\end{conjecture}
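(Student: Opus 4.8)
\emph{Reformulating the conjecture for arbitrary relations.} The plan is to convert the statement about \emph{all} relations into the construction of a single \emph{detecting class}. Write an arbitrary topological recursion relation as an element $\sum_{\Gamma}[\Gamma,c_\Gamma]\in\mathcal{S}^{trr}_{g,n}$ lying in $\ker q$, and let $R\colon\mathcal{S}^{trr}_{g,n}\to\mathbb{Q}$ be the linear functional reading off the weighted bouquet coefficients, $R\big(\sum_{\Gamma}[\Gamma,c_\Gamma]\big)=\sum_{h=0}^{g}8^{-h}\mathcal{F}\big(c_{\Gammaloop_{h,g-h,[n]}}\big)$, where $\mathcal{F}$ is extended by zero on basic classes carrying half-edge $\psi$'s. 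Conjecture~\ref{conj:bouquet-unique-general} is exactly the assertion $R|_{\ker q}=0$, i.e.\ that $R$ factors through $q$. Since $\overline{\mathcal{M}}_{g,n}$ is a smooth proper Deligne--Mumford stack its rational Poincar\'e pairing is perfect, so any functional on $q(\mathcal{S}^{trr}_{g,n})\subseteq H^{2g}(\overline{\mathcal{M}}_{g,n})$ extends to $H^{2g}$ and is represented by integration against a class. Thus the conjecture is equivalent to the existence of a class $\Theta\in H^{2(2g-3+n)}(\overline{\mathcal{M}}_{g,n})$ with $\int_{\overline{\mathcal{M}}_{g,n}}(\xi_\Gamma)_*(c)\cup\Theta=R([\Gamma,c])$ for every basis element $[\Gamma,c]$. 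This reduces the whole conjecture to two pairing conditions on one class: (b) $\int_{\overline{\mathcal{M}}_\Gamma}c\cdot\xi_\Gamma^*\Theta=0$ for every \emph{reducible} (multi-vertex) stable graph $\Gamma$ and every $\psi$-monomial $c$; and (c) $\int_{\overline{\mathcal{M}}_{\Gammaloop_{h,g-h,[n]}}}c\cdot\xi^*\Theta=8^{-h}\mathcal{F}(c)$ for every bouquet graph and every $\psi$-monomial $c$ (in particular $\Theta$ must annihilate bouquet classes decorated by half-edge $\psi$'s).

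\emph{Pinning down $\Theta$ and verifying the normalization (c).} Condition (c) on the trivial graph $\Gammaloop_{g,0,[n]}$ already forces $\int_{\overline{\mathcal{M}}_{g,n}}\psi_1^{k_1}\cdots\psi_n^{k_n}\,\Theta=8^{-g}\prod_i\frac{1}{(2k_i+1)!!}$ for $\sum_i k_i=g$. These numbers, together with the $8^{-h}$ weights, are exactly the constants produced by the double ramification and Hodge calculus underlying Theorem~\ref{thm:coeff-bouquet} and Proposition~\ref{prop:lambda_g}, so I would seek $\Theta$ as the degree-$(2g-3+n)$ component of a cohomological field theory $\Phi=\{\Phi_{g,n}\}$ of DR/Hodge type (of the $\Theta$-class, or Brezin--Gross--Witten, flavor, where precisely the factors $8^{-g}$ and $(2k_i+1)!!^{-1}$ appear). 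The point of using a CohFT is that the splitting axiom makes the left-hand sides of both (b) and (c) computable: for any stable graph $\Gamma$ the restriction $\xi_\Gamma^*\Phi$ factorizes as a product of vertex contributions glued along edges by the CohFT metric. On a bouquet the $g-h$ loops become propagators; expanding each edge through the strata product factor $(-\psi_h-\psi_{h'})$ and evaluating the resulting sums with the residue computation of Lemma~\ref{lema:lim-w1-w2} should reproduce precisely the weight $8^{-h}\prod_i\frac{1}{(2k_i+1)!!}$, as well as the required vanishing against half-edge $\psi$'s, thereby establishing (c).

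\emph{The main obstacle: vanishing on reducible strata.} The hard part will be condition (b): $\Theta$ must pair to zero with \emph{every} reducible stratum, i.e.\ it must be ``dual to the irreducible locus'' -- the closure of curves carrying only non-separating nodes. This is the mirror image of the mechanism behind Theorem~\ref{thm:trr-g-2}, where $\lambda_g\lambda_{g-1}$ vanishes away from the rational-tails locus; here one instead needs a class vanishing on the \emph{entire} separating-node boundary while remaining nonzero on the loop-bouquets. I would try to force this by choosing $\Phi$ to be trivial along compact-type directions (for instance with a propagator supported on the non-separating node), so that the splitting factorization of $\xi_\Gamma^*\Phi$ vanishes identically whenever $\Gamma$ has a separating edge. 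Establishing such a vanishing uniformly over all reducible $\Gamma$ is the crux, and is essentially as deep as the conjecture itself; I therefore expect to proceed in pieces -- verifying (b) and (c) directly in low genus using explicit generators of $R^{2g-3+n}(\overline{\mathcal{M}}_{g,n})$, reducing the general case to the finite-dimensional solvability of the linear system (b)+(c) in each codimension $2g-3+n$, and cross-checking against the algorithm-relation case of Theorem~\ref{thm:coeff-bouquet-general} and the $n=2$, $\lambda_g\lambda_{g-1}$ computation of Theorem~\ref{thm:trr-g-2}. As a transparent base case, for $g=1,n=1$ there are no reducible strata, (b) holds vacuously, and one checks directly that $\Theta=1$ satisfies (c).
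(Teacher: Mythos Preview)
The statement you are attempting to prove is labeled a \emph{Conjecture} in the paper, and the paper does not prove it. What the paper does prove is the much weaker Theorem~\ref{thm:coeff-bouquet-general}: the identity \eqref{eq:bouquet-identity} holds for the specific family of relations $\pi_*\Omega_{g,M}$ (and their gluing push-forwards) produced by the Pixton/DR algorithm of Section~\ref{subsec:algorithm-trr}. That proof proceeds by a direct computation, namely Proposition~\ref{prop:1-loop-rel-M}, which shows $\mathcal F(\Cont_{\Gamma_{g,[n]}}) + 8\,\mathcal F(\Cont_{\Gammaloop_{g-1,1,[n]}})=0$ for each $\pi_*\Omega_{g,M}$ individually. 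No claim is made about arbitrary topological recursion relations, and there is no ``paper's own proof'' to compare against.

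Your Poincar\'e-duality reformulation --- replacing ``$R$ vanishes on $\ker q$'' by ``$R$ is represented by integration against some $\Theta\in H^{2(2g-3+n)}$'' --- is a correct and natural restatement, but as you yourself write, the crux (condition~(b), vanishing against every multi-vertex stratum with arbitrary $\psi$-decoration) is ``essentially as deep as the conjecture itself''. So what you have is a strategy, not a proof; the proposal does not close the gap.

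There is also a concrete error in your setup. You define $R$ by extending $\mathcal F$ \emph{by zero} on basic classes carrying half-edge $\psi$'s, and correspondingly require in (c) that $\Theta$ annihilate bouquet classes decorated by half-edge $\psi$'s. This is not how the paper uses $\mathcal F$: in Section~\ref{sec:coeff-bou} (see the computation leading to \eqref{eqn:F-cont-ga-g-1[n]0}), $\mathcal F$ is applied to the half-edge classes $\psi_h^{d_1}\psi_{h'}^{d_2}$ on the loop of $\Gammaloop_{g-1,1,[n]}$ with the \emph{same} rule $\mathcal F(\psi^d)=1/(2d+1)!!$, not with zero. With your convention the key identity of Proposition~\ref{prop:1-loop-rel-M} would already fail. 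Any detecting class $\Theta$ must therefore satisfy a richer condition on bouquets than the one you wrote, which in particular rules out the simplest ``propagator supported on non-separating nodes'' ansatz you sketch.
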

\begin{theorem}
  \label{thm:coeff-bouquet-general}
  Conjecture~\ref{conj:bouquet-unique-general} holds for the topological recursion relations in Corollary~\ref{cor:deg-d-TRR-uniform}.
\end{theorem}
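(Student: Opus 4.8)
The plan is to reduce, for the relations produced by Corollary~\ref{cor:deg-d-TRR-uniform}, the identity \eqref{eq:bouquet-identity} to Theorem~\ref{thm:coeff-bouquet} by a degree count on the bouquet‑type graphs. Such a relation has the shape $\prod_j\psi_j^{k_j}=\text{boundary terms}$ with $\sum_j k_j=g$; writing it as $\prod_j\psi_j^{k_j}-(\text{boundary terms})=\sum_\Gamma\xi_{\Gamma*}(c_\Gamma)=0$ we have $c_{\Gamma_{g,[n]}}=\prod_j\psi_j^{k_j}$ for the trivial graph $\Gamma_{g,[n]}=\Gammaloop_{g,0,[n]}$, while every other $c_\Gamma$ is a $\psi$-monomial on a graph carrying at least one edge. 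For $1\le h\le g-1$ the bouquet graph $\Gammaloop_{h,g-h,[n]}$ has codimension $g-h$, so the $\psi$-monomial $c_{\Gammaloop_{h,g-h,[n]}}$ on its single genus-$h$ vertex would have degree $g-(g-h)=h\ge h+\delta_h^0$, which is excluded from the boundary terms in Corollary~\ref{cor:deg-d-TRR-uniform}; hence $c_{\Gammaloop_{h,g-h,[n]}}=0$ for all $1\le h\le g-1$, and \eqref{eq:bouquet-identity} collapses to the two-term identity $8^{-g}\mathcal F(c_{\Gamma_{g,[n]}})+\mathcal F(c_{\Gammaloop_{0,g,[n]}})=0$.

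Since $\mathcal F(\mathbf 1)=1$, the form $\mathcal F$ is the identity on constants, so $\mathcal F(c_{\Gammaloop_{0,g,[n]}})=c_{\Gammaloop_{0,g,[n]}}$; and $\mathcal F(c_{\Gamma_{g,[n]}})=\mathcal F(\prod_j\psi_j^{k_j})=\prod_j\frac1{(2k_j+1)!!}$. Thus the collapsed identity is precisely the assertion $c_{\Gammaloop_{0,g,[n]}}=-8^{-g}\prod_j\frac1{(2k_j+1)!!}$, which is exactly the value of the coefficient of the bouquet class given by Theorem~\ref{thm:coeff-bouquet} (the sign coming from having moved that boundary term across the equality). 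This proves Theorem~\ref{thm:coeff-bouquet-general}.

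I would also record, as the conceptual reason and the route towards the full Conjecture~\ref{conj:bouquet-unique-general}, that \eqref{eq:bouquet-identity} can be checked directly on the building blocks assembling the relations of Corollary~\ref{cor:deg-d-TRR-uniform}. The map sending a relation to $\sum_{h=0}^g 8^{-h}\mathcal F(c_{\Gammaloop_{h,g-h,[n]}})$ is $\mathbb Q$-linear; a gluing pushforward along $\iota_{g_1,g_2}$ produces only stable graphs with at least two vertices, hence contributes $0$ to every $c_{\Gammaloop_{h,g-h,[n]}}$ and \eqref{eq:bouquet-identity} holds trivially; a self-gluing pushforward along $\iota_{g-1}$ can reach a one-vertex graph only from a one-vertex graph, sending $\Gammaloop_{h,g'-h,[n+2]}$ to $\Gammaloop_{h,g'+1-h,[n]}$ — fixing both the vertex genus $h$ and the weight $8^{-h}$, scaling the coefficient by a universal constant, with $\mathcal F$ insensitive to the distinction between markings and half-edges — so \eqref{eq:bouquet-identity} for $\iota_{g-1*}R$ is a constant multiple of \eqref{eq:bouquet-identity} for $R$ and follows by induction on the genus. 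The remaining building block is a single $\pi_*\Omega_{g,M}$, for which one extends the computation of Section~\ref{sec:coeff-bou}: one evaluates $\Cont_{\Gammaloop_{h,g-h,[n]}}\pi_*\Omega_{g,M}$ for all $0\le h\le g$ (Section~\ref{sec:coeff-bou} treats $h=g$ in \eqref{eqn:cont-g-[n]} and $h=g-1$ in \eqref{eqn:cont-ga-g-1[n]0}), applies $8^{-h}\mathcal F$, and sums over $h$.

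The main obstacle is precisely this last computation. By Proposition~\ref{prop:omega_g,m-formula-pi-pushforward} the $\{1,v^*\}$-contributions to a graph with loops vanish for the degree reason of Remark~\ref{rem:contr-asimple-loop}, leaving only the $\{3,e\}$-terms, which subdivide one of the $g-h$ loops into a genus-$0$ bubble; the $x=0$ specialization of Lemma~\ref{lema:lim-w1-w2} turns each of the remaining $g-h-1$ loops into a Bernoulli-type scalar, while Lemma~\ref{lema:lim-w1-w2} itself handles the subdivided loop, Lemma~\ref{lem:cosh-e-indep} removes the dependence on the forgotten descendants, and Lemma~\ref{lem:sum-mm} together with Pascal's rule (Lemma~\ref{lem:Pascal-rule}) collapses the multiplicity sums over $m$. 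The delicate point is the generating-function bookkeeping once $g-h\ge 2$: one must match the interplay of the weight $8^{-h}$, the factors $\frac{(2d_1+1)!!(2d_2+1)!!}{(2d_1+2d_2+4)!}$ coming from the subdivided loop, the analogous scalars from the free loops, and the $\mathcal F$-normalization $\prod\frac1{(2k+1)!!}$, and see that the sum over $h$ — equivalently, over the number of loops — telescopes to $0$. The reduction of the first two paragraphs serves as both a consistency check on this and, for the relations of Corollary~\ref{cor:deg-d-TRR-uniform} as literally stated, a complete proof.
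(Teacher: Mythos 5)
Your main argument (the first two paragraphs) has a genuine logical gap: it deduces Theorem~\ref{thm:coeff-bouquet-general} from Theorem~\ref{thm:coeff-bouquet}, but in the paper the implication runs the other way --- Theorem~\ref{thm:coeff-bouquet} has no independent proof and is obtained \emph{as a consequence} of Theorem~\ref{thm:coeff-bouquet-general}, so your argument is circular. Worse, even if one granted Theorem~\ref{thm:coeff-bouquet} as a black box, it is only an \emph{existence} statement: it asserts that \emph{some} relation of the stated shape has bouquet coefficient $\frac{1}{8^g\prod_i(2k_i+1)!!}$, not that the particular relations produced by Corollary~\ref{cor:deg-d-TRR-uniform} do. Your reduction (which is correct, and is essentially the paper's own deduction of Theorem~\ref{thm:coeff-bouquet} from Theorem~\ref{thm:coeff-bouquet-general}, run in reverse: the shape condition kills $c_{\Gammaloop_{h,g-h,[n]}}$ for $1\le h\le g-1$) shows that \eqref{eq:bouquet-identity} for such a relation is \emph{equivalent} to pinning down the bouquet coefficient of that specific relation --- which is precisely the uniqueness-type content of Conjectures~\ref{conj:bouquet-unique} and \ref{conj:bouquet-unique-general}, and exactly what must be proved; it cannot be imported from an existence statement.

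Your third paragraph does identify the paper's actual route: by linearity of \eqref{eq:bouquet-identity} one reduces to the building blocks $\pi_*\Omega_{g,M}$ and to gluing push-forwards of lower-genus relations, and then one computes the bouquet contributions and applies $\mathcal F$. But this is exactly where the real work lies, and the proposal leaves it undone. Moreover the scope you envisage is off: by the degree/homogeneity argument of Remark~\ref{rem:contr-asimple-loop}, every graph carrying an intact simple loop contributes zero to $\pi_*\Omega_{g,M}$ (a free loop costs class degree without supplying degree in the variables $a_i,x_v$), so the bouquet graphs $\Gammaloop_{h,g-h,[n]}$ with $h\le g-2$ do not appear at all; there is no sum over $h$ to telescope. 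What remains is the two-term identity $\mathcal F\bigl(\Cont_{\Gamma_{g,[n]}}\pi_*\Omega_{g,M}+8\,\Cont_{\Gammaloop_{g-1,1,[n]}}\pi_*\Omega_{g,M}\bigr)=0$ of Proposition~\ref{prop:1-loop-rel-M}, whose proof in the paper is the substantive computation of Section~\ref{sec:coeff-bou} (the cancellations $B_i-C_i+B_i'+C_i'=0$ and $A+A'=0$, using Lemmas~\ref{lema:lim-w1-w2}, \ref{lem:sum-mm}, \ref{lem:cosh-e-indep} and Pascal's rule), together with the check that gluing push-forwards of lower-genus relations inherit \eqref{eq:bouquet-identity}. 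None of this is supplied in the proposal, so as it stands the proof is incomplete.
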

Below in Section~\ref{ss:proof-bouquet}, we will discuss the proof of this theorem.

We now note that Theorem~\ref{thm:coeff-bouquet} is a consequence of Theorem~\ref{thm:coeff-bouquet-general}:
Corollary~\ref{cor:deg-d-TRR-uniform} yields topological recursion relations of the same shape as in Theorem~\ref{thm:coeff-bouquet}, and from there Theorem~\ref{thm:coeff-bouquet-general} implies the correct coefficient of the bouquet class.

\subsection{Applying $\mathcal F$}

In this subsection, we collect useful identities related to applying the operator $\mathcal F$.
To start, we note the identities:
\begin{equation*}
  \mathcal F e^{\frac{1}{2} a^2\psi}
  = \sum_{d = 0}^\infty \frac{a^{2d}}{(d)! 2^d (2d + 1)!!}
  = \sum_{d = 0}^\infty \frac{a^{2d}}{(2d + 1)!}
  = \frac{\sinh(a)}{a},
\end{equation*}
\begin{equation*}
  \mathcal F \frac{e^{\frac{1}{2} a^2\psi} - 1}{\psi}
  = \sum_{d = 1}^\infty \frac{a^{2d}}{(d)! 2^d (2d - 1)!!}
  = \sum_{d = 1}^\infty \frac{a^{2d}}{(2d)!}
  = \cosh(a) - 1.
\end{equation*}

We now apply $\mathcal F$ to the contributions to $\pi_* \Omega_{g, M}$.
For the trivial graph $\Gamma_{g,[n]}$, using \eqref{eqn:cont-g-[n]}, we obtain
\begin{align*}
  &\mathcal{F}\Cont_{\Gamma_{g,[n]}}\pi_*\Omega_{g,M}\\=&\frac{(2g + 2 - D)!(4g - 1 + n - D)!}{(2g - 2 + n)!} \cdot 
    \left[\sum_{i = 1}^n \left(\sum_{d = 1}^\infty \frac{a_i^{2d}}{(2d)!}\right) \prod_{j = 1: j \neq i}^n \left(\sum_{d = 0}^\infty \frac{a_j^{2d}}{(2d + 1)!}\right) \right]_{M \cdot x_v^{2g + 2 - D}} \\
  -& \sum_{i=2}^{n}\sum_{m + m^* = 2g + 1 - D}\frac{(m^* + 1)!(2g + 1 - D)!(2g - 3 + n + m)!}{(2g - 3 + n)!} \\
  & \qquad \qquad \qquad \cdot \left[\left(\sum_{d = 1}^\infty \frac{(a_i + x_{v^*})^{2d}}{(2d)!}\right) \prod_{j=1: j\neq i}^n \left(\sum_{d = 0}^\infty \frac{a_j^{2d}}{(2d + 1)!}\right)\right]_{M \cdot x_v^m x_{v^*}^{m^* + 1}} \\
  &:= A + \sum_{i = 2}^n B_i - \sum_{i = 2}^n C_i,
\end{align*}
where $A$ and $B_i$ are the $i = 1$ and $i \ge 2$ summands in the first term, and the $C_i$ are the summands in the second term.
We note that there is a part of $C_i$ which cancels with $B_i$:
\begin{align*}
  &\sum_{m + m^* = 2g + 1 - D}\frac{(m^* + 1)!(2g + 1 - D)!(2g - 3 + n + m)!}{(2g - 3 + n)!} \\
  & \qquad \qquad \qquad \cdot \left[\left(\sum_{d = 1}^\infty \frac{a_i^{2d}}{(2d)!}\right) \prod_{j=1: j\neq i}^n \left(\sum_{d = 0}^\infty \frac{a_j^{2d}}{(2d + 1)!}\right)\right]_{M \cdot x_v^m x_{v^*}^{m^* + 1}} \\
  = &\sum_{m + m^* = 2g + 1 - D}\frac{(m^* + 1)!(2g + 1 - D)!(2g - 3 + n + m)!}{(2g - 3 + n)!} \binom{2g + 2 - D}{m^* + 1} \\
  & \qquad \qquad \qquad \cdot \left[\left(\sum_{d = 1}^\infty \frac{a_i^{2d}}{(2d)!}\right) \left(\sum_{d = 0}^\infty \frac{(a + x_v)^{2d}}{(2d + 1)!}\right) \prod_{j=2: j\neq i}^n \left(\sum_{d = 0}^\infty \frac{a_j^{2d}}{(2d + 1)!}\right)\right]_{M \cdot x_v^{2g + 2 - D}} \\
  = &\frac{(2g + 2 - D)!(4g - 1 + n - D)!}{(2g - 2 + n)!} \\
  & \qquad \qquad \qquad \cdot \left[\left(\sum_{d = 1}^\infty \frac{a_i^{2d}}{(2d)!}\right) \left(\sum_{d = 0}^\infty \frac{(a + x_v)^{2d}}{(2d + 1)!}\right) \prod_{j=2: j\neq i}^n \left(\sum_{d = 0}^\infty \frac{a_j^{2d}}{(2d + 1)!}\right)\right]_{M \cdot x_v^{2g + 2 - D}} \\
  = & B_i,
\end{align*}
where $a:=\sum_{j=2}^{n}a_j$ and we used Lemma~\ref{lem:sum-mm}  in the second equality.

Using Lemma~\ref{lem:sum-mm}, we may also simplify the remaining terms of $C_i$:
\begin{align*}
  &\sum_{m + m^* = 2g + 1 - D}\frac{(m^* + 1)!(2g + 1 - D)!(2g - 3 + n + m)!}{(2g - 3 + n)!} \\
  & \qquad \qquad \qquad \cdot \left[\left(\sum_{d = 1}^\infty \frac{(a_i + x_{v^*})^{2d} - a_i^{2d}}{(2d)!}\right) \prod_{j=1: j\neq i}^n \frac{\sinh(a_j)}{a_j}\right]_{M \cdot x_v^m x_{v^*}^{m^* + 1}} \\
  =&\sum_{e = 1}^\infty \sum_{m + m^* = 2g + 1 - D}\frac{(m^* + 1)!(2g + 1 - D)!(2g - 3 + n + m)!}{(2g - 3 + n)!} \\
  & \qquad \qquad \qquad \cdot \left[\left(\sum_{d = 1}^\infty \frac{a_i^{2d - e}}{(2d - e)! e!}\right) \prod_{j=1: j\neq i}^n \frac{\sinh(a_j)}{a_j}\right]_{M \cdot x_v^m x_{v^*}^{m^* + 1 - e}} \\
  =&\sum_{e = 1}^\infty \sum_{m + m^* = 2g + 1 - D}\frac{(m^* + 1)!(2g + 1 - D)!(2g - 3 + n + m)!}{(2g - 3 + n)!} \binom{2g + 2 - D - e}{m^* + 1 - e} \\
  & \qquad \qquad \qquad \cdot \left[\left(\sum_{d = 1}^\infty \frac{a_i^{2d - e}}{(2d - e)! e!}\right) \frac{\sinh(a + x_v)}{a + x_v} \prod_{j=2: j\neq i}^n \frac{\sinh(a_j)}{a_j}\right]_{M \cdot x_v^{2g + 2 - D - e}} \\
  =&\sum_{e = 1}^\infty \frac{(2g + 1 - D)! (4g + n - D)!}{(2g - 2 + n + e)!} \\
  & \qquad \qquad \qquad \cdot \left[\left(\sum_{d = 1}^\infty \frac{a_i^{2d - e}}{(2d - e)!}\right) \frac{\sinh(a + x_v)}{a + x_v} \prod_{j=2: j\neq i}^n \frac{\sinh(a_j)}{a_j}\right]_{M \cdot x_v^{2g + 2 - D - e}} \\
  =&\sum_{e = 0}^\infty \frac{(2g + 1 - D)! (4g + n - D)!}{(2g - 1 + n + 2e)!}
  \cdot \left[a_i \frac{\sinh(a + x_v)}{a + x_v} \prod_{j=2}^n \frac{\sinh(a_j)}{a_j}\right]_{M \cdot x_v^{2g + 1 - D - 2e}} 
  \\&\hspace{10pt}+\sum_{e = 1}^\infty \frac{(2g + 1 - D)! (4g + n - D)!}{(2g - 2 + n + 2e)!}
  \cdot \left[\cosh(a_i) \frac{\sinh(a + x_v)}{a + x_v} \prod_{j=2: j\neq i}^n \frac{\sinh(a_j)}{a_j}\right]_{M \cdot x_v^{2g + 2 - D - 2e}},
\end{align*}
where in the last equality we divide the sum into two parts according to whether $e$ is odd or even. 

Note that
\begin{align*}
&\mathcal{F} \sum_{d_1,d_2=0}^{\infty}\frac{(2d_1+1)!!(2d_2+1)!!(x_{v^*})^{2d_1+2d_2+4}\psi^{d_1}(\psi')^{d_2}}{(2d_1+2d_2+4)!}
\\=&\sum_{d_1,d_2=0}^{\infty}\frac{(x_{v^*})^{2d_1+2d_2+4}}{(2d_1+2d_2+4)!}
=\sum_{d=1}^{\infty}(d-1)\frac{(x_{v^*})^{2d}}{(2d)!}.   
\end{align*}
Then applying $\mathcal F$ to the contribution \eqref{eqn:cont-ga-g-1[n]0} of the graph $\Gammaloop_{g-1, 1, [n]}$ to $\pi_* \Omega_{g, M}$, we have
\begin{align}\label{eqn:F-cont-ga-g-1[n]0} &\mathcal{F}\Cont_{\Gammaloop_{g-1,1,[n]}}\pi_*\Omega_{g,M}
\nonumber\\= &- \sum_{m + m^* = 2g + 1 - D} \frac{(m^* + 1)!(2g + 1 - D)!(2g - 3 + n + m)!}{2 \cdot (2g - 3 + n)!} 
\nonumber\\&\hspace{50pt}\cdot\left[\prod_{j = 1}^n \left(\sum_{d = 0}^\infty \frac{a_i^{2d}}{(2d + 1)!}\right) \sum_{d = 1}^\infty \frac{d - 1}{(2d)!} x_{v^*}^{2d}\right]_{M \cdot x_{v^*}^{m^* + 1} x_v^{m}}.
\end{align}

\subsection{Proof of Theorem~\ref{thm:coeff-bouquet}}
\label{ss:proof-bouquet}

We now discuss how to use the identities collected in this section to prove Theorem~\ref{thm:coeff-bouquet-general}, which implies Theorem~\ref{thm:coeff-bouquet}.

Because \eqref{eq:bouquet-identity} is linear in the coefficients of the topological recursion relation, it suffices to consider relations of the shape $\pi_{*}\Omega_{g,M}$ for arbitrary monomials $M =\prod_{i=2}^{n}a_i^{b_i}$ of degree $D\leq 2g+1$, as well as for gluing map push-forwards of those relations of lower genus.
We note that the only bouquet type graphs appearing in the relations $\pi_{*}\Omega_{g,M}$ are $\Gamma_{g, [n]}$ and $\Gammaloop_{g-1, 1, [n]}$.
Thus, \eqref{eq:bouquet-identity} holds for such relations by the following identity, whose proof will be discussed below:
\begin{proposition}\label{prop:1-loop-rel-M}
\begin{align*}
\mathcal{F}\left(
\Cont_{\Gamma_{g,[n]}}\pi_{*}\Omega_{g,M}+8\Cont_{\Gammaloop_{g-1,1,[n]}}\pi_{*}\Omega_{g,M}  \right) =0 
\end{align*}
for any monomial $M=\prod_{i=2}^{n}a_i^{b_i}$ of degree $D\leq 2g+1$.
\end{proposition}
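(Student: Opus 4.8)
The plan is to substitute the simplified expressions for the two contributions derived in the preceding subsections and reduce the assertion to a finite numerical identity among factorials. Recall that, after cancelling each $B_i$ against the part of $C_i$ equal to it, we obtained $\mathcal{F}\,\Cont_{\Gamma_{g,[n]}}\pi_*\Omega_{g,M} = A - \sum_{i=2}^n \widetilde C_i$, where $A$ is the $i=1$ summand of the first term and $\widetilde C_i$ denotes the two displayed $e$-sums that survive after the cancellation with $B_i$, while $\mathcal{F}\,\Cont_{\Gammaloop_{g-1,1,[n]}}\pi_*\Omega_{g,M}$ is given by \eqref{eqn:F-cont-ga-g-1[n]0}. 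Thus it suffices to establish the algebraic identity $A - \sum_{i=2}^n \widetilde C_i + 8\,\mathcal{F}\,\Cont_{\Gammaloop_{g-1,1,[n]}}\pi_*\Omega_{g,M} = 0$. Both sides are $\mathbb{Q}$-linear combinations of Taylor coefficients of products of $\cosh$, $\sinh$ and $\tfrac{\sinh(\,\cdot\,)}{(\,\cdot\,)}$, so after extracting the coefficient of $M$ this is a finite numerical identity, which I would verify for each fixed value of $D=\deg M$.

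To expose this identity I would first carry out three reductions. (a) In \eqref{eqn:F-cont-ga-g-1[n]0} the entire $x_{v^*}$-dependence lies in $\sum_{d\ge 1}\tfrac{d-1}{(2d)!}x_{v^*}^{2d}$, so extracting the coefficient of $x_{v^*}^{m^*+1}$ forces $m^*+1 = 2e$ and collapses the double sum over $(m,m^*)$ to a single sum over $e\ge 1$ with $m = 2g+2-D-2e$, leaving the explicit weight $\tfrac{e-1}{(2e)!}$ and the common factor $\tfrac{\sinh(a+x_v)}{a+x_v}\prod_{j=2}^n\tfrac{\sinh a_j}{a_j}$, where $a = \sum_{j=2}^n a_j$. (b) In $A$ the constant $-1$ in $\cosh(a+x_v)-1$ contributes the coefficient of $x_v^{2g+2-D}$ of an expression free of $x_v$, hence vanishes because $2g+2-D\ge 1$; so $A$ involves only $\cosh(a+x_v)$. (c) I would then split each $\widetilde C_i$ by the parity of its index and rewrite $\sinh(a_i)\prod_{j\ne i}\tfrac{\sinh a_j}{a_j} = a_i\prod_j\tfrac{\sinh a_j}{a_j}$ in the odd part and $\cosh(a_i)\prod_{j\ne i}\tfrac{\sinh a_j}{a_j} = (1 + a_i\partial_{a_i})\prod_j\tfrac{\sinh a_j}{a_j}$ in the even part; summing over $i$ and using that the Euler operator $\sum_i a_i\partial_{a_i}$ acts on the coefficient of any degree-$D$ monomial by multiplication by $D$, together with $a\cdot\tfrac{\sinh(a+x_v)}{a+x_v} = \sinh(a+x_v) - x_v\tfrac{\sinh(a+x_v)}{a+x_v}$, one rewrites $\sum_i\widetilde C_i$ so that it carries the same symmetric factor $\prod_{j=2}^n\tfrac{\sinh a_j}{a_j}$ as $A$ and the loop contribution.

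Finally I would apply Lemma~\ref{lem:cosh-e-indep}, together with its evident $\sinh$-analogue (with $\sinh$ in place of $\cosh$, proved in the same two lines), to normalize every extraction involving $\cosh(a+x_v)$ or $\sinh(a+x_v)$ to the single power $x_v^{2g+2-D}$; the residual terms of the form $[\tfrac{\sinh(a+x_v)}{a+x_v}\prod_j\tfrac{\sinh a_j}{a_j}]_{M\,x_v^{k}}$ at various $k$ do not simplify individually, but the particular combination occurring on the two sides telescopes by repeated application of Pascal's rule (Lemma~\ref{lem:Pascal-rule}), while Lemma~\ref{lem:sum-mm} is used once more to evaluate the remaining sums over $e$. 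What is left is then an equality of purely numerical coefficients, built from the $D$-dependent factorials and the weight $\tfrac{e-1}{(2e)!}$, in which the constant $8$ is exactly what balances the leading factors. The step I expect to be the main obstacle is (c): the trivial-graph contribution naturally produces ``single-marking'' factors $\cosh(a_i)$ and $\sinh(a_i)$ with no manifest symmetry, whereas the bouquet-loop contribution is symmetric in all markings, so reconciling the two via the Euler-operator rewriting — and then correctly tracking how the resulting degree factor $D$ interacts with the $D$-dependent factorials $(2g+2-D)!$ and $(4g-1+n-D)!$ in the coefficients — is where the bookkeeping is genuinely delicate; once it is organized in this way, the remaining verification is routine.
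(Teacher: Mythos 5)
Your step (a) contains a genuine error that derails the computation. In the loop contribution \eqref{eqn:cont-ga-g-1[n]0} (equivalently \eqref{eqn:F-cont-ga-g-1[n]0}), the variable $x_{v^*}$ does not enter only through the explicit factor $\sum_{d\ge1}\frac{d-1}{(2d)!}x_{v^*}^{2d}$: the first marking carries $\exp\bigl(\tfrac12(\sum_{j\ge2}a_j+x_v+x_{v^*})^2\psi_1\bigr)$, because the balancing condition forces $a_1=-(a_2+\dots+a_n+x_v+x_{v^*})$, so after applying $\mathcal F$ the factor $\frac{\sinh(a+x_v+x_{v^*})}{a+x_v+x_{v^*}}$ also depends on $x_{v^*}$. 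Hence extracting the coefficient of $x_{v^*}^{m^*+1}$ does not force $m^*+1=2e$, and the double sum over $(m,m^*)$ does not collapse as you claim; one must distribute the leftover power $x_{v^*}^{m^*+1-2e}$ into that first factor, which is exactly what produces the binomial $\binom{2g+2-D-2e}{m^*+1-2e}$ and then, via Lemma~\ref{lem:sum-mm}, the weight $\frac{(2g+1-D)!\,(4g+n-D)!}{(2g-2+n+2e)!}$ multiplying $\bigl[\frac{\sinh(a+x_v)}{a+x_v}\prod_{j\ge2}\frac{\sinh a_j}{a_j}\bigr]_{M\cdot x_v^{2g+2-D-2e}}$. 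Your collapse would instead give the weight $\frac{(e-1)(2g+1-D)!\,(4g-1+n-D-2e)!}{2\,(2g-3+n)!}$ (before the factor $8$), which is not the same (already for $g=1$, $n=2$, $D=0$, $e=1$ the two disagree), so with your normalization the cancellation against the trivial-graph terms fails and the claim that the constant $8$ ``balances the leading factors'' cannot be carried through. Relatedly, Lemma~\ref{lem:sum-mm} is not used ``once more to evaluate the remaining sums over $e$''; it is needed precisely at this $(m,m^*)$-sum you discarded.

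Apart from this, your step (c) is in spirit the same mechanism the paper uses, only run in the opposite direction: the paper rewrites the symmetric loop contribution via the decomposition of $-4e$ and the operators $\Theta=\bigl(x_v\frac{d}{dx_v}+1\bigr)+\sum_i\bigl(a_i\frac{d}{da_i}+1\bigr)$, producing exactly the non-symmetric $\cosh(a_i)$- and $\frac{d}{dx_v}$-terms that cancel the surviving pieces of the $C_i$, and then evaluates the leftover $\cosh(a+x_v)$-sum using Lemma~\ref{lem:cosh-e-indep} and a Pascal-rule telescope. You flag this reconciliation as the delicate point but do not carry it out, and the assertions that the combination ``telescopes'' and that the final numerical identity holds are precisely the nontrivial content of the proposition; together with the incorrect weight from (a), the proposal as written does not establish the identity.
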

It is further not hard to verify that Proposition~\ref{prop:1-loop-rel-M} also implies that the gluing-map push-forwards of the relations $\pi_{*}\Omega_{g,M}$ must satisfy \eqref{eq:bouquet-identity}.
We conclude that given Proposition~\ref{prop:1-loop-rel-M}, all TRRs as in Corollary~\ref{cor:deg-d-TRR-uniform} satisfy \eqref{eq:bouquet-identity}.
This completes the proof of Theorem~\ref{thm:coeff-bouquet-general}, and thus Theorem~\ref{thm:coeff-bouquet}.

\begin{proof}[Proof of Proposition~\ref{prop:1-loop-rel-M}]
By using  Lemma~\ref{lem:sum-mm} and equation~\eqref{eqn:F-cont-ga-g-1[n]0}, we have
\begin{align*}
&8\mathcal{F}\Cont_{\Gamma_{g-1,1,[n]}}\pi_*\Omega_{g,M}
\\=&-4\sum_{e = 1}^\infty \frac{e - 1}{(2e)!} \sum_{m + m^* = 2g + 1 - D} \frac{(m^* + 1)!(2g + 1 - D)!(2g - 3 + n + m)!}{(2g - 3 + n)!} \\
  &\hspace{20pt}\cdot\left[\prod_{j = 1}^n \left(\sum_{d = 0}^\infty \frac{a_i^{2d}}{(2d + 1)!}\right) \right]_{M \cdot x_{v^*}^{m^* + 1 - 2e} x_v^{m}} \\
  =&-4\sum_{e = 1}^\infty \frac{e - 1}{(2e)!} \sum_{m + m^* = 2g + 1 - D} \frac{(m^* + 1)!(2g + 1 - D)!(2g - 3 + n + m)!}{ (2g - 3 + n)!} \binom{2g + 2 - D - 2e}{m^* + 1 - 2e} \\
  &\hspace{20pt}\cdot\left[\left(\sum_{d = 0}^\infty \frac{(a +x_{v^*}+ x_v)^{2d}}{(2d + 1)!}\right) \prod_{j = 2}^n \left(\sum_{d = 0}^\infty \frac{a_i^{2d}}{(2d + 1)!}\right) \right]_{M \cdot x_{v^*}^{m^* + 1 - 2e} x_v^{m}} \\
  = &-4\sum_{e = 1}^\infty \frac{(e - 1)(2g + 1 - D)! (4g + n - D)!}{(2g - 2 + n + 2e)!} \left[\left(\sum_{d = 0}^\infty \frac{(a + x_v)^{2d}}{(2d + 1)!}\right) \prod_{i = 2}^n \left(\sum_{d = 0}^\infty \frac{a_i^{2d}}{(2d + 1)!}\right) \right]_{M \cdot x_v^{2g + 2 - D - 2e}} \\
  = &\sum_{e = 0}^\infty \frac{(-4e)(2g + 1 - D)! (4g + n - D)!}{(2g + n + 2e)!}
  \left[\frac{\sinh(a + x_v)}{a + x_v } \prod_{i = 2}^n \frac{\sinh(a_i)}{a_i} \right]_{M \cdot x_v^{2g - D - 2e}}.
\end{align*}

We may write
\begin{equation*}
  -4e = (2g - 2e + n) \left(1 - \frac{2g + n + 2e}{2g - 2e - D + 1}\right) + \frac{(D + n - 1)(2g + n + 2e)}{2g - 2e - D + 1}.
\end{equation*}
Introduce differential operator
\begin{equation*}
  \Theta = \left(x_v \frac{d}{dx_v} + 1\right) + \sum_{i = 2}^n \left(a_i \frac d{da_i} + 1\right).
\end{equation*}
Then,
\begin{equation*}
  -4e Mx_v^{2g - D - 2e} = \left(\left(1 - \frac{2g + n + 2e}{2g - 2e - D + 1}\right)\Theta + \frac{2g + n + 2e}{2g - 2e - D + 1} \sum_{i = 2}^n \left(a_i \frac d{da_i} + 1\right)\right) Mx_v^{2g - D - 2e}.
\end{equation*}
Note that
\begin{equation*}
  \Theta \left(\frac{\sinh(a + x_v)}{a + x_v } \prod_{i = 2}^n \frac{\sinh(a_i)}{a_i}\right)
  = \cosh(a + x_v) \prod_{i = 2}^n \frac{\sinh(a_i)}{a_i}
  + \sum_{i = 2}^n \frac{\sinh(a + x_v)\cosh(a_i)}{a + x_v } \prod_{\substack{j = 2 \\ j \neq i}}^n \frac{\sinh(a_j)}{a_j}
\end{equation*}
and
\begin{align*}
  &\left(a_i \frac d{da_i} + 1\right) \left(\frac{\sinh(a + x_v)}{a + x_v } \prod_{i = 2}^n \frac{\sinh(a_i)}{a_i}\right) 
  \\
  =& \left(\frac{a_i \cosh(a + x_v)}{a + x_v}
  - \frac{a_i \sinh(a + x_v)}{(a + x_v)^2}\right) \prod_{i = 2}^n \frac{\sinh(a_i)}{a_i}
  + \frac{\sinh(a + x_v)\cosh(a_i)}{a + x_v } \prod_{\substack{j = 2 \\ j \neq i}}^n \frac{\sinh(a_j)}{a_j} \\
  =& \frac d{dx_v} \frac{a_i \sinh(a + x_v)}{a + x_v} \prod_{i = 2}^n \frac{\sinh(a_i)}{a_i}
  + \frac{\sinh(a + x_v)\cosh(a_i)}{a + x_v } \prod_{\substack{j = 2 \\ j \neq i}}^n \frac{\sinh(a_j)}{a_j}.
\end{align*}
So we have
\begin{align*}
  -\sum_{e = 0}^\infty \frac{4e(2g + 1 - D)! (4g + n - D)!}{(2g + n + 2e)!}
  \left[\frac{\sinh(a + x_v)}{a + x_v } \prod_{i = 2}^n \frac{\sinh(a_i)}{a_i} \right]_{M \cdot x_v^{2g - D - 2e}}
  = A' + \sum_{i = 2}^n (B_i' + C_i'),
\end{align*}
where
\begin{align*}
  A' &= \sum_{e = 0}^\infty \frac{(2g + 1 - D)! (4g + n - D)!}{(2g + n + 2e)!} \left(1 - \frac{2g + n + 2e}{2g - 2e - D + 1}\right)
       \left[\cosh(a + x_v) \prod_{i = 2}^n \frac{\sinh(a_i)}{a_i} \right]_{M \cdot x_v^{2g - D - 2e}}, \\
  B_i' &= \sum_{e = 0}^\infty \frac{(2g + 1 - D)! (4g + n - D)!}{(2g + n + 2e)!}
         \left[\frac{\sinh(a + x_v) \cosh(a_i)}{a + x_v} \prod_{\substack{j = 2 \\ j \neq i}}^n \frac{\sinh(a_j)}{a_j} \right]_{M \cdot x_v^{2g - D - 2e}}, \\
  C_i' &= \sum_{e = 0}^\infty \frac{(2g + 1 - D)! (4g + n - D)!}{(2g + n + 2e)!} \frac{2g + n + 2e}{2g - 2e - D + 1}
         \left[\frac d{dx_v} \frac{a_i \sinh(a + x_v)}{a + x_v} \prod_{i = 2}^n \frac{\sinh(a_i)}{a_i} \right]_{M \cdot x_v^{2g - D - 2e}}.
\end{align*}
We may rewrite $C_i'$ slightly
\begin{equation*}
  C_i' = \sum_{e = 0}^\infty \frac{(2g + 1 - D)! (4g + n - D)!}{(2g + n + 2e - 1)!}
         \left[\frac{a_i \sinh(a + x_v)}{a + x_v} \prod_{i = 2}^n \frac{\sinh(a_i)}{a_i} \right]_{M \cdot x_v^{2g - D - 2e + 1}}.
\end{equation*}
Therefore,
\begin{equation*}
  B_i - C_i + B_i' + C_i' = 0.
\end{equation*}
Furthermore,
applying the Lemma~\ref{lem:cosh-e-indep}, we see
\begin{equation*}
  A' = c \cdot \left[\cosh(a + x_v) \prod_{i = 2}^n \frac{\sinh(a_i)}{a_i} \right]_{M \cdot x_v^{2g + 2 - D}},
\end{equation*}
where
\begin{align*}
  c &= \sum_{e = 0}^\infty \frac{(2g + 1 - D)! (4g + n - D)!}{(2g + n + 2e)!} \frac{(2g + 2 - D)!}{(2g - D - 2e)!} \left(1 - \frac{2g + n + 2e}{2g - 2e - D + 1}\right)
\\=&(2g+1-D)!(2g+2-D)!\sum_{e=0}^{\infty}\left(\binom{4g+n-D}{2g+n+2e}-\binom{4g+n-D}{2g+n+2e-1}\right)
    \\=&(2g+1-D)!(2g+2-D)!\sum_{e=0}^{\infty}\left(\binom{4g+n-D-1}{2g+n+2e}-\binom{4g+n-D-1}{2g+n+2e-2}\right)
    \\=&-(2g+1-D)!(2g+2-D)!\binom{4g+n-D-1}{2g+n-2}
    \\=&-\frac{(2g+2-D)!(4g+n-D-1)!}{(2g+n-2)!},
\end{align*}
where in the third equality we used Pascal's rule (see Lemma~\ref{lem:Pascal-rule})  for the two terms 
\begin{align*}
\binom{4g+n-D}{2g+n+2k},\,\,\binom{4g+n-D}{2g+n+2k-1}     
\end{align*}
respectively. 
Therefore $A + A' = 0$.
\end{proof}

\section{The coefficients of rational tail classes in topological recursion relations}
\label{sec:boun-g-2}

In this section, we study applications of Faber's intersection number conjecture to topological recursion relations to yield linear relations satisfied by the rational tail coefficients of topological recursion relations.
We will assume $g \ge 2$ throughout this section.

\subsection{Faber's intersection number conjecture}

In 1993, C. Faber  proposed remarkable conjectures about the structure of
tautological ring $R^*(\mathcal{M}_{g})$ (cf. \cite{faber1999conjectural}).
An important part of Faber's conjectures is the famous Faber intersection number
conjecture, which is the following relation in $R^{g-2}(\mathcal{M}_{g})$, if $\sum_{i=1}^{n}k_i=g-2$, 
\begin{align*}
\pi_*(\psi_1^{k_1+1}\dots \psi_{n}^{k_n+1})=\frac{(2g-3+n)!(2g-1)!!}{(2g-1)!\prod_{i=1}^{n}(2k_i+1)!!} \kappa_{g-2}   
\end{align*}
where $\pi: \mathcal{M}^{rt}_{g,n}\rightarrow\mathcal{M}_{g}$ is the forgetful morphism and $\mathcal{M}^{rt}_{g,n}\subset\overline{\mathcal{M}}_{g,n}$ is the partial compactification of $\mathcal{M}_{g,n}$ by stable nodal
 curves with rational tails.
By now, there are many proofs of Faber's intersection number conjecture (cf.\ \cite{liu2009proof}, \cite{buryak2011new}, \cite{garcia2022curious}). It was pointed out in \cite{liu2009proof} that Faber's intersection number conjecture is equivalent to the following identity of Hodge integrals
\begin{align}\label{eqn:faber-int-conj-hodge}
\int_{\overline{\mathcal{M}}_{g,n}}\psi_1^{k_1}\dots \psi_{n}^{k_n}\lambda_g\lambda_{g-1}=\frac{(2g-3+n)!
|B_{2g}|}{2^{2g-1}(2g)!\prod_{i=1}^{n}(2k_i-1)!!}   
\end{align}
for any $\sum_{i=1}^{n}k_i=g-2+n$.

\subsection{Proof of Theorem~\ref{thm:trr-g-2}}
For $g=1$, the genus 1 topological recursion relation on $\overline{\mathcal{M}}_{1,n}$ is obtained by pullback from the fundamental topological recursion relation $\psi_1=\frac{1}{12}\delta_{irr}\in R^1(\overline{\mathcal{M}}_{1,1})$, where $\delta_{irr}$ is the irreducible boundary divisor.
For $g\geq2, n=2$, assume that the genus $g$ topological recursion relation has the form
\begin{align}\label{eqn:thm-trr-g-2-assume}
\psi_1^{k_1}\psi_2^{k_2}=c_{g,2}\cdot\left(\xi_{\Gamma_{g;0,\{1,2\}}}\right)_*\psi_{h(e)}^{g-1}+\cdots
\end{align}
where $\Gamma_{g;0,\{1,2\}}$ is the stable graph having two vertices connected by one edge, with one genus-$g$ vertex and one genus-0 vertex with markings $\{1,2\}$, and $h(e)$ is the half edge attached to the genus-$g$ vertex.  
Multiplying by $\lambda_{g}\lambda_{g-1}$ on both sides of \eqref{eqn:thm-trr-g-2-assume}, and using properties of Hodge classes, we obtain
\begin{align*}
\int_{\overline{\mathcal{M}}_{g,2}}\psi_1^{k_1}\psi_{2}^{k_2}\lambda_g\lambda_{g-1}=c_{g,2}\int_{\overline{\mathcal{M}}_{g,1}}\psi_1^{g-1}\lambda_g\lambda_{g-1}.   
\end{align*}
By \eqref{eqn:faber-int-conj-hodge}, we have
\begin{align*}
\frac{(2g-1)!
|B_{2g}|}{2^{2g-1}(2g)!\prod_{i=1}^{2}(2k_i-1)!!}=\frac{(2g-2)!
|B_{2g}|}{2^{2g-1}(2g)!(2g-3)!!}  \cdot c_{g,2},  
\end{align*}
thus
\[c_{g,2}=\frac{(2g-1)!!}{(2k_1-1)!!(2k_2-1)!!},\]
as desired.
For $g\geq2, n\geq3$, by Corollary~\ref{cor:deg-d-TRR-uniform}, there exists a degree $g$ topological recursion relation of the form
\begin{align}\label{eqn:deg=g-rational}
\prod_{i=1}^{n}\psi_i^{k_i}=a_0\xi_{*}(\psi_{\bullet}^{g-1})+\sum_{i<j}a_{ij}\xi_{ij*}(\psi^{g-2}_{\bullet})+\cdots, \end{align}
where $\xi:\overline{\mathcal{M}}_{g,\{\bullet\}}\times\overline{\mathcal{M}}_{0,\bar{\bullet}\cup\{1,\dots,n\}}\rightarrow \overline{\mathcal{M}}_{g,n}$  and $\xi_{ij}:\overline{\mathcal{M}}_{g,\{\bullet\}}\times\overline{\mathcal{M}}_{0,\bar{\bullet}\cup\{1,\dots,\hat{i},\dots,\hat{j},\dots,n\}\cup\circ}\times\overline{\mathcal{M}}_{0,\{\bar{\circ},i,j\}}\rightarrow \overline{\mathcal{M}}_{g,n}$ are the gluing maps. The omitted terms consist of other boundary classes with no $\kappa$ classes and no genus-0 vertex decorated by $\psi$ classes.
Multiplying by $\lambda_g\lambda_{g-1}\cdot\xi_{*}({\psi_{\bar{\bullet}}}^{n-3})$ on both sides of equation~\eqref{eqn:deg=g-rational},
note that by product formula~\eqref{eqn:stra-prod} and degree reason,
\begin{align*}
\xi_{*}(\psi_{\bullet}^{g-1})\cdot\xi_{*}(\psi_{\bar{\bullet}}^{n-3})=\xi_{*}(\psi_{\bullet}^{g-1}\psi_{\bar{\bullet}}^{n-3}(-\psi_{\bullet}-\psi_{\bar{\bullet}}))   
\end{align*}
and
\begin{align*}
\xi_{ij*}(\psi_{\bullet}^{g-2})\cdot\xi_{*}(\psi_{\bar{\bullet}}^{n-3})=\xi_{ij*}(\psi_{\bullet}^{g-2}\psi_{\bar{\bullet}}^{n-3}(-\psi_{\bullet}-\psi_{\bar{\bullet}})).    
\end{align*}
Integrating \eqref{eqn:deg=g-rational} over $\overline{\mathcal{M}}_{g,n}$ yields
\begin{align*}
  0=a_0+\sum_{1\leq i<j\leq n}a_{ij}.
\end{align*}
In more detail, by a direct computation, we have
\begin{align*}
\int_{\overline{\mathcal{M}}_{g,n}}\prod_{i=1}^{n}\psi_{i}^{k_i} \cdot \xi_{*}(\psi_{\bar{\bullet}}^{n-3})\lambda_{g}\lambda_{g-1}=\int_{\overline{\mathcal{M}}_{g,\{\bullet\}}}\lambda_{g}\lambda_{g-1} 
\int_{\overline{\mathcal{M}}_{0,\bar{\bullet}\cup\{1,\dots,n\}}}\psi_{\bar{\bullet}}^{n-3}\prod_{i=1}^{n}\psi_{i}^{k_i}=0,
\end{align*}
\begin{align*}
&\int_{\overline{\mathcal{M}}_{g,n}}
\xi_{*}(\psi_{\bullet}^{g-1})\xi_{*}(\psi_{\bar{\bullet}}^{n-3})\lambda_{g}\lambda_{g-1}=\int_{\overline{\mathcal{M}}_{g,n}}
\xi_{*}(\psi_{\bullet}^{g-1}\psi_{\bar{\bullet}}^{n-3}(-\psi_{\bullet}-\psi_{\bar{\bullet}}))\lambda_{g}\lambda_{g-1}
\\=&-\int_{\overline{\mathcal{M}}_{g,\{\bullet\}}}\psi_{\bullet}^{g-1}\lambda_{g}\lambda_{g-1}\int_{\overline{\mathcal{M}}_{0,\bar{\bullet}\cup\{1,\dots,n\}}}\psi_{\bar{\bullet}}^{n-2}
\\=&-\frac{(2g-2)!
|B_{2g}|}{2^{2g-1}(2g)!(2g-3)!!},
\end{align*}
and for every $1 \le i < j \le n$, we have
\begin{align*}
&\int_{\overline{\mathcal{M}}_{g,n}}\xi_{ij*}(\psi_{\bullet}^{g-2})\xi_{*}(\psi_{\bar{\bullet}}^{n-3})\lambda_{g}\lambda_{g-1}
=\int_{\overline{\mathcal{M}}_{g,n}}\xi_{ij*}(\psi_{\bullet}^{g-2}\psi_{\bar{\bullet}}^{n-3}(-\psi_{\bullet}-\psi_{\bar{\bullet}}))\lambda_{g}\lambda_{g-1}
\\=&-\int_{\overline{\mathcal{M}}_{g,\{\bullet\}}}\psi_{\bullet}^{g-1}\lambda_{g}\lambda_{g-1}\int_{\overline{\mathcal{M}}_{0,\bar{\bullet}}\cup\{1,\dots,\hat{i},\dots,\hat{j},\dots,n\}\cup\circ}\psi_{\bullet}^{n-3}
\\=&-\frac{(2g-2)!|B_{2g}|}{2^{2g-1}(2g)!(2g-3)!!}.
\end{align*}
In addition, it is not hard to see 
the intersection of all omitted terms and $\lambda_g\lambda_{g-1}\cdot\xi_{*}({\psi_{\bar{\bullet}}}^{n-3})$ vanishes.
This finishes the proof of Theorem~\ref{thm:trr-g-2}.

\section{Applications to Gromov--Witten theory}
\label{sec:app-gw}

In this section, we disscus some applications of topological recursion relations in Gromov--Witten theory.
Let $X$ be a smooth projective variety.
By definition, the {\it small phase space} is the vector space $H^{*}(X,\mathbb{C})$ and the {\it big phase space} is defined to be
$\mathcal{P}:=\prod_{n=0}^{\infty} H^{*}(X,\mathbb{C})$.
For simplicity, we assume $H^{\text{odd}}(X,\mathbb{C})=0$.
Let $\{\phi_{1},\dots,\phi_{N}\}$ be a fixed basis of $H^{*}(X,\mathbb{C})$, where $\phi_{1}$
is the identity element of the cohomology ring of $M$.
The  corresponding
basis for the $n$-th copy of $H^{*}(X,\mathbb{C})$ in this product is denoted by
$\{ \tau_{n}(\phi_\alpha) \mid \alpha=1, \ldots, N\}$ for $ n \geq 0$.
Let $\{t_{n}^{\alpha}\}$ be the coordinates on $\mathcal{P}$
with respect to the standard basis $\{ \tau_{n}(\phi_\alpha) \mid \alpha=1, \ldots, N, \,\,\, n \geq 0\}$.
 We will also identify the small phase space with
the subspace of $\mathcal{P}$ defined by $t_n^{\alpha}=0$ for $n>0$.
%
%
%
%The genus-$g$ potential function $F_{g}$
% is
%a formal power series of $t=(t_{n}^{\alpha})$ with coefficients given by
%$\gwig{\grav{n_1}{\alpha_1} \cdots \grav{n_k}{\alpha_k}}$.
%Derivatives of
%$F_{g}$ with respect to $t_{n_{1}}^{\alpha_{1}}, \ldots, t_{n_{k}}^{\alpha_{k}}$ are denoted by
%$\gwiig{\tau_{n_{1}}(\alpha_{1}) \cdots \tau_{n_{k}}(\alpha_{k})}$.
We will identify $\tau_{n}(\phi_{\alpha})$ with the vector field $\frac{\partial}{\partial t_{n}^{\alpha}}$ on the big phase space.
If $n<0$, $\tau_{n}(\phi_{\alpha})$ is understood to be the zero vector field.
We will also write $\tau_{0}(\phi_{\alpha})$ simply as $\phi_{\alpha}$.
%Any vector field of the form $\sum_{\alpha}f_{\alpha}\phi_{\alpha}$, where $f_{\alpha}$ are functions on the big phase space, is called a $primary$ $vector$ $field$.
We use $\tau_{+}$ and $\tau_{-}$ to denote the operators which shift the level of descendants by $1$, i.e.
$$\tau_{\pm}\Big(\sum_{n,\alpha}f_{n,\alpha}\tau_{n}(\phi_{\alpha})\Big)
=\sum_{n,\alpha}f_{n,\alpha}\tau_{n\pm1}(\phi_{\alpha})$$ where $f_{n,\alpha}$ are functions on the big phase space.

Let $\eta=(\eta_{\alpha\beta})$ be the matrix of the intersection pairing on $H^{*}(X;\mathbb{C})$  in the basis $\{\phi_{1},\dots,\phi_{N}\}$.
We will use
$\eta=(\eta_{\alpha\beta})$ and $\eta^{-1}=(\eta^{\alpha\beta})$ to lower and  raise indices, respectively, for example $\phi^{\alpha}:=\eta^{\alpha\beta}\phi_{\beta}$ for any $\alpha$.
Here we use the Einstein summation convention that repeated indices should be summed over their entire ranges.

Let
\[\langle\tau_{n_{1}}(\phi_{\alpha_{1}})\cdot\cdot\cdot \tau_{n_{k}}(\phi_{\alpha_{k}})\rangle_{g,k,\beta}
:=\int_{[\overline{\mathcal{M}}_{g,k}(X,\beta)]^{vir}}\bigcup_{i=1}^{k}(\Psi_{i}\cup ev_{i}^{*}(\phi_{\alpha_{i}})) \]
be the genus-$g$, degree-$\beta$, descendant Gromov--Witten invariant associated to $\phi_{\alpha_{1}},\dots,\phi_{\alpha_{k}}$ and nonnegative integers $n_{1},\dots,n_{k}$ (cf.\  \cite{li1998virtual}). Here $\overline{\mathcal{M}}_{g,n}(X,\beta)$ is the moduli space of stable maps from genus-$g$, $k$-marked curves to $X$ of degree $\beta\in H_{2}(X;\mathbb{Z})$  and $[\overline{\mathcal{M}}_{g,n}(X,\beta)]^{vir}$ is its virtual fundamental class.
$\Psi_{i}$ is the first Chern class of the tautological line bundle over $\overline{\mathcal{M}}_{g,n}(X,\beta)$ whose geometric fiber is the cotangent space of the domain curve at the $i$-th marked point and  $ev_{i}\colon \overline{\mathcal{M}}_{g,n}(X,\beta)\rightarrow X$ is the $i$-th evaluation map for all $i=1,\dots,k$.
The genus-$g$ generating function is defined to be
\begin{align*}
F_{g}:=\sum_{k\geq0}\sum_{\alpha_{1},\dots,\alpha_{k}}
\sum_{n_{1},\dots,n_{k}}
\frac{1}{k!}t_{n_{1}}^{\alpha_{1}}\cdot\cdot\cdot t_{n_{k}}^{\alpha_{k}}\sum_{\beta}q^{\beta}
\left<\tau_{n_{1}}(\phi_{\alpha_{1}})\cdot\cdot\cdot \tau_{n_{k}}(\phi_{\alpha_{k}})\right>_{g,k,\beta}
\end{align*}
where $q^{\beta}$ belongs to the Novikov ring. Here we set all the undefined terms in $F_g$ to be zero.
This function is understood as a formal power series in the variables $\{t_{n}^{\alpha}\}$ with coefficients in the Novikov ring.

Define a $k$-tensor $\langle\langle\cdot\cdot\cdot\rangle\rangle_g$ by
\begin{align*}
\langle\langle{W_{1}W_{2}\cdot\cdot\cdot W_{k}}\rangle\rangle_g
:=\sum_{m_{1},\alpha_{1},\dots,m_{k},\alpha_{k}}f_{m_{1},\alpha_{1}}^{1}
\cdot\cdot\cdot f_{m_{k},\alpha_{k}}^{k}\frac{\partial^{k}}{\partial t_{m_{1}}^{\alpha_{1}}\cdot\cdot\cdot \partial t_{m_{k}}^{\alpha_{k}}}F_{g}
\end{align*}
for vector fields $W_{i}=\sum_{m,\alpha}f_{m,\alpha}^{i}\frac{\partial}{\partial t_{m}^{\alpha}}$ where $f_{m,\alpha}^{i}$ are functions on the big phase space. 

For any vector fields $W_{1}$ and $W_{2}$ on the big phase space, the {\it quantum product} of $W_{1}$ and $W_{2}$ is defined by
$$W_{1}\bullet W_{2}:=\sum_{\alpha}\langle\langle{W_{1}W_{2}
\phi^{\alpha}}\rangle\rangle_0\phi_{\alpha}.$$

It is well known that topological recursion relations in $R^*(\overline{\mathcal{M}}_{g,n})$ can be translated into universal equations for
Gromov--Witten invariants via the splitting axiom and cotangent line comparison equations.
Define the operator $T$ on the space of vector fields by
\begin{align}\label{eqn:T-operator}
T(W)=\tau_{+}(W)-\sum_{\alpha}\langle\langle W\phi^\alpha\rangle\rangle_0\phi_\alpha.    
\end{align}
The operator is very useful for translating topological recursion relations  into universal equations (cf.\ \cite{liu2006gromov}).
In the process, each marked point corresponds to a vector field, and the cotangent line class corresponds to the operator $T$.
Each node is translated into a pair
of primary vector fields $\phi_\alpha$ and $\phi^\alpha$.
For  the convenience of the reader, we give a proof of the translation rule in Appendix~\ref{subsec:anc-des}.

It is well known that such universal equations play an important role in computing higher genus Gromov--Witten invariants and the study of the famous Virasoro conjecture (cf.\ \cite{liu2002quantum}).
It is also conjectured the collection of all universal equations determines all higher genus Gromov--Witten invariants  in terms of genus-0 Gromov--Witten invariants in the semisimple case (cf.\ \cite{liu2006gromov}), which differs from the approach of Givental (cf.\ \cite{givental2001semisimple}) or Dubrovin-Zhang (cf.\ \cite{dubrovin1998bihamiltonian}).   
Thus it is important to find as explicit as possible formulas for such universal  equations, even though they tend to be complicated in general.

Theorem~\ref{thm:coeff-bouquet} implies:
\begin{corollary}\label{cor:bouquet-correlation-fun}
For any smooth projective variety $X$ and  non-negative integers $\{k_i\}_{i=1}^{n}$ satisfying $\sum_{i=1}^{n}k_i=g$, there exists a universal equation between its Gromov--Witten invariants of the form
\begin{align*}
\langle\langle T^{k_1}(W_1)\ldots T^{k_n}(W_n)\rangle\rangle_g =\frac{1}{8^g \prod_{i=1}^{n}(2k_i+1)!!}\sum_{\alpha_1,\dots ,\alpha_g=1}^{N}\langle\langle W_1\ldots W_n\phi_{\alpha_1}\phi^{\alpha_1}\ldots \phi_{\alpha_g}\phi^{\alpha_g}\rangle\rangle_0+\cdots
\end{align*}
 The omitted terms in the above formula are polynomials of tensors $\{\langle\langle\ldots\rangle\rangle_h: 0\leq h\leq g\}$ which satisfy the following condition: there are no
 genus-$h$ tensors $\langle\langle\ldots\rangle\rangle_h$  with insertions  of operator $T$  of degree $\geq h+\delta_{h}^0$ for
$0\leq h\leq g$.
\end{corollary}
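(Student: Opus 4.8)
The plan is to obtain Corollary~\ref{cor:bouquet-correlation-fun} as a direct consequence of Theorem~\ref{thm:coeff-bouquet} by applying the standard translation of topological recursion relations into universal equations for Gromov--Witten invariants. Recall that Theorem~\ref{thm:coeff-bouquet} produces, for any $\{k_i\}_{i=1}^n$ with $\sum_i k_i = g$, a topological recursion relation of the form
\[
\prod_{i=1}^n \psi_i^{k_i} \;=\; \frac{1}{8^g \prod_{i=1}^n (2k_i+1)!!}\,\bigl(\xi_{\Gammaloop_{0,g,[n]}}\bigr)_*(1)\;+\;\cdots,
\]
where the omitted terms carry no $\kappa$ classes and no genus-$h$ vertex decorated by a $\psi$-monomial of total degree $\ge h+\delta_h^0$. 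To it I would apply the dictionary whose proof is given in Appendix~\ref{subsec:anc-des}: the comparison between the cotangent line class $\Psi_i$ on $\overline{\mathcal{M}}_{g,n}(X,\beta)$ and the pullback of $\psi_i$ from $\overline{\mathcal{M}}_{g,n}$ replaces the $\psi$-power at the $i$-th marked point by the operator $T^{k_i}$ acting on the insertion $W_i$; and the splitting of the virtual class along boundary strata replaces a decorated graph $[\Gamma,\gamma]$ by a sum over one cohomology index $\alpha_e$ per edge $e$ of the product over vertices of correlators $\langle\langle\,\cdots\,\rangle\rangle_{g(v)}$, with the two half-edges of $e$ contributing the dual pair $\phi_{\alpha_e}$, $\phi^{\alpha_e}$, and the $\psi$-monomial $\gamma_v$ at a genus-$h$ vertex turning into $T$-operators of the same total degree applied to the leg and node insertions at that vertex.

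Carrying this out amounts to three routine checks. The left-hand side $\prod_i \psi_i^{k_i}$ becomes $\langle\langle T^{k_1}(W_1)\cdots T^{k_n}(W_n)\rangle\rangle_g$. The bouquet graph $\Gammaloop_{0,g,[n]}$ consists of a single genus-$0$ vertex carrying all $n$ legs and $g$ loops, with no $\kappa$- or $\psi$-decoration, so under the dictionary it becomes $\sum_{\alpha_1,\dots,\alpha_g}\langle\langle W_1\cdots W_n\,\phi_{\alpha_1}\phi^{\alpha_1}\cdots\phi_{\alpha_g}\phi^{\alpha_g}\rangle\rangle_0$; here the automorphism factor $|\Aut(\Gammaloop_{0,g,[n]})| = 2^g g!$ cancels exactly against the degree of the normalization of the associated boundary stratum, so the coefficient $\frac{1}{8^g \prod_i (2k_i+1)!!}$ is carried over verbatim (just as $\psi_1 = \frac{1}{24}\bigl(\xi_{\Gammaloop_{0,1,[1]}}\bigr)_*(1)$ on $\overline{\mathcal{M}}_{1,1}$ translates to a relation of the form $\langle\langle T(W_1)\rangle\rangle_1 = \frac{1}{24}\sum_{\alpha}\langle\langle W_1\phi_\alpha\phi^\alpha\rangle\rangle_0 + \cdots$). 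Finally, each omitted boundary term, carrying no $\kappa$ classes and no genus-$h$ vertex with $\psi$-monomial of total degree $\ge h+\delta_h^0$, becomes a monomial in tensors $\langle\langle\,\cdots\,\rangle\rangle_h$ with $0 \le h \le g$ in which no genus-$h$ tensor carries $T$-operators of total degree $\ge h+\delta_h^0$, because the dictionary sends the total $\psi$-degree at a genus-$h$ vertex (over both legs and node half-edges) to the total $T$-degree of the corresponding tensor; in particular, since genus-$0$ vertices of the omitted terms carry no $\psi$ classes at all, the resulting genus-$0$ tensors carry no $T$'s.

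The one genuinely substantive ingredient is the translation rule itself --- the comparison of cotangent line classes on the moduli of stable maps with those pulled back from $\overline{\mathcal{M}}_{g,n}$, and the splitting of the virtual class --- but this is a standard mechanism, proved self-containedly in Appendix~\ref{subsec:anc-des}; granting it, the corollary is a formal consequence of Theorem~\ref{thm:coeff-bouquet}. The only points calling for care in the write-up are the bookkeeping of the loop automorphism factor of the bouquet graph, so that the coefficient emerges literally unchanged, and the verification that the degree restriction on the boundary terms of Theorem~\ref{thm:coeff-bouquet} maps exactly onto the stated restriction on the $T$-degrees of the tensors.
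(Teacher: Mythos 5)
Your proposal is correct and follows exactly the route the paper takes: the paper offers no separate proof beyond the statement that Theorem~\ref{thm:coeff-bouquet} implies the corollary via the standard translation rule (cotangent line classes become $T$-operators, nodes become pairs $\phi_\alpha,\phi^\alpha$), whose justification is the ancestor--descendant lemma and splitting axioms of Appendix~\ref{subsec:anc-des} --- precisely the dictionary you invoke. Your bookkeeping of the coefficient is also right (the projection formula plus iterated splitting applied to $\bigl(\xi_{\Gammaloop_{0,g,[n]}}\bigr)_*(1)$ carries the constant over verbatim, as your $\overline{\mathcal{M}}_{1,1}$ sanity check confirms), so nothing further is needed.
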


For a function $f$  on the big phase space, we say $f\overset{g}{\approx}0$ if $f$ can be
expressed as a polynomial of $F_0,F_1,\dots,F_{g-1}$ and their derivatives.
Theorem~\ref{thm:trr-g-2} implies the following:
\begin{corollary}\label{cor:two-pt-correlation-fun}
For any smooth projective variety and  any non-negative integers $k_1, k_2$ such that $k_1+k_2=g$, 
 there exists a universal equation between its Gromov--Witten invariants of the form
\begin{align*}
\langle\langle T^{k_1}(W_1)T^{k_2}(W_2)\rangle\rangle_g  -\frac{(2g-1)!!}{(2k_1-1)!! (2k_2-1)!!}\langle\langle T^{g-1}(W_1\bullet W_2)\rangle\rangle_g\overset{g}{\approx}0.  
\end{align*}
\end{corollary}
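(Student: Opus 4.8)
The plan is to specialize Theorem~\ref{thm:trr-g-2} to $n = 2$ and feed the resulting tautological relation into the dictionary translating topological recursion relations in $R^*(\overline{\mathcal{M}}_{g,n})$ into universal equations for Gromov--Witten invariants, as recalled in Appendix~\ref{subsec:anc-des} (following Liu~\cite{liu2006gromov}). Theorem~\ref{thm:trr-g-2} provides, for $k_1 + k_2 = g \ge 1$, a topological recursion relation on $\overline{\mathcal{M}}_{g,2}$ of the form
\begin{equation*}
  \psi_1^{k_1}\psi_2^{k_2} = \frac{(2g-1)!!}{(2k_1-1)!!\,(2k_2-1)!!}\,\xi_*\bigl(\psi_\bullet^{g-1}\bigr) + \cdots,
\end{equation*}
where $\xi\colon \overline{\mathcal{M}}_{g,\{\bullet\}} \times \overline{\mathcal{M}}_{0,\{\bar\bullet,1,2\}} \to \overline{\mathcal{M}}_{g,2}$ is the rational-tails gluing map and the omitted boundary classes carry no $\kappa$-classes and no $\psi$-decoration on any genus-$0$ vertex.

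I would then translate the relation term by term. Under the dictionary the left-hand side becomes $\langle\langle T^{k_1}(W_1)\,T^{k_2}(W_2)\rangle\rangle_g$, the operator $T$ encoding precisely the difference between the $\psi$-classes pulled back from $\overline{\mathcal{M}}_{g,2}$ and the cotangent-line classes on the moduli of stable maps. In the leading boundary term, the genus-$g$ vertex is $\overline{\mathcal{M}}_{g,1}$ carrying the decoration $\psi_\bullet^{g-1}$ at its single (node) point, and the genus-$0$ vertex is $\overline{\mathcal{M}}_{0,3}$ carrying the two markings; summing the node over a dual basis, this stratum translates to $\sum_{\alpha}\langle\langle T^{g-1}(\phi_\alpha)\rangle\rangle_g\,\langle\langle\phi^\alpha W_1 W_2\rangle\rangle_0$. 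Since $T$ is tensorial in its vector-field argument and $W_1\bullet W_2 = \sum_\alpha \langle\langle \phi^\alpha W_1 W_2\rangle\rangle_0\,\phi_\alpha$ by definition of the quantum product, this equals $\langle\langle T^{g-1}(W_1\bullet W_2)\rangle\rangle_g$; together with the fact that the dual graph of $\xi$ has trivial automorphism group, this reproduces exactly the two displayed terms of the corollary with the asserted coefficient.

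It remains to see that everything else is $\overset{g}{\approx}0$, i.e.\ expressible through $F_0,\dots,F_{g-1}$ and their derivatives. Any lower-genus correction produced by the $\psi$-versus-$\Psi$ comparison is, by the way the operator $T$ is built, of this type. For the omitted boundary classes I would invoke the following combinatorial observation: every boundary stratum of $\overline{\mathcal{M}}_{g,2}$ with at least one edge, \emph{other than} $\xi$ with decoration $\psi_\bullet^{g-1}$, has all of its vertices of genus $\le g - 1$. Indeed, a genus-$g$ vertex forces the rest of the stable graph to be a loop-free genus-$0$ tree attached to it, and a short case check --- using the stability inequality on the genus-$0$ vertices and the availability of only the two markings $1,2$ --- leaves $\overline{\mathcal{M}}_{g,1}$ joined by a single edge to $\overline{\mathcal{M}}_{0,\{\bar\bullet,1,2\}}$ as the only configuration, whose unique degree-$g$ decoration is $\psi_\bullet^{g-1}$. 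Hence every omitted term, carrying no $\kappa$-classes and no $\psi$-classes on its genus-$0$ vertices, translates into a product of tensors $\langle\langle\cdots\rangle\rangle_h$ with $h \le g-1$, i.e.\ into a polynomial in $F_0,\dots,F_{g-1}$ and their derivatives.

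The main obstacle is careful bookkeeping rather than any conceptual point: one must apply the translation dictionary correctly --- in particular, verify that the $\psi$-versus-$\Psi$ comparison, once absorbed into $T$, does not smuggle in a further genus-$g$ correlator --- and check the combinatorial claim that $\xi$ decorated by $\psi_\bullet^{g-1}$ is the only boundary stratum of $\overline{\mathcal{M}}_{g,2}$ carrying a genus-$g$ vertex. Granting these, the corollary follows formally from Theorem~\ref{thm:trr-g-2} together with the definitions of $T$ and of the quantum product $\bullet$.
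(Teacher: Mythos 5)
Your proposal is correct and follows the same route the paper takes: the paper derives Corollary~\ref{cor:two-pt-correlation-fun} directly from the $n=2$ case of Theorem~\ref{thm:trr-g-2} via the standard translation of topological recursion relations into universal equations (splitting axioms together with Lemma~\ref{lem:anc-desc} and the tensoriality of $T$), exactly as you do. Your additional check that the rational-tails stratum decorated by $\psi_\bullet^{g-1}$ is the only boundary class of $\overline{\mathcal{M}}_{g,2}$ carrying a genus-$g$ vertex is a correct and welcome spelling-out of why all remaining terms are $\overset{g}{\approx}0$, a point the paper leaves implicit.
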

For $n\geq3$, the number of rational type strata grows rapidly, and it becomes very complicated to compute the coefficients in the degree-$g$ topological recursion relations explicitly.
We do still have the following weaker result on topological recursion relations, which is a corollary from the existence of general degree-$g$ topological recursion relations \eqref{eqn:want} on $\overline{\mathcal{M}}_{g,n}$.
\begin{corollary}
For any smooth projective variety and  any non-negative integers $\{k_i\}_{i=1}^{n}$ satisfying $\sum_{i=1}^{n}k_i\geq g+n-1$, there exists a topological recursion relation between its Gromov--Witten invariants of the form
\begin{align*}
\langle\langle T^{k_1}(W_1)\dots T^{k_n}(W_n)\rangle\rangle_g\overset{g}{\approx}0.    
\end{align*}
Moreover if $\sum_{i=1}^{n}k_i>3g-3+n$, then \begin{align*}
\langle\langle T^{k_1}(W_1)\dots T^{k_n}(W_n)\rangle\rangle_g=0.    
\end{align*}
\end{corollary}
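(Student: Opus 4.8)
The plan is to deduce both assertions from Corollary~\ref{cor:deg-d-TRR-uniform}, which expresses $\prod_{i}\psi_i^{k_i}$ in terms of boundary classes, together with the translation of tautological relations into universal equations (the rule proved in Appendix~\ref{subsec:anc-des}, under which $\psi_i^{k_i}$ at the $i$-th marking becomes $T^{k_i}(W_i)$ and each node becomes a pair $\phi_\alpha,\phi^\alpha$ summed over $\alpha$). We may assume $g\ge1$.

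I first treat the range $g+n-1\le\sum_{i}k_i\le 3g-3+n$. Since $\sum_ik_i\ge g$, Corollary~\ref{cor:deg-d-TRR-uniform} gives a topological recursion relation
\[
  \prod_{i=1}^{n}\psi_i^{k_i}=\sum_{[\Gamma,\gamma]}c_{[\Gamma,\gamma]}\,[\Gamma,\gamma],
\]
homogeneous of degree $\sum_ik_i$, in which every $\Gamma$ has at least one edge, no decoration involves $\kappa$ classes, and every genus-$h$ vertex carries a $\psi$-monomial of degree at most $h+\delta_h^0-1$. The key step is a combinatorial lemma: under the hypothesis $\sum_ik_i\ge g+n-1$, none of the graphs $\Gamma$ occurring has a vertex of genus $g$. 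Indeed, such a vertex $v_0$ would force $\Gamma$ to be a tree whose remaining $p=|V(\Gamma)|-1\ge1$ vertices all have genus $0$; since genus-$0$ vertices carry no $\psi$ classes and $v_0$ carries $\psi$-degree $e\le g-1$, we get $\sum_ik_i=\deg[\Gamma,\gamma]=|E(\Gamma)|+d(\gamma)=p+e$, hence $p\ge n$. But stability of the $p$ genus-$0$ vertices gives $3p\le\sum_{g(w)=0}n(w)$, while the right-hand side is at most $n$ (the legs) plus $2p-1$ (the half-edges of the tree incident to genus-$0$ vertices, since $v_0$ meets at least one edge), so $p\le n-1$, a contradiction.

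It follows that in the relation above every $[\Gamma,\gamma]$ has all vertices of genus $\le g-1$. Translating, the left-hand side becomes $\langle\langle T^{k_1}(W_1)\cdots T^{k_n}(W_n)\rangle\rangle_g$, while each $[\Gamma,\gamma]$ becomes a finite sum over node indices of products of tensors $\langle\langle\cdots\rangle\rangle_{g(v)}$ with $g(v)\le g-1$, i.e.\ a polynomial in $F_0,\dots,F_{g-1}$ and their derivatives; hence $\langle\langle T^{k_1}(W_1)\cdots T^{k_n}(W_n)\rangle\rangle_g\overset{g}{\approx}0$. For the second assertion, $\sum_ik_i>3g-3+n=\dim\overline{\mathcal{M}}_{g,n}$ forces $\prod_i\psi_i^{k_i}=0$ in $A^*(\overline{\mathcal{M}}_{g,n})$; applying the translation rule to this trivial tautological relation — formalized, if desired, by first realizing it as a genuine relation in the strata algebra of $\overline{\mathcal{M}}_{g,n+m}$ for $m$ large and descending via the string and dilaton equations, as in the passage from $\Omega_{g,M}^{\mathrm{pre}}$ to $\Omega_{g,M}$ in Section~\ref{subsec:algorithm-trr} — yields $\langle\langle T^{k_1}(W_1)\cdots T^{k_n}(W_n)\rangle\rangle_g=0$. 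Since every $(k_i)$ with $\sum_ik_i>3g-3+n$ also satisfies $\sum_ik_i\ge g+n-1$, this case completes the first assertion as well.

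The main obstacle is the combinatorial lemma: it is exactly the numerology $\sum_ik_i\ge g+n-1$ — precisely the threshold beyond which $\prod_i\psi_i^{k_i}$ vanishes on the rational-tails locus $\mathcal{M}_{g,n}^{rt}$ — that excludes rational-tails strata with a genus-$g$ core from the relation, and without this input the boundary terms could carry genus-$g$ vertices and spoil the $\overset{g}{\approx}0$ conclusion. A secondary point requiring care is verifying that the translation rule sends a graph all of whose vertices have genus $<g$ to an expression built solely from $F_0,\dots,F_{g-1}$ and their derivatives.
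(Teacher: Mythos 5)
Your proposal is correct and follows essentially the same route as the paper: the paper's two-line proof says that for $\sum_i k_i\geq g+n-1$ the rational-tails coefficients (equivalently, graphs with a genus-$g$ vertex) are excluded ``by degree reasons,'' which is exactly your counting lemma (stability forces at most $n-1$ genus-$0$ vertices in a rational-tails tree, hence degree at most $g+n-2$), and that for $\sum_i k_i>3g-3+n$ the monomial vanishes by dimension. Your explicit tree-counting argument and the remark on translating graphs with vertices of genus $\le g-1$ into polynomials in derivatives of $F_0,\dots,F_{g-1}$ simply spell out details the paper leaves implicit.
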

\begin{proof}
  The first equation follows from the fact that if $\sum_{i=1}^{n}k_i\geq g+n-1$, then by degree reason, all coefficients of rational tails in topogical recursion relations $\prod_{i=1}^{n}\psi_i^{k_i}= \text{linear combination of boundary classes}$ vanish.

  For the second equation, note that if $\sum_{i=1}^{n}k_i\geq 3g-3+n$, then $\prod_{i=1}^{n}\psi_i^{k_i}$ vanishes by degree reasons.
\end{proof}

% It is well known that equation~\eqref{eqn:tame} gives the tameness of the total ancestor potential. An interesting question is about what the relations are
% when $\sum_{i}k_i=3g-3+n$. 
% \begin{proposition}
% For any $\sum_{i}k_i=3g-3+n$, 
% \begin{align}
%   \psi_1^{k_1}\dots\psi_{n}^{k_n}=\sum_{\{\Gamma\in G_{g,n}: |E(\Gamma)|=3g-3+n, val(v)=3, v\in V(\Gamma)\}} c_{\Gamma} [\Gamma]
% \end{align}
% \end{proposition}
% \Xin{ Can we compute $c_{\Gamma}$?}

\section{Application to the Hodge class $\lambda_g$}\label{sec:appl-lambda-g}

In \cite{janda2017double}, a nice formula
for the  top Chern class $\lambda_g$ of the Hodge bundle  was found, which is supported on the divisor of curves with a nonseparating node.
More explicitly, we have
\begin{align*}
\lambda_g=\frac{(-1)^g}{2^g} \mathcal{D}_{g,n}^{g}(0,\dots,0) \in R^{g}(\overline{\mathcal{M}}_{g,n})
\end{align*}
where $\mathcal{D}_{g,n}(a_1,\dots,a_n)$ is Pixton's formula for double ramification cycles defined in Section~\ref{subsec: pixton-formula}. 
As an application of Theorem~\ref{thm:coeff-bouquet}, we may compute the coefficient of the bouquet class in this formula for $\lambda_g$:
\begin{proposition}
  \label{prop:lambda_g}
    On $\overline{\mathcal{M}}_{g,n}$, the top Chern class of Hodge bundle may be expressed as
\begin{align*}
\lambda_g=\left[\frac{(-1)^g}{2^g}\frac{t^2e^t}{(e^t-1)^2}\right]_{t^{2g}}
\left(\xi_{\Gammaloop_{0,g,[n]}}\right)_*(1)+\cdots    
\end{align*}
where $\Gammaloop_{0,g,[n]}$ is the stable graph with a single genus-0 vertex equipped with $g$ loops and all markings.
The omitted terms in the above formula satisfy the following condition: there are no
$\kappa$ classes and no genus-$h$ vertices with monomial of $\psi$ classes of degree $\geq h+\delta_{h}^0$ for
$0\leq h\leq g$. $[\quad]_{t^{2g}}$ means taking the coefficient of $t^{2g}$.
\end{proposition}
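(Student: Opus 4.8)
The plan is to start from the double ramification formula $\lambda_g = \frac{(-1)^g}{2^g}\mathcal{D}_{g,n}^g(0,\dots,0)$ of \cite{janda2017double} and rewrite the right-hand side in the canonical shape of Theorem~\ref{thm:coeff-bouquet}. As a strata-algebra element, $\mathcal{D}_{g,n}^g(0,\dots,0)$ is a sum over stable graphs carrying $\psi$-decorations on half-edges, and it is \emph{not} of the required form: it contains genus-$h$ vertices decorated by $\psi$-monomials of degree $h$, violating the bound of Theorem~\ref{thm:coeff-bouquet}. First I would remove these forbidden monomials by repeatedly applying the topological recursion relations of Corollary~\ref{cor:deg-d-TRR-uniform} on the relevant vertex moduli spaces, glued into the ambient graph, and track the coefficient of $(\xi_{\Gammaloop_{0,g,[n]}})_*(1)$ throughout.

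The key observation is that such a reduction never merges vertices, so only the single-vertex (bouquet type) graphs $\Gammaloop_{h,g-h,[n]}$ already present in $\mathcal{D}_{g,n}^g(0,\dots,0)$ can contribute to the pure bouquet $\Gammaloop_{0,g,[n]}$. Write $c_h$ for the decoration (a degree-$h$ polynomial in the half-edge $\psi$-classes) carried by the vertex of $\Gammaloop_{h,g-h,[n]}$ in Pixton's formula with $a_1=\cdots=a_n=0$. Applying Theorem~\ref{thm:coeff-bouquet} at genus $h$ to this decoration replaces each monomial $\prod_i\psi_i^{k_i}$ by $\frac{1}{8^h\prod_i(2k_i+1)!!}$ times the genus-$0$ bouquet with $h$ loops, plus non-bouquet terms; after re-gluing the $g-h$ original loops this produces exactly $8^{-h}\mathcal{F}(c_h)$ copies of $(\xi_{\Gammaloop_{0,g,[n]}})_*(1)$. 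That the remaining terms (and the reduction of the non-bouquet graphs) contribute nothing further to the pure bouquet is guaranteed by the invariance of $\sum_{h=0}^g 8^{-h}\mathcal{F}\bigl(c_{\Gammaloop_{h,g-h,[n]}}\bigr)$ under the relations of Corollary~\ref{cor:deg-d-TRR-uniform} and their gluing push-forwards (Theorem~\ref{thm:coeff-bouquet-general} together with the remark following Proposition~\ref{prop:1-loop-rel-M}), since in the canonical form the degree constraint $d(\gamma)=g-\#\{\text{loops}\}$ and the forbidden-monomial condition force the pure bouquet to be the only surviving bouquet type graph. Hence the sought coefficient equals $\frac{(-1)^g}{2^g}\sum_{h=0}^g 8^{-h}\mathcal{F}(c_h)$.

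It then remains to evaluate this sum. From the definition of $\widetilde{\mathcal{D}}^r$, $c_h$ is the degree-$h$ part (after taking the constant term in $r$) of $\frac{1}{|\Aut(\Gammaloop_{h,g-h,[n]})|\,r^{g-h}}\sum_w\prod_{e}\frac{1-e^{-\frac12 w(h_e)w(h_e')(\psi_{h_e}+\psi_{h_e'})}}{\psi_{h_e}+\psi_{h_e'}}$, the product over the $g-h$ loops. Since $\mathcal{F}$ is multiplicative over the loop factors, everything reduces to: (i) $\mathcal{F}$ of a single loop factor, via $\mathcal{F}(\psi_h^a\psi_{h'}^b)=\frac{1}{(2a+1)!!(2b+1)!!}$ and the identity $\sum_{a+b=m}\binom{m}{a}\frac{1}{(2a+1)!!(2b+1)!!}=\frac{m!\,2^{3m+1}}{(2m+2)!}$; (ii) the power sum $\sum_{w=0}^{r-1}\bigl(w(r-w)\bigr)^{m+1}$, a Faulhaber polynomial in $r$ whose linear coefficient is $(-1)^{m+1}B_{2m+2}$, which produces the constant term in $r$; and (iii) the outer sums over the number $g-h$ of loops and over $h$, with the weights $8^{-h}$. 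Resumming, and using the classical expansion $\log\frac{\sinh u}{u}=\sum_{k\ge1}\frac{2^{2k-1}B_{2k}}{k\,(2k)!}u^{2k}$ together with the identity $\frac{t^2e^t}{(e^t-1)^2}=\bigl(\frac{t/2}{\sinh(t/2)}\bigr)^2$, the sum collapses to $\bigl[\frac{(-1)^g}{2^g}\frac{t^2e^t}{(e^t-1)^2}\bigr]_{t^{2g}}$.

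I expect the main obstacle to be the combinatorial bookkeeping in this last step: correctly handling the automorphism factors when a genus-$h$ bouquet relation is glued into $\Gammaloop_{h,g-h,[n]}$ (merging the two groups of loops changes $|\Aut|$ by a factor $\binom{g}{h}$), keeping the degree grading consistent while $\mathcal{F}$ is applied loop by loop, justifying the interchange of the constant-term-in-$r$ operation with the sums over weightings and loops, and recognizing the resulting Bernoulli-number series as the stated generating function. A secondary point requiring care is checking that the reduction of $\lambda_g$ to canonical form uses only relations for which Theorem~\ref{thm:coeff-bouquet-general} (or its gluing push-forward version) applies, so that the invariance of $\sum_h 8^{-h}\mathcal{F}(c_{\Gammaloop_{h,g-h,[n]}})$ holds along the entire reduction.
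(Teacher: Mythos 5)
Your proposal is correct and follows essentially the same route as the paper: isolate the single-vertex graphs of $\mathcal{D}^{g}_{g,n}(0,\dots,0)$ (multi-vertex graphs can never reduce to the one-vertex bouquet), extract their bouquet contribution via Theorem~\ref{thm:coeff-bouquet} with the $8^{-h}\mathcal F$ weights, evaluate each loop factor with Faulhaber's formula (Lemma~\ref{lem:kp}), and resum the Bernoulli series --- your $2\log\frac{\sinh(t/2)}{t/2}$ is exactly the paper's $-t+2\ln\frac{e^t-1}{t}$. The automorphism worry you flag does not materialize: the paper simply carries the prefactor $\frac{1}{2^m m!}$ of $\Gammaloop_{g-m,m,[n]}$ through the substitution, since composing the genus-$(g-m)$ bouquet gluing with $\xi_{\Gammaloop_{g-m,m,[n]}}$ is functorial and yields $\xi_{\Gammaloop_{0,g,[n]}}$ with no extra $\binom{g}{h}$ factor.
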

We state a simple lemma used in the proof of the proposition.
\begin{lemma}\label{lem:kp}
  For any $p\geq1$, we have
  \begin{align*}
  \Coeff_{r^0}\left[\frac{1}{r}\sum_{k=0}^{r-1}  \left(k(r-k)\right)^p\right]
  =(-1)^{p}B_{2p}.
  \end{align*}
\end{lemma}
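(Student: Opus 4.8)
The plan is to evaluate the constant term in $r$ of the power-sum $\frac{1}{r}\sum_{k=0}^{r-1}(k(r-k))^p$ by relating it to Bernoulli numbers via the classical formula for power sums, or equivalently via a generating-function argument. First I would expand $(k(r-k))^p = \sum_{j=0}^{p}\binom{p}{j}(-1)^j r^{p-j} k^{p+j}$, so that
\begin{align*}
\frac{1}{r}\sum_{k=0}^{r-1}(k(r-k))^p
= \sum_{j=0}^{p}\binom{p}{j}(-1)^j r^{p-j-1}\sum_{k=0}^{r-1}k^{p+j}.
\end{align*}
Recall Faulhaber's formula: $\sum_{k=0}^{r-1}k^m = \frac{1}{m+1}\sum_{i=0}^{m}\binom{m+1}{i}B_i r^{m+1-i}$ (with the convention $B_1 = -\tfrac12$, which does not affect the final answer since the term contributes to a higher power of $r$). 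The constant term in $r$ of the $j$-th summand comes from picking the power $r^{p+j+1-i} = r^{j+1-p}$, i.e. $i = 2p$. Extracting this, the constant term equals
\begin{align*}
\sum_{j=0}^{p}\binom{p}{j}(-1)^j \frac{1}{p+j+1}\binom{p+j+1}{2p}B_{2p}
= B_{2p}\sum_{j=0}^{p}\binom{p}{j}(-1)^j \frac{1}{p+j+1}\binom{p+j+1}{2p}.
\end{align*}
So it remains to prove the combinatorial identity $\sum_{j=0}^{p}\binom{p}{j}(-1)^j \frac{1}{p+j+1}\binom{p+j+1}{2p} = (-1)^p$.

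For the combinatorial identity, I would use a Beta-integral representation: $\frac{1}{p+j+1}\binom{p+j+1}{2p} = \frac{(p+j)!}{(2p)!\,(j+1-p)!}$ — but note this vanishes unless $j \geq p-1$, so only $j = p-1$ and $j = p$ contribute (if one is careless about the convention). Actually the cleaner route is to observe $\frac{1}{p+j+1}\binom{p+j+1}{2p} = \binom{p+j}{2p} + \binom{p+j}{2p-1}\cdot\frac{?}{?}$... so instead I would directly write $\frac{1}{p+j+1}\binom{p+j+1}{2p} = \frac{1}{2p+1}\binom{p+j+1}{2p+1} + \text{(correction)}$, or better, use the integral $\frac{1}{p+j+1} = \int_0^1 t^{p+j}\,dt$, giving
\begin{align*}
\sum_{j=0}^p \binom{p}{j}(-1)^j \binom{p+j+1}{2p}\int_0^1 t^{p+j}\,dt.
\end{align*}
Then $\sum_j \binom{p}{j}(-1)^j t^j = (1-t)^p$, and after handling the $\binom{p+j+1}{2p}$ factor (e.g. by writing it as a fixed-degree polynomial in $j$ and using finite-difference identities, or by a generating function in a second variable), the integral collapses. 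Alternatively, and perhaps most transparently, I would use the known contour/residue evaluation of $\operatorname{Coeff}_{r^0}$ of a power sum: by the Euler–Maclaurin / Bernoulli-polynomial identity, $\sum_{k=0}^{r-1} f(k) = \int_0^r f + \ldots$, the constant term of $\frac{1}{r}\sum_{k=0}^{r-1}(k(r-k))^p$ as a polynomial in $r$ equals the coefficient arising from $B_{2p}$, and a symmetry argument ($k \leftrightarrow r-k$) combined with the substitution $k(r-k) \leftrightarrow -k^2 + O(r)$ pins down the sign $(-1)^p$.

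The main obstacle is the combinatorial identity $\sum_{j=0}^{p}\binom{p}{j}(-1)^j \frac{1}{p+j+1}\binom{p+j+1}{2p} = (-1)^p$; everything else is bookkeeping with Faulhaber's formula. I expect this identity to follow cleanly from the Beta-integral representation or from Zeilberger-type hypergeometric summation, but care is needed with the $B_1$ convention and with the range of $j$ for which the binomial coefficients are nonzero. A cleaner alternative, which I would present if the direct computation becomes unwieldy, is to note that Lemma~\ref{lema:lim-w1-w2} already computes essentially this type of constant term: setting $d_1 = d_2 = p/2$ there (when $p$ is even) or more precisely comparing the leading $x^{2p}$-coefficient of $\operatorname{Coeff}_{r^0}\bigl[\frac1r\sum_{w_1+w_2\equiv x}w_1^{2d_1}w_2^{2d_2}\bigr]$ at $x = 0$, one reduces directly to a known Bernoulli-number evaluation; this sidesteps the explicit sum entirely.
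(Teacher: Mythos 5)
Your overall route (binomially expand $(k(r-k))^p$ and feed each power sum into Faulhaber's formula) is exactly the paper's one-line proof, but your extraction of the constant term contains a genuine error. For the $j$-th summand, Faulhaber gives $\sum_{k=0}^{r-1}k^{p+j}=\frac{1}{p+j+1}\sum_{i=0}^{p+j}\binom{p+j+1}{i}B_i\,r^{p+j+1-i}$, where the index $i$ runs only up to $p+j$ (equivalently, this polynomial in $r$ has no constant term). Hence a $B_{2p}$-term ($i=2p$) exists only when $p+j\ge 2p$, i.e.\ only for $j=p$. Your displayed formula, however, sums $\binom{p}{j}(-1)^j\frac{1}{p+j+1}\binom{p+j+1}{2p}B_{2p}$ over all $j$, and since $\binom{2p}{2p}=1\neq 0$ this silently includes a spurious contribution from $j=p-1$. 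As a result, the ``combinatorial identity'' you set out to prove is false: $\sum_{j=0}^{p}\binom{p}{j}(-1)^j\frac{1}{p+j+1}\binom{p+j+1}{2p}=\tfrac{(-1)^p}{2}$, not $(-1)^p$ (for $p=1$ it is $\tfrac12-1=-\tfrac12$), so your expression would yield $\tfrac{(-1)^p}{2}B_{2p}$, contradicting the direct check $\Coeff_{r^0}\bigl[\tfrac1r\sum_{k=0}^{r-1}k(r-k)\bigr]=-\tfrac16=(-1)^1B_2$. The fix is immediate and removes all the machinery you invoke: only $j=p$ contributes, giving $(-1)^p\frac{1}{2p+1}\binom{2p+1}{2p}B_{2p}=(-1)^pB_{2p}$ at once; no Beta-integral, finite-difference, or Zeilberger-type identity is needed, and this corrected bookkeeping is precisely the paper's argument.

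Your proposed fallback via Lemma~\ref{lema:lim-w1-w2} also does not work as stated: that lemma only pins down the coefficient of $x^{2d_1+2d_2}$ in the constant-in-$r$ term, modulo $O(x^{<2d_1+2d_2})$, so it gives no information about the value at $x=0$, which is what the present lemma concerns; moreover it requires even exponents $2d_1,2d_2$, so it could at best address even $p$, whereas Lemma~\ref{lem:kp} is needed for all $p\ge 1$.
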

\begin{proof}
  This is a direct consequence of Faulhaber's formula:
  \begin{align*}
    \sum_{k=1}^{r}k^p
    =\frac{1}{p+1}\sum_{j=0}^{p}(-1)^{j}
    \binom{p+1}{j} B_{j} \cdot r^{p-j+1}.
  \end{align*}
\end{proof}
\begin{proof}[Proof of Proposition~\ref{prop:lambda_g}]
It suffices to compute the coefficients of $\mathcal{D}_{g,n}^g(0,\dots,0)$ corresponding to graphs with a single vertex.
Denote $\Gammaloop_{g-m, m, [n]} \in G_{g,n}$ to be the stable graph with one vertex and $m$ loops.
We start by computing the contribution of the graph $\Gammaloop_{g-1, 1, [n]}$ to $\mathcal{D}_{g,n}^g(0,\dots,0)$
\begin{align*}
&\Cont_{\Gammaloop_{g-1,1,[n]}}\mathcal{D}_{g,n}^g(0,\dots,0)
=\Coeff_{r^0}\left[\sum_{w=0}^{r-1}\frac{1}{2}\frac{1}{r}(\xi_{\Gammaloop_{g-1,1,[n]}})_*\frac{1-\exp(-w(r-w)(\psi_{h}+\psi_{h'}))}{\psi_{h}+\psi_{h'}}\right]_{\deg=g}
\\=&\Coeff_{r^0}\left(\frac{1}{2}\frac{1}{r}(-1)(\xi_{\Gammaloop_{g-1,1,[n]}})_* \frac 1{g!} \sum_{w=0}^{r-1}(w^2)^g(\psi_h+\psi_{h'})^{g-1}\right)
\\=&\frac{-1}{2}\frac{1}{g!}B_{2g}(\xi_{\Gammaloop_{g-1,1,[n]}})_*(\psi_h+\psi_{h'})^{g-1},
\end{align*}
where in the last step, we have used Lemma~\ref{lem:kp}.

By a similar computation, for any $1\leq m\leq g$, we have
\begin{align*}
  \Cont_{\Gammaloop_{g-m,m,[n]}}\mathcal{D}_{g,n}^g(0,\dots,0)
  =\frac{1}{m!\cdot 2^m }(-1)^m\sum_{k_1+\cdots +k_{m}=g}\prod_{i=1}^{m}\frac{B_{2k_i}}{k_i!} \cdot (\xi_{\Gammaloop_{g-m,m,[n]}})_*\prod_{i=1}^{m}(\psi_{h_i}+\psi_{h'_i})^{k_i-1}.
\end{align*}
Combining with Theorem~\ref{thm:coeff-bouquet}, we have 
\begin{align*}
&\Cont_{\Gammaloop_{g-m,m,[n]}}\mathcal{D}_{g,n}^g(0,\dots,0)   \\
=&\frac{1}{m!\cdot 2^m }(-1)^m\sum_{k_1+\dots +k_{m}=g}\left(\prod_{i=1}^{m}\frac{B_{2k_i}}{k_i!}\right) \sum_{b_1=0}^{k_1-1}\dots\sum_{b_m=0}^{k_m-1}(\xi_{\Gammaloop_{g-m,m,[n]}})_*\prod_{i=1}^{m}\binom{k_i-1}{b_i}(\psi_{h_i})^{b_i}(\psi_{h'_i})^{k_i-1-b_i} \\=&\frac{1}{m!\cdot 2^m }(-1)^m\sum_{k_1+\dots +k_{m}=g}\left(\prod_{i=1}^{m}\frac{B_{2k_i}}{k_i!}\right) \sum_{b_1=0}^{k_1-1}\dots\sum_{b_m=0}^{k_m-1}8^{-(g-m)}\prod_{i=1}^{m}\frac{2^{k_i-1}(k_i-1)!}{(2k_i)!}
\binom{2k_i}{2b_i+1}
\left(\xi_{\Gammaloop_{0,g,[n]}}\right)_*(1)
+\cdots
\\=&\frac{1}{m! }(-1)^m\sum_{k_1+\dots +k_{m}=g}\left(\prod_{i=1}^{m}\frac{B_{2k_i}}{k_i\cdot(2k_i)!}\right) \left(\xi_{\Gammaloop_{0,g,[n]}}\right)_*(1)+\cdots.
\end{align*}
The omitted terms in the above formula satisfy the following condition: there are no
$\kappa$ classes and no genus-$h$ vertices with monomial of $\psi$ classes of degree $\geq h+\delta_{h}^0$ for
$0\leq h\leq g$.
Notice that
\begin{align*}
    \sum_{k_1+\dots+k_m=g}\left(\prod_{i=1}^{m}\frac{B_{2k_i}}{k_i\cdot(2k_i)!}\right) =\left[\left(\sum_{j=1}^{\infty}\frac{B_{2j}}{(2j)!}\frac{1}{j}t^{2j}\right)^{m}\right]_{t^{2g}}
    =\left[\left(-t+2\ln\frac{e^t-1}{t}\right)^m\right]_{t^{2g}}.
\end{align*}
In fact,  by definition of Bernoulli number, 
\begin{align*}
   \int t^{-1}\left(\frac{t}{2}\frac{e^t+1}{e^t-1}-1 \right)dt=\sum_{j=1}^{\infty}\frac{B_{2j}}{(2j)!}\frac{1}{2j}t^{2j}=-\frac{t}{2}+\ln\frac{e^t-1}{t}.
\end{align*}
Thus
the coefficient of the bouquet class $\left(\xi_{\Gammaloop_{0,g,[n]}}\right)_*(1)$ in $\lambda_g$ equals 
\begin{align*}
&\frac{(-1)^g}{2^g}\sum_{m=1}^{g}\frac{(-1)^m}{m!}\sum_{k_1+\dots+k_m=g}\prod_{i=1}^{m}\frac{B_{2k_i}}{k_i\cdot (2k_i)!}=\left[\frac{(-1)^g}{2^g}\sum_{m=1}^{g}\frac{(-1)^m}{m!}\left(-t+2\ln\frac{e^t-1}{t}\right)^{m}\right]_{t^{2g}}
\\=&\left[\frac{(-1)^g}{2^g}\sum_{m=1}^{\infty}\frac{(-1)^m}{m!}\left(-t+2\ln\frac{e^t-1}{t}\right)^{m}\right]_{t^{2g}}
\\=&\left[\frac{(-1)^g}{2^g}\frac{t^2e^t}{(e^t-1)^2}\right]_{t^{2g}}.
\end{align*}

\end{proof}

\section{Intersection numbers on the moduli space of stable curves}
\label{sec:inter-number-mgn}

In this section, we modify the algorithm from Section~\ref{subsec:algorithm-trr} to derive a recursive algorithm for the intersection numbers $\int_{\overline{\mathcal{M}}_{g,n}}\prod_{i=1}^{n}{\psi_i}^{k_i}$ with any $2g-2+n>0$.
We will show that all intersection numbers are uniquely determined by the recursion with initial conditions $\int_{\overline{\mathcal{M}}_{0,3}}1=1$ and $\int_{\overline{\mathcal{M}}_{1,1}}\psi_1=\frac{1}{24}$.

Fix a genus $g$
and a number of marked points $n$ satisfying $2g-3+n>0$. Let $M$ be a monomial of degree
$D\leq 2(3g-3+n)$ in the variables $a_2,\dots,a_n$, and let
$N:=n+2(3g-3+n)-D$. Define
\[\widehat{\Omega}^{pre}_{g,M}=[\mathcal{D}_{g,N}^{3g-3+n}(a_2,\dots,a_N)]_{M\cdot a_{n+1}\dots a_{N}}\in \mathcal{S}_{g,N}^{3g-3+n}\]
to be the coefficient of the monomial $M\cdot a_{n+1}\dots a_N$ in Pixton’s class $\mathcal{D}_{g,N}^{3g-3+n}(a_2,\dots,a_N)$. 
Let $\pi\colon \overline{\mathcal{M}}_{g,N}\rightarrow \overline{\mathcal{M}}_{g,n}$ be the forgetful map. Define 
\[\widehat{\Omega}_{g,M}=\pi_*\left(\widehat{\Omega}_{g,M}^{pre}\cdot \psi_{n+1}\dots\psi_{N}\right)\in \mathcal{S}^{3g-3+n}_{g,n}.\]
In the range $2g-3+n>0$, integrating the class $\widehat{\Omega}_{g,M}$ gives
\begin{align}\label{eqn:int-trr-3g-3+n}
\int_{\overline{\mathcal{M}}_{g,n}}\widehat{\Omega}_{g,M} =0. 
\end{align}
The following Theorem~\ref{thm:algo-inter} shows that \eqref{eqn:int-trr-3g-3+n}
yields a new combinatorial recursive formula for integrals
of $\psi$ classes on the moduli space of curves. 

To state the theorem, first, fix integers $(a_2, \dotsc, a_n)$, an integer $r > 0$, a
stable graph $\Gamma=(V,H,g,p,\iota)$ of genus $g$ with $n$ legs, and an  assignment $x_v \in \mathbb Z$ for
every $v \in V$.
Given this, we set
\begin{equation*}
  a_1 = -(a_2 + \dotsb + a_n + \sum_{v \in V} x_v).
\end{equation*}
With this, we define a \emph{weighting modulo $r$} on $(\Gamma, x)$ to be a
map
\begin{equation*}
  w\colon H \to \{0, \dotsc, r - 1\}
\end{equation*}
satisfying three properties:
\begin{enumerate}
\item For any $i \in \{1, \dotsc, n\}$ corresponding to a leg $\ell_i$
  of $\Gamma$, we have $w(\ell_i) \equiv a_i \pmod{r}$.
\item For any edge $e\in E$ corresponding to two half-edges $h, h'\in H$, we
  have $w(h) + w(h') \equiv 0 \pmod{r}$.
\item For any vertex $v \in V$, we have
  $\sum_{h\in p^{-1}(v)} w(h) \equiv -x_v \pmod{r}$.
\end{enumerate}
Define $\widehat{\mathcal{D}}_{(\Gamma,x)}^r$ to be the class
\begin{equation*}
  \frac 1{r^{h^1(\Gamma)}} \sum_{\substack{w \text{ weighting }\\ \text{mod } r \text{ on }(\Gamma,x)}} \left[\Gamma, \prod_{i=1}^n e^{\frac 12 a_i^2\psi_i} \prod_{(h,h')\in E} \frac{1 - e^{-\frac 12 w(h)w(h')(\psi_h + \psi_{h'})}}{\psi_h + \psi_{h'}}\right] \in \mathcal{S}_{g,n}.
\end{equation*}
This class is a polynomial for $r \gg 0$, and we will use
$\widehat{\mathcal{D}}_{(\Gamma,x)}$ to denote the constant part of the resulting
polynomial.
The class $\widehat{\mathcal{D}}_{(\Gamma,x)}$ is a polynomial in the variables
$a_2, \dotsc, a_n$ and $x_v$ for $v \in V$, and so we will
denote it by
\begin{equation*}
\widehat{\mathcal{D}}_{\Gamma}(a_2, \dotsc, a_n, \{x_v\}).
\end{equation*}

% \begin{proposition}\label{prop:hat-omega_g,m-formula}
%   For any monomial $M$ of degree $D \leq 2(3g-3+n)$, we have
  
% \end{proposition}
% \begin{proof}

% \end{proof}
\begin{theorem}\label{thm:algo-inter}
  For any $\sum_{i=1}^{n}k_i=3g-3+n$, we have
\begin{align}\label{eqn:deg-3g-3+n-push-algo-int}
\int_{\overline{\mathcal{M}}_{g,n}}  \prod_{i=1}^{n}{\psi_i}^{k_i}   =\sum_{\substack{\Gamma\in G_{g,n}\\ |E(\Gamma)|\geq1}}\sum_{\{\alpha_{v}:v\in V(\Gamma)\}} c_{(\Gamma,\{\alpha_v\}_{v\in V(\Gamma)})}\cdot \prod_{v\in V(\Gamma)}\int_{\overline{\mathcal{M}}_{g(v),n(v)}}\alpha_{v}  
\end{align}
where  $\alpha_v$ ranges over all possible  monomials of tautological $\psi$ classes on $\overline{\mathcal{M}}_{g(v),n(v)}$.
Each coefficient $c_{(\Gamma,\{\alpha_v\}_{v\in V(\Gamma)})}$ is a rational number coming from the weight on stable graphs in Pixton's double ramifcation formula and factors from string equation and dilaton equation.        
\end{theorem}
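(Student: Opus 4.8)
The plan is to run the algorithm of Section~\ref{subsec:algorithm-trr} in top degree $d=3g-3+n$ in place of degree $g$, and then integrate the resulting tautological relation over $\overline{\mathcal{M}}_{g,n}$. First I would set aside the two stable cases with $2g-3+n\le 0$, namely $(g,n)=(0,3)$ and $(g,n)=(1,1)$, which are exactly the stated initial conditions; so assume from now on $2g-3+n>0$, equivalently $3g-3+n>g$. In this range $\mathcal{D}^{3g-3+n}_{g,N}$ is a tautological relation by Theorem~\ref{thm:DR}, the coefficient extraction $[\,\cdot\,]_{M\cdot a_{n+1}\cdots a_N}$ is legitimate by the polynomiality of Theorem~\ref{thm:poly}, and since pulling back, multiplying by $\psi$ classes, and pushing forward along forgetful maps all send tautological relations to tautological relations, the class $\widehat{\Omega}_{g,M}$ maps to $0$ in $R^{3g-3+n}(\overline{\mathcal{M}}_{g,n})$; this is the relation~\eqref{eqn:int-trr-3g-3+n}.

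Next I would produce, by the same reasoning as in Section~\ref{subsec:algorithm-trr} with $g$ replaced by $3g-3+n$ throughout, a tautological relation of the form
\[
\prod_{i=1}^n\psi_i^{k_i}=\sum_{\substack{\Gamma\in G_{g,n}\\|E(\Gamma)|\ge 1}}\sum_{\{\alpha_v\}}c_{(\Gamma,\{\alpha_v\})}\,\xi_{\Gamma*}\!\Big(\prod_{v\in V(\Gamma)}\alpha_v\Big),
\]
in which no $\kappa$ classes occur and each $\alpha_v$ is a monomial in $\psi$ classes on $\overline{\mathcal{M}}_{g(v),n(v)}$. Concretely, one computes the $\pi$-push-forward of $\widehat{\Omega}_{g,M}$ by a formula analogous to Propositions~\ref{prop:omega_g,m-formula} and~\ref{prop:omega_g,m-formula-pi-pushforward}, the only structural difference being that here \emph{all} of the auxiliary markings $n+1,\dots,N$ are forgotten, so the string and dilaton equations applied on each vertex account for every one of them; this isolates the pure $\psi$-monomial part supported on the trivial graph as a combination of the monomials $\prod_i\psi_i^{l_i}$ of degree $3g-3+n$. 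Taking the cancellation combination over monomials $M$ exactly as in~\eqref{eq:cancellation}, and then descending along the monomial order~\eqref{eq:psimon} as in~\eqref{eqn:want}, puts each $\prod_i\psi_i^{k_i}$ equal to such a boundary expression, where by construction the coefficients $c_{(\Gamma,\{\alpha_v\})}$ are explicit rational numbers assembled from the weights $1/(|\Aut(\Gamma)|\,r^{h^1(\Gamma)})$ and the factorial prefactors of Pixton's formula together with the combinatorial constants produced by the string and dilaton equations.

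Finally I would integrate this relation over $\overline{\mathcal{M}}_{g,n}$. On the left this is $\int_{\overline{\mathcal{M}}_{g,n}}\prod_i\psi_i^{k_i}$. On the right, since proper push-forward preserves the degree of zero-cycles, $\int_{\overline{\mathcal{M}}_{g,n}}\xi_{\Gamma*}(\prod_v\alpha_v)=\int_{\overline{\mathcal{M}}_\Gamma}\prod_v\alpha_v=\prod_v\int_{\overline{\mathcal{M}}_{g(v),n(v)}}\alpha_v$ by the Künneth decomposition, which vanishes unless $\deg\alpha_v=3g(v)-3+n(v)$ for every vertex. Because $\Gamma$ has at least one edge, $\dim\overline{\mathcal{M}}_{g(v),n(v)}=3g(v)-3+n(v)<3g-3+n$ for every $v$, so the right side is a $\mathbb{Q}$-linear combination of products of intersection numbers on moduli spaces of strictly smaller dimension. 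An induction on $3g-3+n$, with the two cases above as base and with intersection numbers of the wrong degree being zero, then both establishes the displayed recursion and shows that, together with $\int_{\overline{\mathcal{M}}_{0,3}}1=1$ and $\int_{\overline{\mathcal{M}}_{1,1}}\psi_1=\tfrac{1}{24}$, it determines all $\int_{\overline{\mathcal{M}}_{g,n}}\prod_i\psi_i^{k_i}$.

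The step I expect to be the main obstacle is the cancellation and normalisation in the second paragraph: one must verify that in the combination over monomials $M$ the coefficient of the leading monomial $\prod_i\psi_i^{k_i}$ is the expected nonzero rational number — the top-degree analog of the constants in~\eqref{eq:cancellation} and of \cite[Proposition~13]{clader2023topological} — so that it can actually be solved for, and that no $\kappa$ classes reappear. Carrying this out requires re-deriving the relevant degree estimates (for instance the analog of Remark~\ref{rem:contr-asimple-loop}) with $g+1$ replaced by $3g-3+n$, and then tracking which stable graphs and which vertex $\psi$-monomials survive in order to assemble the explicit coefficient; the remaining bookkeeping with the string and dilaton equations is routine given Propositions~\ref{prop:omega_g,m-formula} and~\ref{prop:omega_g,m-formula-pi-pushforward}.
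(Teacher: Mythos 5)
Your overall strategy coincides with the paper's: establish the top-degree relation \eqref{eqn:int-trr-3g-3+n} from Pixton's degree-$(3g-3+n)$ class, compute its push-forward by the analog of Proposition~\ref{prop:omega_g,m-formula}, integrate, observe that every graph with an edge contributes a product of intersection numbers on strictly lower-dimensional vertex moduli spaces, and close the argument by induction with $\overline{\mathcal{M}}_{0,3}$ and $\overline{\mathcal{M}}_{1,1}$ as initial cases. The one place you diverge is the step you yourself flag as the main obstacle: you propose to import the cancellation combination \eqref{eq:cancellation} (and a top-degree analog of \cite[Proposition~13]{clader2023topological}) to manufacture a class-level relation with leading term exactly $\prod_i\psi_i^{k_i}$ before integrating. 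The paper does not do this and does not need to: it fixes the single monomial $M=\prod_{i\ge 2}a_i^{2k_i}$, integrates $\widehat\Omega_{g,M}$ directly, and computes the trivial-graph contribution in closed form (equation \eqref{eqn:contr-trivial-inter-number}); the multinomial coefficient appearing there forces $l_i\le k_i$ for $i\ge 2$, so the only same-dimension terms are the leading monomial, with a manifestly nonzero explicit coefficient, together with monomials strictly lower in the order \eqref{eq:psimon}, and a descending induction on that order (exactly the second half of your plan) finishes the proof. So the normalization question you worry about dissolves into an elementary multinomial computation rather than a new cancellation identity; relatedly, since every forgotten marking carries exactly one $\psi$, only the dilaton equation enters the push-forward \eqref{eqn:hat-omega_g,m-formula}, not a re-derived analog of \eqref{eq:cancellation}. (If you do want the class-level statement you assert, it is already available from Corollary~\ref{cor:deg-d-TRR-uniform} applied with $d=3g-3+n$, but then the advertised description of the coefficients in terms of Pixton weights and dilaton factors is less direct than in the paper's route.) With that adjustment your argument is complete and agrees with the paper's in substance.
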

\begin{proof}
The proof is similar to the proof of Proposition \ref{prop:omega_g,m-formula}.   First, the multiplication by $\psi_{n+1} \cdot \dots \cdot \psi_N$
  in the definition of $\widehat{\Omega}_{g, M}$ ensures that we only need to
  consider $N$-pointed stable graphs obtained by distributing
  $N-n = 2(3g-3+n)-D$ legs to an genus-$g$, $n$-pointed stable graph
  $\Gamma$.
  Let $m(v)$ be the number of extra legs at a vertex $v$ of $\Gamma$.
  Then, there are
  \begin{equation*}
    \frac{(2(3g-3+n)-D)!}{\prod_v m(v)!}
  \end{equation*}
  many such distributions.
  Because of the identity
  \begin{equation*}
    [p(x_1 + \dotsb + x_m)]_{x_1 \cdot \dotsb \cdot x_m}
    = m! [p(x)]_{x^m}
  \end{equation*}
  for any polynomial $p(x)$, we may set $x_v$ for $v$ be the
  sum of the variables $a_i$ for $i$ corresponding to extra markings
  at $v$.
  This leads to the additional factors
  \begin{equation*}
\prod_{v} m(v)!.  \end{equation*}
  Finally, applying the dilaton equation repeatedly to compute the
  push-forward under the map forgetting the last $2(3g-3+n)-D$ markings
  leads to the factor
  \begin{equation*}
    \frac{(2g(v) - 3 + n(v) + m(v))!}{(2g(v) - 3 + n(v))!}
  \end{equation*}
  for every vertex $v$.
  Putting all of these together yields the formula
  \begin{align}\label{eqn:hat-omega_g,m-formula}
   \widehat{\Omega}_{g, M}
    = \sum_{\Gamma\in G_{g,n}} \sum_{\substack{m\colon V \to \mathbb Z_{\ge 0} \\ \sum_v m(v) = 2(3g-3+n)-D}}
    \frac{(2(3g-3+n)-D)!}{|\Aut(\Gamma)|}
    \prod_{v \in V} \frac{(2g(v) - 3 + n(v) + m(v))!}{(2g(v) - 3 + n(v))!} \cdot \widehat{C}_{\Gamma, M, m},
  \end{align}
  where 
  $\widehat{C}_{\Gamma, M, m}$ is the coefficient of
  $M \cdot \prod_v x_v^{m(v)}$ of the degree $3g-3+n$ part of
  the class $\widehat{\mathcal{D}}_{\Gamma}(a_2, \dotsc, a_n, \{x_v\})$.

Combining equations~\eqref{eqn:int-trr-3g-3+n} and \eqref{eqn:hat-omega_g,m-formula}, we obtain
\begin{align}\label{eqn:graph-sum-deg-3g-3+n-int}
\sum_{\Gamma\in G_{g,n}} \sum_{\substack{m\colon V \to \mathbb Z_{\ge 0} \\ \sum_v m(v) = 2(3g-3+n)-D}}
    \frac{(2(3g-3+n)-D)!}{|\Aut(\Gamma)|}
    \prod_{v \in V} \frac{(2g(v) - 3 + n(v) + m(v))!}{(2g(v) - 3 + n(v))!} \cdot \int_{\overline{\mathcal{M}}_{g,n}}\widehat{C}_{\Gamma, M, m}=0.
  \end{align}
  Finally, the above equation~\eqref{eqn:graph-sum-deg-3g-3+n-int} leads to equation~\eqref{eqn:deg-3g-3+n-push-algo-int}: letting $M=\prod_{i=2}^{n}a_i^{2k_i}$,
  the contribution of the trivial stable graph $\Gamma_{g,[n]}$ in equation~\eqref{eqn:graph-sum-deg-3g-3+n-int} becomes
  \begin{align}\label{eqn:contr-trivial-inter-number}
  &(6g-6+2n-D)!\frac{(2g-3+n+2(3g-3+n)-D)!}{(2g-3+n)!} 
\nonumber\\&\hspace{10pt}\cdot\int_{\overline{\mathcal{M}}_{g,n}}\left[\widehat{\mathcal{D}}_{\Gamma_{g,[n]}}^{3g-3+n}(a_2,\dots,a_n,\{x\})\right]_{\prod_{i=2}^{n}a_i^{2k_i}\cdot x^{6g-6+2n-D}}
\nonumber\\=&(6g-6+2n-D)!\frac{(8g-9+3n-D)!}{(2g-3+n)!} 
\int_{\overline{\mathcal{M}}_{g,n}}\left[e^{\frac{1}{2}(x+\sum_{i=2}^{n}a_i)^2\psi_1}\prod_{i=2}^{n}e^{\frac{1}{2}a_i^2\psi_i}\right]_{\prod_{i=2}^{n}a_i^{2k_i}\cdot x^{6g-6+2n-D}}
\nonumber\\=&(6g-6+2n-D)!\frac{(8g-9+3n-D)!}{(2g-3+n)!} 
\nonumber\\&\cdot
\sum_{\sum_{i=1}^{n}l_i=3g-3+n}\prod_{i=1}^{n}\frac{1}{2^{l_i}l_i!}
\int_{\overline{\mathcal{M}}_{g,n}}\prod_{i=1}^{n}\psi_i^{l_i}\left[(x+\sum_{i=2}^{n}a_i)^{2l_1}\right]_{\prod_{i=2}^{n}a_i^{2k_i-2l_i}\cdot x^{6g-6+2n-D}}
\nonumber\\=&(6g-6+2n-D)!\frac{(8g-9+3n-D)!}{(2g-3+n)!} 
\nonumber\\&\cdot
\sum_{\sum_{i=1}^{n}l_i=3g-3+n}\prod_{i=1}^{n}\frac{1}{2^{l_i}l_i!}
\int_{\overline{\mathcal{M}}_{g,n}}\prod_{i=1}^{n}\psi_i^{l_i}
\binom{2l_1}{2k_2-2l_2,\dots,2k_n-2l_n,6g-6+2n-2\sum_{i=2}^{n}k_i}.
  \end{align}
The contribution of all stable graphs with at least one edge in equation~\eqref{eqn:graph-sum-deg-3g-3+n-int} is equal to 
\begin{align}\label{eqn:contr-other-graph-inter-number}
&\sum_{\Gamma\in G_{g,n}: |E(\Gamma)|\geq1}\sum_{\substack{m:V\rightarrow\mathbb{Z}_{\geq0}\\
\sum_{v}m(v)=2(3g-3+n)-D}}\frac{(6g-6+2n-D)!}{|\Aut(\Gamma)|}
\nonumber\\&\hspace{20pt}\cdot\prod_{v\in V(\Gamma)}\frac{(2g(v)-3+n(v)+m(v))!}{(2g(v)-3+n(v))!}\int_{\prod_{v}\overline{\mathcal{M}}_{g(v),n(v)}}\left[\widehat{\mathcal{D}}_{\Gamma}^{3g-3+n}(a_2,\dots,a_n,\{x_v\})\right]_{\prod_{i=2}^{n}a_i^{2k_i}\cdot\prod_{v\in V(\Gamma)}x_v^{m_v}}.    \end{align}
Plugging equations~\eqref{eqn:contr-trivial-inter-number} and \eqref{eqn:contr-other-graph-inter-number} into equation~\eqref{eqn:graph-sum-deg-3g-3+n-int}, we obtain
\begin{align*}
\int_{\overline{\mathcal{M}}_{g,n}}\prod_{i=1}^{n}\psi_i^{k_i} +\sum_{(l_2,\dots,l_n)<(k_2,\dots,k_n)}c'_{l_2,l_3,\dots,l_n}
\int_{\overline{\mathcal{M}}_{g,n}}\prod_{i=1}^{n}\psi_i^{l_i} =\text{boundary terms}
\end{align*}
where $(l_2,\dots,l_n)<(k_2,\dots,k_n)$ means the monomials of $\psi$ classes $\prod_{j=1}^{n}\psi_j^{l_j}$ strictly lower than  $\prod_{j=1}^{n}\psi_j^{k_j}$. Here all $c'_{k_2,k_3,\dots,k_n}$ are rational numbers determined from equation~\eqref{eqn:contr-trivial-inter-number}.
Finally, by induction on $(k_2,\dots,k_n)$, we obtain equation~\eqref{eqn:deg-3g-3+n-push-algo-int}. 
% so\dots
% \begin{align*}
% 0= &\int_{\overline{\mathcal{M}}_{g,N}}\left[\mathcal{D}_{g,N}^{3g-3+n}(a_2,\dots,a_N)\right]_{a_2^{2k_2}\dots a_n^{2k_n}\cdot a_{n+1}\dots a_N}  \cdot \psi_{n+1}\dots\psi_{N}
% \\=&\int_{\overline{\mathcal{M}}_{g,N}}\left[\prod_{i=1}^{N}e^{\frac{1}{2}a_i^2\psi_i}\right]_{a_2^{2k_2}\dots a_n^{2k_n}\cdot a_{n+1}\dots a_N}\cdot \psi_{n+1}\dots\psi_{N}+\text{boundary terms}
% \\=&\frac{(2g-3+N)!}{(2g-3+n)!}\int_{\overline{\mathcal{M}}_{g,n}}\left[\prod_{i=1}^{n}e^{\frac{1}{2}a_i^2\psi_i}\right]_{a_2^{2k_2}\dots a_n^{2k_n}\cdot a_{n+1}\dots a_N}
% \\&+\sum_{\Gamma\in G_{g,N}: |E(\Gamma)|\geq1}\int_{\overline{\mathcal{M}}_{g,N}}\Cont_{\Gamma}\left[\mathcal{D}_{g,N}^{3g-3+n}(a_2,\dots,a_N)\right]_{a_2^{2k_2}\dots a_n^{2k_n}\cdot a_{n+1}\dots a_N}\psi_{n+1}\dots\psi_{N} \\=&\frac{(2g-3+N)!}{(2g-3+n)!}\int_{\overline{\mathcal{M}}_{g,n}}\left[\prod_{i=1}^{n}e^{\frac{1}{2}a_i^2\psi_i}\right]_{a_2^{2k_2}\dots a_n^{2k_n}\cdot a_{n+1}\dots a_N}
% \\&+\sum_{\Gamma\in G_{g,n}: |E(\Gamma)|\geq1}\sum_{\substack{m\colon V\rightarrow\mathbb{Z}_{\geq0}\\
% \sum_{v}m(v)=2(3g-3+n)-D}}\frac{(6g-6+2n-D)!}{|\Aut(\Gamma)|}
% \\&\hspace{30pt}\cdot\prod_{v\in V(\Gamma)}\int_{\overline{\mathcal{M}}_{g(v),n(v)}}\left[\mathcal{D}_{\Gamma}^{3g-3+n}(a_2,\dots,a_n,\{x_v\})\right]_{a_2^{2k_2}\dots a_n^{2k_n}\cdot\prod_{v\in V(\Gamma)x_v^{m_v}}}\prod_{v}\frac{(2g(v)-3+n(v)+m(v))!}{(2g(v)-3+n(v))!}
% \\=&\frac{(2g-3+N)!}{(2g-3+n)!}
% \int_{\overline{M}_{g,n}}\psi_{1}^{3g-3+n-k_2-\dots-k_n}\psi_2^{k_2}\dots\psi_{n}^{k_n}+\dots
% .
% \end{align*}    
\end{proof}

Now we give an  example to show how to use Theorem~\ref{thm:algo-inter} to do explicit calculations.
\begin{example}\label{ex:m12}
  We verify that
\begin{align}
\label{eqn:ex-M-1-2} \int_{\overline{\mathcal{M}}_{1,2}}\psi_1^2=\frac{1}{24}   
\end{align}
In this case, $(g,n,N,D)=(1,2,6,0)$, and
there are 5 stable graphs in $G_{1,2}$ (see Figure~\ref{fig:graphs}). 
\begin{figure}
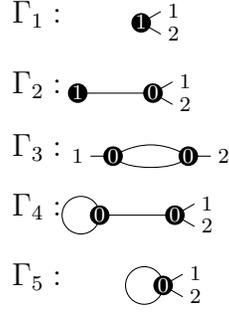

  \centering
  \begin{align*}
    \Gamma_1: & \qquad
                \tikz[baseline]{
                \draw (0,0) -- +(30:3mm) node[label={[label distance=-2mm]0:$\substack{1}$}] {};
                \draw (0,0) -- +(-30:3mm) node[label={[label distance=-2mm]0:$\substack{2}$}] {};
                \fill (0,0) circle(1.3mm) node {\color{white}$\substack 1$};
                } \\
    \Gamma_2: &
                \tikz[baseline]{
                \draw (0,0) -- (1,0);
                \draw (1,0) -- +(30:3mm) node[label={[label distance=-2mm]0:$\substack{1}$}] {};
                \draw (1,0) -- +(-30:3mm) node[label={[label distance=-2mm]0:$\substack{2}$}] {};
                \fill (0,0) circle(1.3mm) node {\color{white}$\substack 1$};
                \fill (1,0) circle(1.3mm) node {\color{white}$\substack 0$};
                } \\
    \Gamma_3: &
                \tikz[baseline]{
                \draw plot [smooth,tension=1] coordinates {(0,0) (0.5,0.15) (1,0)}; \draw plot [smooth,tension=1.5] coordinates {(0,0) (0.5,-0.15) (1,0)};
                \draw (0,0) -- (-0.3,0) node[label={[label distance=-2mm]180:$\substack{1}$}] {};
                \draw (1,0) -- (1.3,0) node[label={[label distance=-2mm]0:$\substack{2}$}] {};
                \fill (0,0) circle(1.3mm) node {\color{white}$\substack 0$}; \fill (1,0) circle(1.3mm) node {\color{white}$\substack 0$};
                } \\
    \Gamma_4: &
                \tikz[baseline]{
                \draw (0,0) -- (1,0);
                \draw (1,0) -- +(30:3mm) node[label={[label distance=-2mm]0:$\substack{1}$}] {};
                \draw (1,0) -- +(-30:3mm) node[label={[label distance=-2mm]0:$\substack{2}$}] {};
                \draw (-0.25,0) circle(0.25);
                \fill (0,0) circle(1.3mm) node {\color{white}$\substack 0$} {};
                \fill (1,0) circle(1.3mm) node {\color{white}$\substack 0$} {};
                } \\
    \Gamma_5: & \qquad
                \tikz[baseline]{
                \draw (0,0) -- +(30:3mm) node[label={[label distance=-2mm]0:$\substack{1}$}] {};
                \draw (0,0) -- +(-30:3mm) node[label={[label distance=-2mm]0:$\substack{2}$}] {};
                \draw (-0.25,0) circle(0.25);
                \fill (0,0) circle(1.3mm) node {\color{white}$\substack{0}$};
                }
  \end{align*}
  \caption{Dual graphs for $\overline{\mathcal M}_{1,2}$}
  \label{fig:graphs}
\end{figure}
Below, we list the  contribution of each stable graph to the equation~\eqref{eqn:int-trr-3g-3+n} or \eqref{eqn:graph-sum-deg-3g-3+n-int}.
Using \eqref{eqn:contr-trivial-inter-number} and \eqref{eqn:contr-other-graph-inter-number} and a direct computation, we see
\begin{align*}
&\Cont_{\Gamma_1}=4!\frac{5!}{1!}\frac{1}{2^2 2!}\binom{4}{0,4}\int_{\overline{\mathcal{M}}_{1,2}}\psi_{1}^2=360\int_{\overline{\mathcal{M}}_{1,2}}\psi_1^2,\,
\\&\Cont_{\Gamma_2}=\frac{4!}{1}\sum_{m_1+m_2=4}\frac{(m_1)!}{(0)!}\frac{(m_2)!}{(0)!}\int_{\overline{\mathcal{M}}_{1,1}\times \overline{\mathcal{M}}_{0,3}}\left[e^{\frac{1}{2}(a_2+x_1+x_2)^2\psi_1}e^{\frac{1}{2}a_2^2\psi_2} \frac{1 - e^{-\frac 12 x_1^2(\psi_h + \psi_{h'})}}{\psi_h + \psi_{h'}}\right]_{x_1^{m_1}x_2^{m_2}}= -3,\,\,
\\&\Cont_{\Gamma_3}=\frac{4!}{2}\sum_{m_1+m_2=4}\frac{m_1!}{0!}\frac{m_2!}{0!}
\Coeff_{r^0}\Bigg[\frac{1}{r}\sum_{\substack{w_1,w_2=0\\ w_1+w_2=a_2+x_2 \mod r}}^{r-1}\int_{\overline{\mathcal{M}}_{0,3}\times\overline{\mathcal{M}}_{0,3}}\frac{1-e^{\frac{1}{2}w_1^2(\psi_{h(e_1)}+\psi_{h'(e_1)})}}{\psi_{h(e_1)}+\psi_{h'(e_1)}}
\\&\hspace{250pt}\cdot\frac{1-e^{\frac{1}{2}w_2^2(\psi_{h(e_2)}+\psi_{h'(e_2)})}}{\psi_{h(e_2)}+\psi_{h'(e_2)}}\Bigg]_{x_1^{m_1}x_2^{m_2}}
\\&\hspace{40pt}= -12,\,  \, 
\\&
\Cont_{\Gamma_4}=\frac{4!}{2}\sum_{m_1+m_2=4}\frac{m_1!}{0!}\frac{m_2!}{0!}
\\&\hspace{50pt}\cdot\Coeff_{r^0}\left[\frac{1}{r}\sum_{w=0}^{r-1}\int_{\overline{\mathcal{M}}_{0,3}\times\overline{\mathcal{M}}_{0,3}}\frac{1-e^{\frac{1}{2}w^2(\psi_{h(e_1)}+\psi_{h'(e_1)})}}{\psi_{h(e_1)}+\psi_{h'(e_1)}}\frac{1-e^{\frac{1}{2}x_1^2(\psi_{h(e_2)}+\psi_{h'(e_2)})}}{\psi_{h(e_2)}+\psi_{h'(e_2)}}\right]_{x_1^{m_1}x_2^{m_2}}
\\&\hspace{40pt}=0,
\\&
\Cont_{\Gamma_5}=\frac{4!}{2}\frac{5!}{1!}\Coeff_{r^0}\left[\frac{1}{r}\sum_{w=0}^{r-1}\int_{\overline{\mathcal{M}}_{0,4}}\left[e^{\frac{1}{2}(a_2+x)^2\psi_1}e^{\frac{1}{2}a_2^2\psi_2}\frac{1-e^{\frac{1}{2}w^2(\psi_{h}+\psi_{h'})}}{\psi_{h}+\psi_{h'}}\right]_{x^4}\right]=0.
\end{align*}
Solving $\sum_{i = 1}^5 \Cont_{\Gamma_i} = 0$ for $\int_{\overline{\mathcal{M}}_{1,2}} \psi_1^2$, we indeed obtain \eqref{eqn:ex-M-1-2}.
\end{example}
% Next we calculate
% \begin{example}
% \begin{align*}
%  \int_{\overline{\mathcal{M}}_{2,1}}{\psi_1}^4=\frac{1}{1152}   
% \end{align*}  
% Now $(g,n,N,D)=(2,1,9,0)$

% \end{example}

\begin{appendix}\label{append}
\section{Ancestor-descendant correspondence}
  
In this appendix, we discuss results about the correspondence between descendant and ancestor Gromov--Witten invariants.
In Subsection~\ref{app:review}, we review standard results, following \cite[Appendix 2]{coates2007quantum}.
In the remaining subsections we discuss the relations between Liu's $T$-operator, Givental's $S$-operator and the ancestor-descendant correspondence.
We expect that many of these results are known to experts, but we were not able to find a reference.

\subsection{Descendants and ancestors}
\label{app:review}

We review some basics of descendant and ancestor correlation functions of Gromov-Witten invariants. 
Consider the stablization morphism
\[St\colon \overline{\mathcal{M}}_{g,n}(X,\beta)\rightarrow \overline{\mathcal{M}}_{g,n}\]
which forgets the map $f$ in $(C; x_1,\dots,x_n; f ) \in  \overline{\mathcal{M}}_{g,n}(X,\beta)$ and stabilizes the curve
$(C; x_1,\dots,x_n)$
by contracting unstable components to points. 
Let
\[\pi_m\colon \overline{\mathcal{M}}_{g,n+m}(X,\beta)\rightarrow \overline{\mathcal{M}}_{g,n}(X,\beta)\]
be the morphism, which forgets the last $m$ markings, and stabilizes the curve.
For $1\leq i\leq n$, denote the  cotangent line bundle  along the $i$-th marked point over $\overline{\mathcal{M}}_{g,n}$ and $\overline{\mathcal{M}}_{g,n+m}(X,\beta)$ by $L_i$ and $\mathcal{L}_i$, respectively.
The bundles $\mathcal{L}_i$ and $\pi_m^* St^*L_i$  over
$\overline{\mathcal{M}}_{g,n+m}(X,\beta)$ are identified outside the locus $D$ consisting of maps such that the $i$-th marked
point is situated on a component of the curve which gets collapsed by $St \circ \pi_m$.

This locus $D$ is the image of the gluing map
\begin{align*}
   \iota\colon \sqcup_{\substack{m_1+m_2=m\\ \beta_1+\beta_2=\beta}} \overline{\mathcal{M}}_{0,\{i\}+\bullet+m_1}(X,\beta_1)\times_{X} \overline{\mathcal{M}}_{g,[n]\setminus\{i\}+\circ+m_2}(X,\beta_2)\rightarrow \overline{\mathcal{M}}_{g,n+m}(X,\beta)
\end{align*}
where the two markings $\bullet$ and $\circ$ are glued together under map $\iota$.
We denote the domain of this map by $Y_{n,m,\beta}^{(i)}$. The virtual normal bundle to $D$ at a generic
point is $\operatorname{Hom}(\pi_m^* St^*c_1(L_i), \mathcal{L}_i)$, and $D$ is ``virtually Poincar\'e-dual'' to $c_1(\mathcal{L}_i)-\pi_m^* St^*c_1(L_i)$ in the sense that
\begin{align}
\label{eqn:D-Y-vir}  \left(c_1(\mathcal{L}_i)-\pi_m^* St^*c_1(L_i)\right) \cap [\overline{\mathcal{M}}_{n+m}{(X,\beta)}]^{vir} =\iota_*[Y_{n,m,\beta}^{(i)}]^{vir}.
\end{align}
Define mixed type of ancestor and descendant correlation function as follows:
\begin{align*}
&\langle\langle\bar{\tau}_{j_1}\tau_{i_1}(\phi_1),\dots,\bar{\tau}_{j_n}\tau_{i_n}(\phi_n)\rangle\rangle_g(\mathbf{t}(\psi))
\nonumber\\&:=\sum_{m,\beta\geq0}\frac{Q^\beta}{m!}
\int_{[\overline{\mathcal{M}}_{g,n+m}(X,\beta)]^{vir}} \prod_{k=1}^{n}\left(\bar{\psi}_{k}^{j_k}\psi_k^{i_k}ev_k^*\phi_k\right)\prod_{k=n+1}^{n+m}ev_k^*\mathbf{t}(\psi_{k}),
\end{align*}
 where $\psi_k:=c_1(\mathcal{L}_k)$, $\bar{\psi}_k:=\pi_m^*St^*c_1(L_k)$ and $\mathbf{t}(z)=\sum_{n,\alpha}t_n^\alpha z^n\phi_\alpha\in H^*(X)[[\{t_n^\alpha\}]][[z]]$. 
\subsection{From ancestor to descendant correlation functions}\label{subsec:anc-des} 
\begin{lemma}\label{lem:anc-desc}
For any $2g-2+n>0$,  on the big phase space, we have
\begin{align}\label{eqn:anc-des}
\langle\langle\bar\tau_{j_1}\tau_{i_1}(\phi_{\alpha_1}),\dots,\bar\tau_{j_n}\tau_{i_n}(\phi_{\alpha_n})\rangle\rangle_g =   \langle\langle  T^{j_1}(\tau_{i_1}(\phi_{\alpha_1})),\dots,  T^{j_n}(\tau_{i_n}(\phi_{\alpha_n}))\rangle\rangle_g 
\end{align}
for any $\{\phi_{\alpha_i}: i=1,\dots,n\}\subset H^*(X;\mathbb{C})$ and $T$ is the operator from \eqref{eqn:T-operator}.
In particular,
\begin{align*}
\langle\langle\bar\tau_{j_1}(\phi_{\alpha_1}),\dots,\bar\tau_{j_n}(\phi_{\alpha_n})\rangle\rangle_g =   \langle\langle  T^{j_1}(\phi_{\alpha_1}),\dots,  T^{j_n}(\phi_{\alpha_n})\rangle\rangle_g. 
\end{align*}
\end{lemma}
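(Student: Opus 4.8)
The plan is to prove \eqref{eqn:anc-des} by induction on the total ancestor degree $J := j_1 + \dots + j_n$. When $J = 0$ every factor $\bar\psi_k^{j_k}$ equals $1$, so the left-hand side is the ordinary descendant correlator $\langle\langle \tau_{i_1}(\phi_{\alpha_1}), \dots, \tau_{i_n}(\phi_{\alpha_n})\rangle\rangle_g$, and since $T^0 = \mathrm{id}$ the right-hand side agrees. For the inductive step I relabel so that $j_1 \ge 1$, and the only geometric input I will use is \eqref{eqn:D-Y-vir}, which I rewrite as the identity of classes $(\psi_1 - \bar\psi_1)\cap [\overline{\mathcal{M}}_{g,n+m}(X,\beta)]^{\mathrm{vir}} = \iota_*[Y^{(1)}_{n,m,\beta}]^{\mathrm{vir}}$ on the space of stable maps.

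First I would write $\bar\psi_1^{j_1}\psi_1^{i_1} = \bar\psi_1^{j_1-1}\psi_1^{i_1+1} - \bar\psi_1^{j_1-1}\psi_1^{i_1}(\psi_1 - \bar\psi_1)$ and substitute it into the correlator. The first summand contributes the correlator with the first insertion changed to $\bar\tau_{j_1-1}\tau_{i_1+1}(\phi_{\alpha_1})$. For the second, I use $(\psi_1-\bar\psi_1)\cap[\cdot]^{\mathrm{vir}} = \iota_*[Y^{(1)}_{n,m,\beta}]^{\mathrm{vir}}$ together with the projection formula to express it as an integral over $[Y^{(1)}_{n,m,\beta}]^{\mathrm{vir}}$, where $Y^{(1)}$ is the fibre product of a genus-$0$ factor carrying the first marking and the node $\bullet$ with a genus-$g$ factor carrying the remaining markings and the node $\circ$; let $p_1, p_2$ be the projections to these factors. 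On $Y^{(1)}$ the rational bubble is contracted by $St\circ\pi_m$, and I would check the two pullback identities $\iota^*\psi_1 = p_1^*\psi_1^{(0)}$ (the descendant class at the first point of the genus-$0$ factor, a smooth point away from the node) and $\iota^*\bar\psi_1 = p_2^*\bar\psi_\circ$ (the \emph{ancestor} class at $\circ$ on the genus-$g$ factor, because forgetting the extra markings and the map destabilises the bubble and the stabilisation identifies the first marking with $\circ$); for $k \ge 2$ the classes $\bar\psi_k$ and $\psi_k$ just pull back from the genus-$g$ factor. Splitting the virtual class over the degree and inserting the diagonal $\sum_\beta \phi_\beta \otimes \phi^\beta$ at the node, which enters with no $\psi$-class since $[Y^{(1)}]^{\mathrm{vir}}$ is untwisted, the splitting axiom then gives
\[
  \langle\langle \bar\tau_{j_1}\tau_{i_1}(\phi_{\alpha_1}), \bar\tau_{j_2}\tau_{i_2}(\phi_{\alpha_2}), \dots\rangle\rangle_g
  = \langle\langle \bar\tau_{j_1-1}\tau_{i_1+1}(\phi_{\alpha_1}), \dots\rangle\rangle_g
  - \sum_\beta \langle\langle \tau_{i_1}(\phi_{\alpha_1})\phi^\beta\rangle\rangle_0\,\langle\langle \bar\tau_{j_1-1}(\phi_\beta), \dots\rangle\rangle_g ,
\]
the bookkeeping of the factors $1/m!$, the degree splitting and the Novikov variable being routine.

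Both correlators on the right have total ancestor degree $J-1$, so the inductive hypothesis rewrites every $\bar\tau$ as the corresponding power of $T$, leaving the slots $k\ge2$ as $T^{j_k}(\tau_{i_k}(\phi_{\alpha_k}))$. Using that $T$ is a linear operator on vector fields and that $\langle\langle\,\cdot\,\rangle\rangle_g$ is multilinear, the first slot collapses:
\begin{align*}
  & \langle\langle T^{j_1-1}(\tau_{i_1+1}(\phi_{\alpha_1})), \dots\rangle\rangle_g
  - \sum_\beta \langle\langle \tau_{i_1}(\phi_{\alpha_1})\phi^\beta\rangle\rangle_0\langle\langle T^{j_1-1}(\phi_\beta), \dots\rangle\rangle_g \\
  ={}& \langle\langle T^{j_1-1}\bigl(T(\tau_{i_1}(\phi_{\alpha_1}))\bigr), \dots\rangle\rangle_g
  = \langle\langle T^{j_1}(\tau_{i_1}(\phi_{\alpha_1})), \dots\rangle\rangle_g ,
\end{align*}
because $\tau_{i_1+1}(\phi_{\alpha_1}) - \sum_\beta \langle\langle\tau_{i_1}(\phi_{\alpha_1})\phi^\beta\rangle\rangle_0\phi_\beta$ is exactly $\tau_+(\tau_{i_1}(\phi_{\alpha_1})) - \sum_\beta\langle\langle\tau_{i_1}(\phi_{\alpha_1})\phi^\beta\rangle\rangle_0\phi_\beta = T(\tau_{i_1}(\phi_{\alpha_1}))$ and $T^{j_1-1}\circ T = T^{j_1}$. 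This is the right-hand side of \eqref{eqn:anc-des}, completing the induction; the ``in particular'' statement is the case $i_1 = \dots = i_n = 0$.

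The step I expect to be the main obstacle is the compatibility $\iota^*\bar\psi_1 = p_2^*\bar\psi_\circ$: one must argue that $St\circ\pi_m\circ\iota$ factors through the projection $p_2$ to the genus-$g$ factor followed by the stabilisation map forgetting the extra markings and the stable map, and that under this factorisation the cotangent line at the (now contracted) first marking is identified with the cotangent line at $\circ$. This is the only place where genuine geometry of the moduli of stable maps, rather than formal manipulation of correlation functions, enters, and it is what decides that the restriction of the ancestor class is again an ancestor class rather than a descendant class or zero.
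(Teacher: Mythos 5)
Your proposal is correct and follows essentially the same route as the paper: the comparison $(\psi_1-\bar\psi_1)\cap[\,\cdot\,]^{\mathrm{vir}}=\iota_*[Y^{(1)}]^{\mathrm{vir}}$ yields exactly the paper's one-step recursion \eqref{eqn:des-anc-recursion}, which is then absorbed into the operator $T$ and iterated (the paper iterates marking by marking, you organize it as induction on the total ancestor degree --- a cosmetic difference). Your explicit verification of the pullback identities $\iota^*\psi_1=p_1^*\psi_1^{(0)}$ and $\iota^*\bar\psi_1=p_2^*\bar\psi_\circ$ simply spells out the geometric content that the paper leaves implicit in quoting \eqref{eqn:D-Y-vir}.
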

\begin{proof}
  We will focus on the first marked point, and for simplicity, suppress the content of the other marked
  points from our notation.
  By equation \eqref{eqn:D-Y-vir}, the following identity holds for mixed type twisted correlators on the big phase space
\begin{align}
\label{eqn:des-anc-recursion}
\langle\langle\bar{\tau}_{j_1-1}\tau_{i_1+1}(\phi),\dots\rangle\rangle_g=
\langle\langle\bar{\tau}_{j_1}\tau_{i_1}(\phi),\dots\rangle\rangle_g
+\langle\langle \tau_{i_1}(\phi),\phi_\beta\rangle\rangle_0\langle\langle\bar{\tau}_{j_1-1}(\phi^\beta),\dots\rangle\rangle_g
\end{align}
for any $\phi\in H^*(X;\mathbb{C})$.

This implies
\begin{align*}
&\langle\langle\bar{\tau}_{j_1}\tau_{i_1}(\phi_{\alpha_1}),\dots\rangle\rangle_g 
=\langle\langle\bar\tau_{j_1-1}\left(\tau_{i_1+1}(\phi_{\alpha_1})-\langle\langle\tau_{i_1}(\phi_{\alpha_1}),\phi_\beta\rangle\rangle_0\phi^\beta\right),\dots\rangle\rangle_g
\\=&\langle\langle\bar\tau_{j_1-1}T(\tau_{i_1}(\phi_{\alpha_1})),\dots\rangle\rangle_g.
\end{align*}
Repeating this $j_1 - 1$ more times, we get
\begin{align*}
  \langle\langle\bar{\tau}_{j_1}\tau_{i_1}(\phi_{\alpha_1}),\dots\rangle\rangle_g 
  =\langle\langle T^{j_1}(\tau_{i_1}(\phi_{\alpha_1})),\dots\rangle\rangle_g
\end{align*}
Similar analysis for the other markings, yields \eqref{eqn:anc-des}.
\end{proof}

For any $g, n$ such that $2g - 2 + n > 0$ and $\mathbf{t}=\sum_{n,\alpha}t_n^\alpha z^n\phi_\alpha\in H^*(X)[[\{t_n^\alpha\}]][[z]]$, define a homomorphism $\Omega^{\mathbf{t}}_{g,n}\colon H^*(X)[[\psi]]^{\otimes n}\rightarrow H^*(\overline{\mathcal{M}}_{g,n})[[Q]][[\{t_n^\alpha\}]]$ via
\begin{align*}
\Omega^{\mathbf{t}}_{g,n}(\phi_{\alpha_1}\psi_1^{i_1},\dots,\phi_{\alpha_n}\psi_n^{i_n})  
=\sum_{m,\beta\geq0}\frac{Q^\beta}{m!} St_*{\pi_m}_*\left(\prod_{k=1}^{n}ev_k^*\phi_{\alpha_k}\psi_k^{i_k}\prod_{k=n+1}^{n+m}ev_i^*\mathbf{t}(\psi_{k})\right).
\end{align*}
From the ``cutting edges'' axiom of the virtual cycles of the moduli of stable maps to $X$, this system of homomorphisms satisfies the splitting axioms:
\begin{align}\label{eqn:split-ax-1}
\iota_{g_1,g_2}^*\Omega_{g,n}^{\mathbf{t}}(\phi_{\alpha_1}\psi_1^{i_1},\dots,\phi_{\alpha_n}\psi_{n}^{i_n})
=\sum_{\beta}\Omega_{g_1,n_1+1}^{\mathbf{t}}(\otimes_{k\in I_1}\phi_{\alpha_k}\psi_k^{i_k},\phi_\beta)\otimes \Omega_{g_2,n_2+1}^{\mathbf{t}}(\otimes_{k\in I_2}\phi_{\alpha_k}\psi_k^{i_k}\otimes \phi^\beta)
\end{align}
and
\begin{align}\label{eqn:split-ax-2}
\iota_{g-1}^*\Omega_{g,n}^{\mathbf{t}}(\phi_{\alpha_1}\psi_1^{i_1},\dots,\phi_{\alpha_n}\psi_n^{i_n})
=\sum_{\beta}\Omega^{\mathbf{t}}_{g-1,n+2}(\phi_{\alpha_1}\psi_1^{i_1},\dots,\phi_{\alpha_n}\psi_n^{i_n},\phi_\beta,\phi^\beta)
\end{align}
where $I_1\sqcup I_2=\{1,..,n\}$, $|I_1|=n_1, |I_2|=n_2$ and $\iota_{g_1,g_2}\colon \overline{\mathcal{M}}_{g_1,n_1+1}\times \overline{\mathcal{M}}_{g_2,n_2+1}\rightarrow \overline{\mathcal{M}}_{g,n}$, $\iota_{g-1}\colon \overline{\mathcal{M}}_{g-1,n+2}\rightarrow \overline{\mathcal{M}}_{g,n}$ are the two canonical gluing maps.
Note that, by definition, $\Omega$ is related to the double bracket of the descendant potential via integration on the moduli space of curve, 
i.e.
\begin{align*}
\int_{\overline{\mathcal{M}}_{g,n}}\Omega_{g,n}^{\mathbf{t}}(v_1\psi_1^{i_1},\dots,v_n\psi_{n}^{i_n})
=\langle\langle\tau_{i_1}(v_1),\dots,\tau_{i_n}(v_n)\rangle\rangle_g(\mathbf{t}).
\end{align*} 
For any topological recursion relations $TRR=0$ on $\overline{\mathcal{M}}_{g,n}$ and any vectors $\{v_i\}_{i=1}^{n}\subset H^*(X;\mathbb{Q})$, we obtain the vanishing
\begin{align*}
\int_{\overline{\mathcal{M}}_{g,n}}\Omega^{\mathbf{t}}(v_1,\dots,v_n)\cdot TRR=0.    
\end{align*}
Combined with Lemma~\ref{lem:anc-desc},  the splitting axioms~\eqref{eqn:split-ax-1} and \eqref{eqn:split-ax-2} explain the rule of translating topological recursion relations to universal equations for Gromov--Witten invariants.
\subsection{From descendant correlation functions to ancestor correlation functions}\label{subsec:des-anc}
Recall the  operator $S(t,z)\colon H^*(X)\rightarrow H^*(X)[[z^{-1}]]$, defined by Givental (cf.\ \cite{givental2001semisimple})
\begin{align*}
S(t,z)\phi=\phi+\sum_{\alpha}\langle\langle \frac{\phi}{z-\psi},\phi_\alpha\rangle\rangle_0\phi^\alpha.    
\end{align*}

\begin{lemma} For any $2g-2+n>0$, on the big phase space, we have
\begin{align}\label{eqn:des-anc-corre}
\langle\langle\tau_{j_1}(\phi_{\alpha_1}),\dots,\tau_{j_n}(\phi_{\alpha_n})\rangle\rangle_g=\langle\langle\widetilde{\tau_{j_1}(\phi_{\alpha_1})},\dots,\widetilde{\tau_{j_n}(\phi_{\alpha_n}})\rangle\rangle_g  
\end{align}
where $\widetilde{\tau_{j_i}(\phi_{\alpha_i})}=[z^{j_i}\cdot S(t,z)\phi_{\alpha_i}]_{+}|_{z\mapsto \bar{\tau}_{+}}$ and $\bar{\tau}_{+}$ is the operator defined by $\bar{\tau}_{+}(\bar{\tau}_{j}(\phi)):=\bar{\tau}_{j+1}(\phi)$. 
\end{lemma}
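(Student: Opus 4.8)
The plan is to reduce \eqref{eqn:des-anc-corre} to a one–marked–point statement and then iterate over the marked points. The only geometric input needed is the comparison \eqref{eqn:D-Y-vir} between the descendant class $\psi_i=c_1(\mathcal L_i)$ and the ancestor class $\bar\psi_i=\pi_m^*St^*c_1(L_i)$, in the form of the mixed–correlator recursion \eqref{eqn:des-anc-recursion}, which after reindexing reads
\begin{equation*}
  \langle\langle \bar\tau_{a}\tau_{b+1}(\phi),\dots\rangle\rangle_g
  = \langle\langle \bar\tau_{a+1}\tau_{b}(\phi),\dots\rangle\rangle_g
  + \sum_\beta\langle\langle \tau_{b}(\phi),\phi_\beta\rangle\rangle_0\,\langle\langle \bar\tau_{a}(\phi^\beta),\dots\rangle\rangle_g
\end{equation*}
for all $a,b\ge0$, the omitted insertions being arbitrary (cohomology-valued, descendant, or mixed). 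The genus-$0$ two-point bracket here is taken on the big phase space, hence carries the universal background, and this is exactly the bracket occurring in $S(t,z)$.

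First I would fix one marked point, say the first, with insertion $\tau_j(\phi)=\bar\tau_0\tau_j(\phi)$, and apply the displayed recursion repeatedly, at the $\ell$-th step with $(a,b)=(\ell-1,j-\ell)$ and applied to the leading term produced by the previous step, for $\ell=1,\dots,j$, each step trading one power of $\psi_1$ for one power of $\bar\psi_1$. Collecting the correction terms yields
\begin{equation*}
  \langle\langle \tau_j(\phi),\dots\rangle\rangle_g
  = \langle\langle \bar\tau_j(\phi),\dots\rangle\rangle_g
  + \sum_{k=0}^{j-1}\sum_\beta \langle\langle \tau_k(\phi),\phi_\beta\rangle\rangle_0\,\langle\langle \bar\tau_{j-k-1}(\phi^\beta),\dots\rangle\rangle_g .
\end{equation*}
On the other side, expanding $\tfrac1{z-\psi}=\sum_{k\ge0}z^{-k-1}\psi^k$ gives $S(t,z)\phi=\phi+\sum_\beta\sum_{k\ge0}z^{-k-1}\langle\langle\tau_k(\phi),\phi_\beta\rangle\rangle_0\phi^\beta$, so the positive-power truncation is $[z^jS(t,z)\phi]_+=z^j\phi+\sum_\beta\sum_{k=0}^{j-1}z^{j-k-1}\langle\langle\tau_k(\phi),\phi_\beta\rangle\rangle_0\phi^\beta$; substituting $z\mapsto\bar\tau_+$ (so $z^m$ sends a class $v$ to $\bar\tau_m(v)$) gives precisely $\widetilde{\tau_j(\phi)}=\bar\tau_j(\phi)+\sum_\beta\sum_{k=0}^{j-1}\bar\tau_{j-k-1}\big(\langle\langle\tau_k(\phi),\phi_\beta\rangle\rangle_0\phi^\beta\big)$. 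Pairing this last expression against the remaining insertions and comparing with the previous display proves $\langle\langle\tau_j(\phi),\dots\rangle\rangle_g=\langle\langle\widetilde{\tau_j(\phi)},\dots\rangle\rangle_g$ whatever the remaining insertions are.

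It then remains to apply this one-point identity successively to the marked points $1,\dots,n$. Converting the $i$-th point replaces $\tau_{j_i}(\phi_{\alpha_i})$ by $\widetilde{\tau_{j_i}(\phi_{\alpha_i})}$ in the leading term and leaves correction terms in which the $i$-th insertion is a cohomology class $\phi^\beta$ carrying only an ancestor class $\bar\psi_i^{\,\ell}$, multiplied by a scalar genus-$0$ factor; since the recursion \eqref{eqn:des-anc-recursion} involves only point $i$, this conversion is the same regardless of the insertions at the other points and does not disturb them, so the conversions at the $n$ points are mutually independent. Using multilinearity of $\langle\langle\cdots\rangle\rangle_g$ over functions on the big phase space, composing the $n$ conversions yields \eqref{eqn:des-anc-corre}. (This is essentially Givental's ancestor–descendant correspondence, cf.\ \cite[Appendix 2]{coates2007quantum}, repackaged through the operator $\bar\tau_+$.)

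The main obstacle is not depth but bookkeeping: one must be sure that the recursion used is the one for the \emph{mixed} correlators with the universal background turned on, so that the two-point genus-$0$ brackets produced are exactly the matrix entries of $S(t,z)$ rather than a truncated version, and one must check that the one-point conversion genuinely acts independently at each marked point. A minor technical remark is that the identity underpinning the match, namely $[z^jS\phi]_+=z\,[z^{j-1}S\phi]_++\big(\text{coefficient of }z^{-j}\text{ in }S\phi\big)$, should be read for $S(t,z)$ as an operator-valued Laurent series in $z^{-1}$, so that the substitution $z\mapsto\bar\tau_+$ is well defined; this is what makes the two sides of the computation literally equal term by term.
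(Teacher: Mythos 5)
Your proposal is correct and follows essentially the same route as the paper: it iterates the mixed ancestor--descendant recursion \eqref{eqn:des-anc-recursion} at a single marked point to trade $\psi$ for $\bar\psi$, identifies the collected genus-0 correction terms with the nonnegative part $[z^{j}S(t,z)\phi]_{+}$ substituted via $z\mapsto\bar\tau_{+}$, and then applies the same one-point conversion at each marked point, using multilinearity of the brackets over functions on the big phase space. The extra bookkeeping you supply (the explicit closed form of the corrections and the independence of the conversions at different points) is exactly what the paper's shorter proof leaves implicit.
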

\begin{proof}
   By equation~\eqref{eqn:des-anc-recursion}, for any $\phi\in H^*(X;\mathbb{Q})$,  we have
\begin{align*}
\langle\langle\tau_i(\phi),\dots\rangle
\rangle_g
=\langle\langle\bar\tau_1(\tau_{i-1}\phi),\dots\rangle\rangle_g
+\langle\langle\tau_{i-1}(\phi),\phi_\alpha\rangle\rangle_0\langle\langle\phi^\alpha,\dots\rangle\rangle_g
\end{align*}
which,
 applying equation~\eqref{eqn:des-anc-recursion} repeatedly, is
 \begin{align*}
 &\langle\langle(\langle\langle\tau_{i-1}(\phi),\phi^\alpha\rangle\rangle_0 \phi_\alpha),\dots\rangle\rangle_g 
 +\langle\langle\bar\tau_{1}(\langle\langle\tau_{i-2}(\phi),\phi^\alpha\rangle\rangle_0 \phi_\alpha),\dots\rangle\rangle_g
\\&+\langle\langle\bar\tau_2(\langle\langle\tau_{i-3}(\phi),\phi^\alpha\rangle\rangle_0 \phi_\alpha),\dots\rangle\rangle_g 
+\dots+\langle\langle\bar{\tau}_i(\phi),\dots\rangle\rangle_g.
\end{align*}
Note that, by definition of $S$ operator
\begin{align*}
& \langle\langle\tau_{i-1}(\phi),\phi^\alpha\rangle\rangle_0 \phi_\alpha+\langle\langle\tau_{i-2}(\phi),\phi^\alpha\rangle\rangle_0 z(\phi_\alpha)+\langle\langle\tau_{i-3}(\phi),\phi^\alpha\rangle\rangle_0 z^2(\phi_\alpha)+\dots +z^i(\phi)  
\\=&[z^{i}\cdot S(t,z)\phi]_{+}.
\end{align*}
Thus we obtain
\begin{align*}
\langle\langle\tau_{i}(\phi),\dots\rangle\rangle_g
=\langle\langle [z^{i}\cdot S(t,z)\phi]_{+}|_{z\mapsto \bar{\tau}_{+}},\dots\rangle\rangle_g.
\end{align*}
Applying the same argument at each marked point will give the equation~\eqref{eqn:des-anc-corre}.
\end{proof}

\subsection{Relations between Liu's $T$ operator and Givental's $S$ operator}

From subsections \ref{subsec:anc-des} and \ref{subsec:des-anc}, we can see $T$ operator appears naturally in the transformation from ancestor correlation functions to 
descendant correlation functions, while $S$ operator appears naturally in the transformation from  descendant correlation functions to 
ancestor correlation functions. Roughly speaking, the two operators are inverse to each other.
Precisely, the results in subsections \ref{subsec:anc-des} and \ref{subsec:des-anc} imply the following:
\begin{corollary}
For any $\phi\in H^*(X;\mathbb{Q})$,
\begin{align*}
\left[ S(\mathbf{t},z)(z^n\phi)\right]_{+}\bigg|_{z\rightarrow T}=\tau_n(\phi)    
\end{align*}
where $z\rightarrow T$ means replacing the variable $z$ by the operator $T$. For example,  under $z\rightarrow T$, $z^n\phi$ becomes $T^n(\phi)$.
Furthermore, we have
\begin{align*}
\left[S(\mathbf{t},z)\left(T^{n}(\phi)\big|_{\tau_+\rightarrow z}\right)\right]_{+}=z^n\phi    
\end{align*}
where $\tau_+\rightarrow z$ means replacing the operator $\tau_+$ by multiplying the variable $z$. For example, under $\tau_+\rightarrow z$, the insertion $\tau_{n}(\phi)$ becomes $z^n\phi$.
\end{corollary}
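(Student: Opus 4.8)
The corollary asserts that the descendant-to-ancestor dictionary \eqref{eqn:des-anc-corre} and the ancestor-to-descendant dictionary \eqref{eqn:anc-des} are mutually inverse, so in principle one could prove it by composing the Lemma of Subsection~\ref{subsec:des-anc} with Lemma~\ref{lem:anc-desc}. That route, however, would require an auxiliary faithfulness statement for the ancestor correlation functions in order to pass from an equality of insertions to the asserted equality of polynomials in $z$. I would therefore instead prove both identities directly, by induction on $n$, using only the definition \eqref{eqn:T-operator} of $T$ and the normalization $S(\mathbf t,z)=\mathrm{id}+O(z^{-1})$ of Givental's operator; the two inductive steps are nothing more than the two opposite directions of the recursion \eqref{eqn:des-anc-recursion} that already underlies both lemmas.

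For the first identity I would begin by expanding $S(\mathbf t,z)\phi=\phi+\sum_{k\ge0}\sum_\alpha\langle\langle\tau_k(\phi)\phi^\alpha\rangle\rangle_0\phi_\alpha\, z^{-k-1}$ and multiplying by $z^n$ to extract the recursion
\[
\big[S(\mathbf t,z)(z^n\phi)\big]_{+}=z\,\big[S(\mathbf t,z)(z^{n-1}\phi)\big]_{+}+\sum_\alpha\langle\langle\tau_{n-1}(\phi)\phi^\alpha\rangle\rangle_0\phi_\alpha,
\]
with base case $\big[S(\mathbf t,z)\phi\big]_{+}=\phi$. Substituting $z\to T$ and applying the induction hypothesis $\big[S(\mathbf t,z)(z^{n-1}\phi)\big]_{+}\big|_{z\to T}=\tau_{n-1}(\phi)$, the claim collapses to $T(\tau_{n-1}(\phi))+\sum_\alpha\langle\langle\tau_{n-1}(\phi)\phi^\alpha\rangle\rangle_0\phi_\alpha=\tau_n(\phi)$, which is exactly the definition \eqref{eqn:T-operator} of $T$.

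For the second identity write $D^n:=T^n(\phi)$ (a finite combination of descendant insertions) and $P^n:=D^n\big|_{\tau_+\to z}\in H^*(X)[z]$; the definition of $T$ gives $P^n=z\,P^{n-1}-\sum_\alpha\langle\langle D^{n-1}\phi^\alpha\rangle\rangle_0\phi_\alpha$, and $D^{n-1}$ is the descendant insertion corresponding to $P^{n-1}$ under the dictionary $z^b v\leftrightarrow\tau_b(v)$. The one computation I would carry out is the ``shift formula'': for $p(z)\in H^*(X)[z]$ with associated descendant insertion $\widehat p$,
\[
\big[S(\mathbf t,z)(z\,p(z))\big]_{+}=z\,\big[S(\mathbf t,z)\,p(z)\big]_{+}+\sum_\alpha\langle\langle\widehat p\,\phi^\alpha\rangle\rangle_0\phi_\alpha,
\]
which simply records the $z^{-1}$-coefficient of $S(\mathbf t,z)p(z)$ that is promoted to a constant term after multiplication by $z$. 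Applying it with $p=P^{n-1}$ and using $\big[S(\mathbf t,z)\phi_\alpha\big]_{+}=\phi_\alpha$, the new correction term cancels the term $-\sum_\alpha\langle\langle D^{n-1}\phi^\alpha\rangle\rangle_0\phi_\alpha$ coming from $P^n$, so that $\big[S(\mathbf t,z)P^n\big]_{+}=z\,\big[S(\mathbf t,z)P^{n-1}\big]_{+}$; since $\big[S(\mathbf t,z)P^0\big]_{+}=\big[S(\mathbf t,z)\phi\big]_{+}=\phi$, induction yields $\big[S(\mathbf t,z)\big(T^n(\phi)\big|_{\tau_+\to z}\big)\big]_{+}=z^n\phi$.

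The hard part, such as it is, is entirely bookkeeping: establishing the shift formula and, supporting it, the observation that $T^n(\phi)\big|_{\tau_+\to z}$ really is a polynomial in $z$ — which holds because each application of $T$ shifts the outermost descendant level by a single $\tau_+$ and contributes only genuine genus-$0$ correlators (with at least two entries) as coefficients, so no hidden $\tau_+$ survives inside those coefficients. If instead one wanted the ``compose the two lemmas'' proof, the corresponding obstacle would be to show that the ancestor correlation functions $\langle\langle\,\cdot\,,W_2,\dots,W_k\rangle\rangle_g$ (over all $g,k$ with $2g-2+k>0$ and all $W_j$) separate the ancestor monomials $\bar\tau_k(\phi_\alpha)$, which one would handle via the ancestor string and dilaton equations or by choosing suitable test insertions.
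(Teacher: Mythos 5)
Your proof is correct, and it is a genuinely more self-contained route than what the paper does: the paper offers no written proof, simply asserting that the corollary follows by combining Lemma~\ref{lem:anc-desc} with the lemma of Subsection~\ref{subsec:des-anc}, i.e.\ by reading the two ancestor--descendant dictionaries against each other inside correlation functions. Your version instead verifies both identities as formal vector-field identities by induction on $n$, using only the definition \eqref{eqn:T-operator} of $T$, the expansion $S(\mathbf t,z)=\mathrm{id}+O(z^{-1})$, and the function-linearity of $T$ and of the double brackets; the cancellation in your ``shift formula'' step is exactly the recursion \eqref{eqn:des-anc-recursion} stripped of its geometric clothing. What your approach buys is that it sidesteps the point you yourself flag: composing \eqref{eqn:anc-des} with \eqref{eqn:des-anc-corre} only identifies the two insertions inside all correlators, and upgrading that to an identity of $H^*(X)$-valued polynomials in $z$ (equivalently of vector fields on the big phase space) would require a separation argument for the correlation functions, which the paper does not supply. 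What the paper's implicit route buys is brevity and the conceptual statement that $S$ and $T$ implement the two directions of the ancestor--descendant correspondence, so that their inverseness is ``visible'' from the two lemmas; your computation makes that inverseness rigorous at the operator level. Both inductive steps in your write-up check out (the $z\to T$ substitution commutes with multiplication by $z$ because $T$ is linear over functions on the big phase space, and $T^n(\phi)\big|_{\tau_+\to z}$ is indeed a polynomial since each application of $T$ produces only finitely many descendant insertions with genus-$0$ correlator coefficients), so there is no gap.
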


\end{appendix}

% \bibliographystyle{plain}
% \bibliography{references}

\begin{thebibliography}{10}

\bibitem{bae2023pixton}
Younghan Bae, David Holmes, Rahul Pandharipande, Johannes Schmitt, and Rosa
  Schwarz.
\newblock Pixton's formula and {A}bel--{J}acobi theory on the {P}icard stack.
\newblock {\em Acta Mathematica}, 230(2):205--319, 2023.

\bibitem{belorousski2000descendent}
Pavel Belorousski and Rahul Pandharipande.
\newblock A descendent relation in genus 2.
\newblock {\em Annali della Scuola Normale Superiore di Pisa-Classe di
  Scienze}, 29(1):171--191, 2000.

\bibitem{buryak2011new}
Alexander Buryak and Sergey Shadrin.
\newblock A new proof of faber's intersection number conjecture.
\newblock {\em Advances in Mathematics}, 228(1):22--42, 2011.

\bibitem{clader2018pixton}
Emily Clader and Felix Janda.
\newblock Pixton’s double ramification cycle relations.
\newblock {\em Geometry \& Topology}, 22(2):1069--1108, 2018.

\bibitem{clader2023topological}
Emily Clader, Felix Janda, Xin Wang, and Dmitry Zakharov.
\newblock Topological recursion relations from {P}ixton's formula.
\newblock {\em Michigan Math. J.}, 73(2):227--241, 2023.

\bibitem{coates2007quantum}
Tom Coates and Alexander Givental.
\newblock Quantum {R}iemann-{R}och, {L}efschetz and {S}erre.
\newblock {\em Ann. of Math. (2)}, 165(1):15--53, 2007.

\bibitem{dubrovin1998bihamiltonian}
Boris Dubrovin and Youjin Zhang.
\newblock Bihamiltonian hierarchies in 2d topological field theory at one-loop
  approximation.
\newblock {\em Communications in mathematical physics}, 198(2):311--361, 1998.

\bibitem{faber1999algorithms}
Carel Faber.
\newblock Algorithms for computing intersection numbers on moduli spaces of
  curves, with an application.
\newblock {\em New trends in algebraic geometry}, 264:93, 1999.

\bibitem{faber1999conjectural}
Carel Faber.
\newblock A conjectural description of the tautological ring of the moduli
  space of curves.
\newblock In {\em Moduli of Curves and Abelian Varieties: The Dutch Intercity
  Seminar on Moduli}, pages 109--129. Springer, 1999.

\bibitem{faber2005relative}
Carel Faber and Rahul Pandharipande.
\newblock Relative maps and tautological classes.
\newblock {\em Journal of the European Mathematical Society}, 7(1):13--49,
  2005.

\bibitem{garcia2022curious}
Elba Garcia-Failde and Don Zagier.
\newblock A curious identity that implies faber's conjecture.
\newblock {\em Bulletin of the London Mathematical Society}, 54(5):1839--1845,
  2022.

\bibitem{getzler1997intersection}
Ezra Getzler.
\newblock Intersection theory on $\overline{M}_{1,4}$ and elliptic
  gromov-witten invariants.
\newblock {\em Journal of the American Mathematical Society}, 10(4):973--998,
  1997.

\bibitem{getzler1998topological}
Ezra Getzler.
\newblock Topological recursion relations in genus 2, integrable systems and
  algebraic geometry (kobe/kyoto, 1997), 73--106, world sci.
\newblock {\em Publishing, River Edge, NJ}, 1998.

\bibitem{givental2001semisimple}
Alexander Givental.
\newblock Semisimple frobenius structures at higher genus.
\newblock {\em International mathematics research notices},
  2001(23):1265--1286, 2001.

\bibitem{graber2003constructions}
Thomas Graber and Rahul Pandharipande.
\newblock Constructions of nontautological classes of moduli spaces of curves.
\newblock {\em Michigan Mathematical Journal}, 51(1):93--110, 2003.

\bibitem{graber2005relative}
Tom Graber and Ravi Vakil.
\newblock Relative virtual localization and vanishing of tautological classes
  on moduli spaces of curves.
\newblock {\em Duke Mathematical Journal}, 130(1):1--37, 2005.

\bibitem{iglesias2022bi}
Francisco~Hern{\'a}ndez Iglesias and Sergey Shadrin.
\newblock Bi-hamiltonian recursion, liu--pandharipande relations, and vanishing
  terms of the second dubrovin--zhang bracket.
\newblock {\em Communications in Mathematical Physics}, 392(1):55--87, 2022.

\bibitem{ionel2002topological}
Eleny-Nicoleta Ionel.
\newblock Topological recursive relations in $h^{2g}(\mathcal{M}_{g,n})$.
\newblock {\em Inventiones mathematicae}, 148(3):627--658, 2002.

\bibitem{janda2017double}
Felix Janda, Rahul Pandharipande, Aaron Pixton, and Dimitri Zvonkine.
\newblock Double ramification cycles on the moduli spaces of curves.
\newblock {\em Publications math{\'e}matiques de l'IH{\'E}S}, 125(1):221--266,
  2017.

\bibitem{kimura2006genus}
Takashi Kimura and Xiaobo Liu.
\newblock A genus-3 topological recursion relation.
\newblock {\em Communications in mathematical physics}, 262(3):645--661, 2006.

\bibitem{kimura2015topological}
Takashi Kimura and Xiaobo Liu.
\newblock Topological recursion relations on $\overline{M}_{3,2}$.
\newblock {\em Science China Mathematics}, 58(9):1909--1922, 2015.

\bibitem{li1998virtual}
Jun Li and Gang Tian.
\newblock Virtual moduli cycles and gromov-witten invariants of algebraic
  varieties.
\newblock {\em Journal of the American Mathematical Society}, 11(1):119--174,
  1998.

\bibitem{liu2009proof}
Kefeng Liu and Hao Xu.
\newblock A proof of the {F}aber intersection number conjecture.
\newblock {\em J. Differential Geom.}, 83(2):313--335, 2009.

\bibitem{liu-private}
Xiaobo Liu.
\newblock Private communication, 2019.

\bibitem{liu2002quantum}
Xiaobo Liu.
\newblock Quantum product on the big phase space and the {V}irasoro conjecture.
\newblock {\em Adv. Math.}, 169(2):313--375, 2002.

\bibitem{liu2006gromov}
Xiaobo Liu.
\newblock Gromov-{W}itten invariants and moduli spaces of curves.
\newblock In {\em International {C}ongress of {M}athematicians. {V}ol. {II}},
  pages 791--812. Eur. Math. Soc., Z\"{u}rich, 2006.

\bibitem{liu2011new}
Xiaobo Liu and Rahul Pandharipande.
\newblock New topological recursion relations.
\newblock {\em Journal of Algebraic Geometry}, 20(3):479--494, 2011.

\bibitem{pandharipande2015relations}
Rahul Pandharipande, Aaron Pixton, and Dimitri Zvonkine.
\newblock Relations on $\overline{M}_{g,n}$ via 3-spin structures.
\newblock {\em Journal of the American Mathematical Society}, 28(1):279--309,
  2015.

\bibitem{Pixton2024}
Aaron Pixton.
\newblock {DR} cycle polynomiality and related results.
\newblock Available at \url{
  https://public.websites.umich.edu/~pixton/papers/DRpoly.pdf}, 2024.

\bibitem{spelier2024polynomiality}
Pim Spelier.
\newblock Polynomiality of the double ramification cycle.
\newblock {\em arXiv preprint arXiv:2401.17421}, 2024.

\bibitem{wang2020genus}
Xin Wang.
\newblock A genus-4 topological recursion relation for {G}romov-{W}itten
  invariants.
\newblock {\em Science China Mathematics}, 63(1):101--112, 2020.

\bibitem{witten1990two}
Edward Witten.
\newblock Two-dimensional gravity and intersection theory on moduli space.
\newblock {\em Surveys in differential geometry}, 1(1):243--310, 1990.

\end{thebibliography}

\end{document}